\documentclass[11pt,reqno]{amsart}
\usepackage[margin=1in]{geometry}
\usepackage{algorithm,algpseudocode}
\usepackage[fleqn,tbtags]{mathtools}
\usepackage{pifont}
\usepackage[shortlabels]{enumitem}
\usepackage{verbatim}
\usepackage{tikz}
\usepackage{filecontents}
\usepackage{placeins}
\usepackage{mlmodern}
\usepackage{extarrows}
\usepackage{graphicx}
\usepackage{calc}
\usepackage{adjustbox}
\usepackage{amssymb}
\usepackage{tabu}
\usepackage{cases}
\usepackage{subcaption}
\usepackage{pifont} 

\setlist[description]{font=\normalfont\itshape,labelindent=2em}

\usepackage{hyperref,xcolor}
\hypersetup{
    colorlinks,
    linkcolor={red!50!black},
    citecolor={blue!50!black},
    urlcolor={blue!80!black}
}

\let\oh=\circ
\newcommand{\ccirc}{\mathbin{\mathchoice
  {\xcirc\scriptstyle}
  {\xcirc\scriptstyle}
  {\xcirc\scriptscriptstyle}
  {\xcirc\scriptscriptstyle}
}}
\newcommand{\xcirc}[1]{\vcenter{\hbox{$#1\oh$}}}
\let\circ\ccirc

\newcommand{\tp}{{\scriptscriptstyle\mathsf{T}}}

\newcommand{\st}{\sigma}
\newcommand{\F}{{\scriptscriptstyle\mathsf{F}}}
\newcommand{\Par}{{\scriptscriptstyle\mathsf{P}}}

\DeclareMathOperator{\rank}{rank}
\DeclareMathOperator{\tr}{tr}

\DeclareMathOperator{\diag}{diag}

\DeclareMathOperator*{\argmin}{argmin}
\DeclareMathOperator{\conv}{conv}
\DeclareMathOperator{\vecb}{vecb}
\DeclareMathOperator{\vect}{vec}

\theoremstyle{definition}
\newtheorem{theorem}{Theorem}[section]

\newtheorem{lemma}[theorem]{Lemma}
\newtheorem{corollary}[theorem]{Corollary}
\newtheorem{proposition}[theorem]{Proposition}
\newtheorem{example}[theorem]{Example}

\numberwithin{equation}{section}

\newcommand{\C}{\mathbb{C}}
\newcommand{\R}{\mathbb{R}}

\begin{document}
\title{Generalized matrix nearness problems II}
\author{Rongbiao Thomas Wang}
\address{Computational and Applied Mathematics, University of Chicago, Chicago, IL 60637}
\email{rbwang@uchicago.edu}

\author{Chi-Kwong Li}
\address{Department of Mathematics, College of William and Mary, Williamsburg, Virginia 23187}
\email{ckli@math.wm.edu}

\author{Lek-Heng Lim}
\address{Computational and Applied Mathematics, University of Chicago, Chicago, IL 60637}
\email{lekheng@uchicago.edu}

\begin{abstract}
Given a matrix $A$, a matrix nearness problem seeks an $X$ that most closely approximates $A$ in the sense of minimizing  $\lVert A - X\rVert$ under a variety of constraints on $X$. A generalized matrix nearness problem seeks the same but with three given matrices $A,B,C$ and $\lVert A - BXC\rVert$ in place of $\lVert A - X\rVert$. We extend previous studies of the latter problem in three directions: incorporating an affine term, replacing matrix product by Kronecker product in various manners, and generalizing Frobenius norm to any orthogonally invariant norm. We will solve several of these in closed form. For the rest, we develop an iterative algorithm that works for any Schatten norm, proving that it converges to a global minimizer regardless of the initial point. In addition, the algorithm relies purely on numerical linear algebra, and notably does not compute any explicit gradients or subgradients. Along the way, we will also show that there is no Mirsky-type theorem for rank constrained generalized matrix nearness problems.
\end{abstract}

\maketitle

\section{Introduction}\label{sec:intro}

This article is intended to be a sequel to \cite{li22}, in which the authors addressed the \emph{generalized matrix nearness problem}:
\begin{equation}\label{eq:near}
    \min_{X \in \mathcal{S}} \; \lVert A- BXC\rVert_\F ,
\end{equation}
for given $A \in \R^{m \times n}$, $B \in \R^{m \times p}$, $C \in \R^{q \times n}$, and $\lVert \, \cdot \, \rVert_\F $ the Frobenius norm. The problem was solved in \cite{li22} for a variety of common feasible sets $\mathcal{S}$, defined by:
\begin{dingautolist}{192}
    \item rank constraint: $\rank(X) \le r$ for a given integer $r>0$; \label{it:rank}
    \item norm constraint: $\lVert X \rVert_\F \le \rho$ for a given $\rho > 0$; \label{it:norm}
    \item product constraint: $FXG=H$ for given matrices $F,G,H$; \label{it:prod}
    \item spectral constraints: $X$ has a prescribed eigenvalue or eigenvector; \label{it:eig}
    \item symmetry constraints: $X$ is symmetric or skew-symmetric; \label{it:sym}
    \item structure constraints: $X$ is Toeplitz, Hankel, or circulant; \label{it:spec}
    \item positivity constraints: $X$ is positive semidefinite, correlation, nonnegative, stochastic, or doubly stochastic. \label{it:pos}
\end{dingautolist}
The original motivation was \ref{it:rank}, which first appeared in \cite{Sondermann}, was rediscovered in \cite{Fri}, and finally given an essentially one-line solution in \cite{li22}.

While one may simply treat \eqref{eq:near} as a constrained optimization problem and seek a solution by applying general-purpose optimization algorithms, the methods in \cite{li22} are notable in that they solve an ostensible optimization problem \eqref{eq:near} without computing a single gradient. Solutions are either given as closed-form expressions or as the results of a specialized iterative algorithm that is ``zeroth order'' in optimization parlance, i.e., no derivatives involved. This algorithm \cite[Algorithm~5]{li22} is provably convergent (i.e., yields the global minimizer  $X_*$), globally convergent (i.e., regardless of the choice of initial point $X_0$), and linearly convergent (i.e., error $\lVert X_k - X_* \rVert_\F$ decreases exponentially from one step to the next).

In this article, we extend the problem \eqref{eq:near} in three directions:
\begin{description}
\item[affine] adding an affine term $bx^\tp C$ or $Bx c^\tp$ where $B,C$ are matrices and $b,c$ are vectors;

\item[Kronecker] replacing matrix product by Kronecker product $\otimes$, for example, $BXC$ by $B \otimes X \otimes C$, matrix rank by Kronecker rank, eigenpairs $Xw = \lambda w$ by partial trace $\tr_\Par(X(W \otimes I)) = \lambda W$;

\item[Schatten] generalizing Frobenius norm $\lVert \, \cdot \, \rVert_\F$ to an arbitrary Schatten norm.
\end{description}
The solution methods in this article are of the same nature as those in \cite{li22}  --- either a closed-form solution or an iterative solution from a provably and globally convergent algorithm. Regretfully, we are unable to maintain linear convergence rate for these more complex variants of \eqref{eq:near}; our proposed iterative algorithm could only be guaranteed to have sublinear convergence in most cases.

The best Kronecker rank-$r$ approximation problem deserves some historical commentary. Let $m = m_1 m_2$, $n = n_1n_2$. The Kronecker rank of $X \in  \mathbb{R}^{m \times n}$ is defined by
\[
\rank_\otimes(X) = \min \{ r : X = Y_1 \otimes Z_1 + \dots + Y_r \otimes Z_r\},
\]
where the minimum is taken over $Y_i \in \R^{m_1 \times n_1}$, $Z_i \in \R^{m_2 \times n_2}$, $i =1,\dots,r$. This reduces to standard matrix rank when $m_1 = m$, $n_2 = n$ (thus $n_1 = 1 = m_2$); and so the best Kronecker rank-$r$ approximation problem $\min_{\rank_\otimes(X) \le r} \lVert A- X\rVert_\F $ contains the usual  best rank-$r$ approximation problem as a special case. What is a bit surprising is that the Eckart--Young solution of the latter \cite{Eck} via singular value decomposition extends nearly verbatim to the former. This fact has been rediscovered several times, for example, in \cite{allen,kon, vanLoan} --- the authors did not appear to be aware of earlier efforts. We will extend their works to \eqref{eq:near}, providing closed form solutions when $BXC$ is replaced by $B \otimes X \otimes C$ or $(B_1 \otimes B_2)X(C_1 \otimes C_2)$; beyond these cases, our iterative algorithm would yield the required result to machine precision.

We will elaborate on these and other new contributions below.

\subsection{Closed-form solutions}\label{sec:closed}

New closed-form solutions for any matrix approximation problems are hard to come by, given that the topic has been thoroughly explored over the past ninety years since the appearance of \cite{Eck}. Nevertheless we found four in the course of this work. In Section~\ref{sec:closed-form}, we will present these four variants of \eqref{eq:near} that admit closed-form solutions.
\begin{enumerate}[\normalfont(i)]
\item Affine generalized matrix nearness:\label{it:aff}
\[
\min_{X \in \mathcal{S},\, x\in \R^n} \lVert A- B X C + dx^\tp E \rVert_\F \qquad \text{or} \qquad  \min_{X \in \mathcal{S},\, x\in \R^m} \lVert A- B X C + D xe^\tp \rVert_\F,
\]
for any $A \in \R^{m \times n}$, $B \in \R^{m \times p}$, $C \in \R^{p \times n}$, nonsingular $D \in \R^{m \times m}$, nonsingular $E \in \R^{n \times n}$,  $d \in \R^m$, $e \in \R^n$, and any $\mathcal{S} \subseteq \R^{p \times p}$ for which \eqref{eq:near} has a closed-form solution.

\item Separable Kronecker product constraint:\label{it:sep}
\[
\min_{X \in \mathcal{S}} \; \lVert A- B \otimes X \otimes C \rVert_\F,
\]
for any $A \in \R^{m_1 p m_2 \times n_1 q n_2}$, $B \in \R^{m_1 \times n_1}$, $C \in \R^{m_2 \times  n_2}$, and any $\mathcal{S} \subseteq \R^{p \times q}$  for which \eqref{eq:near} has a closed-form solution.

\item Generalized Kronecker rank constraint:\label{it:kron}
\[
\min_{\rank_\otimes(X) \le r} \lVert  A- (B_1 \otimes B_2)X(C_1 \otimes C_2) \rVert_\F,
\]
for any $A \in \R^{m_1m_2 \times n_1n_2}$, $B_1 \in \R^{m_1 \times p_1}$, $B_2 \in \R^{m_2 \times p_2}$, $C_1 \in \R^{q_1 \times n_1}$, $C_2 \in \R^{q_2 \times n_2}$.

\item Prescribed partial trace Kronecker product constraint:\label{it:ptr}
\[
\min_{ \tr_\Par(X(W \otimes I)) = \lambda W } \lVert  A- (B_1 \otimes B_2)X(C_1 \otimes C_2)  \rVert_\F,
\]
for any $A \in \R^{m_1m_2\times n_1n_2}$, $B_1 \in \R^{m_1 \times p}$, $B_2 \in \R^{m_2 \times p}$, $C_1 \in \R^{p \times n_1}$, $C_2 \in \R^{p \times n_2}$, and $\lambda \in \R$. This an analog of the prescribed eigenvalue problem in \ref{it:eig}, where $W$ plays the role of a $\lambda$-eigenvector. The definition of partial trace will be given in Section~\ref{sec:partial-trace}.
\end{enumerate}

\subsection{Nonexistence of Mirsky-type result}\label{sec:imposs}

The closed-form solutions in \cite{Sondermann, Fri, li22} to \ref{it:rank} require the Frobenius norm. One might wonder if they extend to other orthogonally invariant norms $\lVert \, \cdot \, \rVert$, i.e.,  $\lVert UXV \rVert = \lVert X \rVert$ whenever $U$ and $V$ are orthogonal matrices. The underlying motivation is Mirsky's celebrated result \cite{mir}, i.e., the Eckart--Young solution \cite{Eck} for the  best rank-$r$ approximation problem $\min_{\rank(X)\le r} \lVert A- X\rVert$  holds universally across all orthogonally invariant norms $\lVert \, \cdot \, \rVert$.  We show that no Mirsky-type result could possibly hold for \eqref{eq:near} when $B$ and $C$ are not identity matrices.  In fact, we will show in Section~\ref{sec:norm} that the rank constrained problem
\[
    \min_{\rank(X) \le r} \lVert A- BXC\rVert
\]
has different minimizers for different orthogonally invariant norms.

\subsection{Iterative algorithm}\label{sec:algo}

For orthogonally invariant norms other than the Frobenius norm, we neither have nor expect closed form solutions for any of the aforementioned generalized matrix nearness problems; so we will consider iterative solutions. We have to exclude \ref{it:sep} in Section~\ref{sec:closed} below as it involves an objective function of a fundamentally different nature and requires separate treatment.

Let $\mathcal{S}$ be a feasible set defined by any of the constraints \ref{it:rank}--\ref{it:pos} on p.~\pageref{it:rank}; or \ref{it:aff}, \ref{it:kron}, or \ref{it:ptr} in Section~\ref{sec:closed}; or a mixture of these. We will propose an iterative algorithm (Algorithm~\ref{alg:general}) for solving
\begin{equation}\label{eq:f}
\min_{X \in \mathcal{S}} \; f(A-BXC)
\end{equation}
under mild assumptions on the loss function  $f: \R^{m\times n} \to \R$. In principle, it makes sense to consider $f$ of increasing specificity:
\begin{enumerate}[\upshape (a)]
\item $f$ is a subdifferentiable function that attains a minimum on $\{A - BXC : X \in \mathcal{S}\}$; \label{it:sub}
\item $f$ is a convex function; \label{it:cvx}
\item $f$ is an orthogonally invariant convex function; \label{it:inv}
\item $f$ is an orthogonally invariant norm  $\lVert \, \cdot \, \rVert$; \label{it:nor}
\item $f$ is a Schatten norm $\lVert \, \cdot \, \rVert_{\st,p}$ for some $p \in [1,\infty]$; \label{it:sch}
\item $f$ is the Frobenius norm  $\lVert \, \cdot \, \rVert_\F = \lVert \, \cdot \, \rVert_{\st,2}$. \label{it:fro}
\end{enumerate}
Indeed, our convergence result --- that the iterates of Algorithm~\ref{alg:general} converge from any initial point to a global minimizer of \eqref{eq:f} at a sublinear rate --- applies to the most general case \ref{it:sub} and thus to all other cases. 

Nevertheless, the imposition of progressively stricter assumptions \ref{it:cvx}, \ref{it:inv}, \ref{it:nor} does not allow us to say more until we get to \ref{it:sch}, i.e., when $f$ is a Schatten norm as defined in \eqref{eq:sch}. In this case, we are able to obtain explicit closed-form solutions to the two main update steps in Algorithm~\ref{alg:general}, simplifying it to Algorithm~\ref{alg:Schatten}.  We stress that even though we do not have separate results and algorithms for cases \ref{it:cvx}, \ref{it:inv}, \ref{it:nor}, these play a role in our derivation of Algorithm~\ref{alg:Schatten} from Algorithm~\ref{alg:general}.

In addition, the base case \ref{it:fro}, which is just \eqref{eq:near}, is important for a different reason: the first update step in Algorithm~\ref{alg:general} essentially reduces the problem \eqref{eq:f} to \eqref{eq:near}. So Algorithm~\ref{alg:general} effectively draws on everything developed earlier in \cite{li22}.

These algorithms and their convergence proofs will be given in Section~\ref{sec:iterative}. We will also present a range of numerical experiments arising from real-world applications in Section~\ref{sec:experiments} to illustrate Algorithm~\ref{alg:Schatten}.

\section{Conventions and notations}

We write all vectors in $\R^n$ as column vectors, i.e., $\R^n \equiv \R^{n \times 1}$. When  $x_1, \dots, x_n \in \R$ are enclosed by parentheses, it denotes a column vector, i.e.,
\[
(x_1,\dots,x_n) \coloneqq \begin{bmatrix}
    x_1 \\
    \vdots \\
    x_n
\end{bmatrix}.
\]
When enclosed by square brackets, $[x_1, \dots, x_n] \in \R^{1 \times n}$ is a row vector. 

Throughout this article, when $ A \in \R^{m \times n}$, $B \in \R^{m \times p}$, and $C \in \R^{q \times n}$, we assume without loss of generality that $m\ge p$ and $n\ge q$, since otherwise we may simply add rows of zeros to $A$ and $B$ or columns of zeros to $A$ and $C$ to meet these requirements. We write  $E_{ij} \in \R^{m \times n}$ for the standard basis matrix with $1$ in its $(i,j)$th entry and $0$ elsewhere; when $n = 1$, we write $e_i \coloneqq E_{i1}$ for the standard basis vector. We use upper case letters for matrices and lower case letters for vectors and scalars.

Adhering to convention in numerical linear algebra, the vectorization operator  $\vect : \R^{m \times n} \to \R^{mn}$ is defined by
\[
\vect(X) \coloneqq (x_{11}, \dots, x_{m1}, x_{12}, \dots, x_{m2}, \dots, x_{1n}, \dots, x_{mn}), 
\]
i.e., with respect to the reverse lexicographic ordering on indices. As a result of this we have the somewhat awkward interplay with the Kronecker product:
\begin{equation}\label{eq:vec}
    \vect(BXC) = (C^\tp \otimes B)\vect(X)
\end{equation}
for any $B \in \R^{m \times p}$, $C \in \R^{q \times n}$, and $X \in \R^{p \times q}$. Note that if we had defined $\vect$ with respect to the lexicographic ordering, it would simply have been ``$\vect(BXC) = (B \otimes C)\vect(X)$.''

Let $X \in \R^{m \times n}$. For $i = 1,\dots,\min(m,n)$, we write $\sigma_i(X)$ for the $i$th largest singular value of $X$. We view singular values as a \emph{function} $\sigma: \R^{m \times n} \to \R^{\min(m,n)}$,
\begin{equation}\label{eq:sing}
\sigma(X) \coloneqq \bigl(\sigma_1(X),\dots , \sigma_{\min(m,n)}(X)\bigr).
\end{equation}
Let $X \in \R^{n \times n}$. We view eigenvalues as a \emph{set} $\lambda(X) \subseteq \C^n$. We write $\lambda_{\max}(X)$ and $\lambda_{\min}(X)$ for the largest and smallest eigenvalues by magnitude.
 
For $p \in [1,\infty]$, the vector $p$-norm is denoted by $\lVert\,\cdot\,\rVert_{p}$ and the Schatten $p$-norm is defined by
\begin{equation}\label{eq:sch}
    \lVert X \rVert_{\st,p} \coloneqq \Bigl(\sum\nolimits_{i=1}^{\min(m,n)}\sigma_i(X)^p \Bigr)^{\frac{1}{p}} = \lVert \sigma(X) \rVert_p.
\end{equation}
In particular, $\lVert\,\cdot\,\rVert_{\st,1}$ is the nuclear norm, $\lVert\,\cdot\,\rVert_{\st,\infty}$ is the spectral norm, and $\lVert\,\cdot\,\rVert_{\st,2} = \lVert\,\cdot\,\rVert_\F$ is the Frobenius norm. The letter $p$ is used both as matrix dimensions and as norm powers, but never in close proximity.

Given a finite-dimensional vector space $\mathbb{V}$ with norm $\lVert\,\cdot\,\rVert$, we denote its dual space and dual norm by
\[
\mathbb{V}^* \coloneqq \{\varphi : \mathbb{V} \to \R : \varphi  \text{ is linear}\}, \qquad \lVert \varphi  \rVert^* \coloneqq \sup_{\lVert x \rVert = 1} \varphi (x).
\]

A function $f: \R^{m \times n} \to \R$ is called orthogonally invariant if it satisfies $f(UXV) = f(X)$ for all $X \in \R^{m \times n}$, and orthogonal matrices $U \in \R^{m \times m}$ and $V \in \R^{n \times n}$. For example, rank, singular values, Schatten norms are all orthogonally invariant functions. 

The subdifferential \cite{Clarke} of a function $f:\R^{m \times n} \to \R \cup \{\infty\}$ at $X \in \R^{m \times n}$ is denoted by
\[
\partial f(X) \coloneqq \bigl\{G\in \R^{m \times n} : f(Y)-f(X) \ge \tr\bigl(G^\tp (Y-X) \bigr) \text{ for all } Y \in \R^{m \times n}\bigr\}.
\]
For those unfamiliar with this notion, note that subdifferentials are set-valued; an element of $\partial f(X)$ is called a subgradient.

\section{Closed-form solutions}\label{sec:closed-form}

We begin with the four problems with closed form solutions described in Section~\ref{sec:closed}. Our iterative algorithms in Section~\ref{sec:iterative} would, on occasions, call these as subroutines.

\subsection{Affine generalized matrix nearness}\label{sec:affine}

We first address a simple case for later use. The closed form solution to the least squares problem $\min_{x \in \R^n} \lVert A- bx^\tp \rVert_\F$ is standard, commonly required in mean centering \cite{Marden1995}. It is straightforward to extend this to two-sided variants:
\begin{equation}\label{eq:simple}
\min_{x \in \R^p} \; \lVert A- bx^\tp C \rVert_\F \qquad \text{or} \qquad \min_{x \in \R^p} \; \lVert A- Bx c^\tp \rVert_\F,
\end{equation}
where $A \in \R^{m \times n}$, $B \in \R^{m \times p}$, $C \in \R^{p\times n}$, $b \in \R^m$, $c \in \R^n$. By taking transpose, the two problems are easily seen to be identical and we will use the one on the right. Although a special case of \eqref{eq:near} and follows from the closed-form solution in \cite[Equation~2.1]{li22}, this simple case has a normal equation type closed form solution that avoids singular value decomposition:
\begin{lemma}\label{lem:rank-1}
Let $A \in \R^{m \times n}$, $B \in \R^{m \times p}$, and $0 \ne c \in \R^n$. Then
\[
x_* = \frac{1}{ c^\tp c } B^\dagger A c
\]
gives a solution to \eqref{eq:simple}.
\end{lemma}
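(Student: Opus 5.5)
We minimize $g(x) = \lVert A - Bxc^\tp \rVert_\F^2$ over $x \in \R^p$. The idea is to project $A$ onto the one-dimensional column direction spanned by $c$, which turns the problem into an ordinary least squares problem solved by the pseudoinverse.

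First, write $P = cc^\tp/(c^\tp c)$, the orthogonal projector onto $\operatorname{span}\{c\}$ in $\R^n$. Every matrix $Bxc^\tp$ satisfies $Bxc^\tp P = Bxc^\tp$ and $Bxc^\tp (I-P) = 0$. Splitting $A = AP + A(I-P)$ and using that right multiplication by $P$ and by $I-P$ gives Frobenius-orthogonal pieces, we get the Pythagorean identity
\[
\lVert A - Bxc^\tp\rVert_\F^2 = \lVert AP - Bxc^\tp\rVert_\F^2 + \lVert A(I-P)\rVert_\F^2 .
\]
The second term does not depend on $x$, so only the first term matters.

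Next, $AP - Bxc^\tp = \bigl(\tfrac{Ac}{c^\tp c} - Bx\bigr)c^\tp$, and for vectors $u, c$ we have $\lVert uc^\tp\rVert_\F = \lVert u\rVert_2 \lVert c\rVert_2$. Hence
\[
\lVert AP - Bxc^\tp\rVert_\F^2 = (c^\tp c)\,\Bigl\lVert \frac{Ac}{c^\tp c} - Bx\Bigr\rVert_2^2 .
\]
Since $c \ne 0$, the problem reduces to the vector least squares problem $\min_x \lVert y - Bx\rVert_2$ with $y = Ac/(c^\tp c)$.

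By the standard Moore--Penrose property, $x = B^\dagger y$ is a minimizer (indeed the minimum-norm one), since $BB^\dagger$ is the orthogonal projector onto the range of $B$. This gives $x_* = \frac{1}{c^\tp c} B^\dagger A c$, as claimed.

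The only step that needs care is the orthogonality of the split, i.e. checking that the cross term $\tr\bigl((AP - Bxc^\tp)^\tp A(I-P)\bigr)$ vanishes. It does, because $P(I-P) = 0$ and $P$ is symmetric. As an alternative check, one could set the gradient $-2B^\tp(A - Bxc^\tp)c$ to zero, which gives the normal equation $B^\tp B x\,(c^\tp c) = B^\tp A c$, and $B^\dagger A c/(c^\tp c)$ solves it.
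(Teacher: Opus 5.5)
Your proof is correct, but your main argument takes a different route from the paper. The paper expands $\lVert A - Bxc^\tp\rVert_\F^2$ into the quadratic $\lVert c\rVert_2^2\, x^\tp B^\tp B x - 2x^\tp B^\tp A c$ plus a constant. It then sets the gradient to zero and reads $x_*$ off the normal equation $\lVert c\rVert_2^2 B^\tp B x = B^\tp A c$; this is essentially what you relegate to your ``alternative check.''

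You instead split the residual orthogonally along the projector $P = cc^\tp/(c^\tp c)$. This reduces the problem to the vector least squares problem $\min_x \lVert Ac/(c^\tp c) - Bx\rVert_2$, which you solve with the least squares property of $B^\dagger$.

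The paper's computation is shorter, but it leaves two steps implicit. First, a stationary point of this convex quadratic is a global minimizer. Second, $B^\dagger Ac/(c^\tp c)$ actually solves the possibly singular normal equation, which uses $B^\tp B B^\dagger = B^\tp$.

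Your route gets global optimality directly from Pythagoras and uses no derivatives. It also yields more:
\begin{itemize}
\item the optimal value $\lVert A(I-P)\rVert_\F^2 + \lVert (I - BB^\dagger)Ac\rVert_2^2/(c^\tp c)$;
\item the fact that $x_*$ is the minimum-norm minimizer;
\item an explanation of where the projector $I - ee^\tp/\lVert e\rVert_2^2$ in Proposition~\ref{prop:affine} comes from.
\end{itemize}
For the last point, apply your split to $M = A - BXC$ with nonsingular $D$. It shows at once that the residual after optimizing out $x$ is $\lVert M(I - ee^\tp/\lVert e\rVert_2^2)\rVert_\F$, which is exactly the objective in \eqref{eq:Xstar}.
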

\begin{proof}
Since  $\lVert A- Bxc^\tp \rVert_\F^2 = \lVert A \rVert_\F^2 - 2\tr(A^\tp Bxc^\tp) + \lVert Bxc^\tp \rVert_\F^2$, it is equivalent to minimizing 
\[
    f(x) = - 2\tr(A^\tp Bxc^\tp) + \lVert Bxc^\tp \rVert_\F^2 = \lVert c\rVert_{2}^{2}x^\tp B^\tp Bx -2x^\tp B^\tp Ac
\]
Taking gradient  $\nabla f(x) = 2\lVert c \rVert_2^2 B^\tp B x - 2 B^\tp Ac= 0$ gives the required solution.
\end{proof}

Let $A \in \R^{m \times n}$, $B \in \R^{m \times p}$, and $c \in \R^m$. The affine orthogonal Procrustes problem
\[
\min_{X^\tp X = I,\, x\in \R^n} \lVert A- B X + cx^\tp \rVert_\F
\]
is well-known and closely related to the notion of centering matrices in statistics \cite{Marden1995}. The affine generalized matrix nearness problem of this section may be regarded as a generalization to arbitrary constraints beyond orthogonality and allowing for extra matrix multipliers:
\begin{equation}\label{eq:affine}
\min_{X \in \mathcal{S},\, x\in \R^n} \lVert A- B X C + dx^\tp E \rVert_\F \qquad\text{or}\qquad
\min_{X \in \mathcal{S},\, x\in \R^m} \lVert A- B X C + D x e^\tp \rVert_\F.
\end{equation}
Here  $C \in \R^{q \times n}$, $e \in \R^n$, and we assume that $E \in \R^{n \times n}$ and $D \in \R^{m \times m}$ are nonsingular.
Since the Frobenius norm is invariant under taking transpose, the two problems in \eqref{eq:affine} are really one and the same. We will work with the version on the right. The following shows how such an affine generalized matrix nearness problem can be transformed into a (standard) generalized matrix nearness problem.
\begin{proposition}\label{prop:affine}
Let $A \in \R^{m \times n}$, $B \in \R^{m \times p}$, $C \in \R^{q \times n}$, nonsingular $D \in \R^{m \times m}$,  $0 \ne e \in \R^{n}$, and $\mathcal{S} \subseteq \R^{p \times q}$ be a constraint set. If
\begin{align}
X_* &\in \argmin_{X \in \mathcal{S}} \; \biggl\lVert A \biggl(I - \frac{ee^\tp}{\lVert e \rVert_2^2} \biggr) - BXC\biggl(I - \frac{ee^\tp}{\lVert e \rVert_2^2} \biggr) \biggr\rVert_\F, \label{eq:Xstar} \\
x_* &= \frac{1}{e^\tp e} D^\dagger (BX_*C - A) e, \notag
\end{align}
then $(X_*, x_*)$ is a solution to \eqref{eq:affine}.
\end{proposition}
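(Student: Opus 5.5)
The plan is to eliminate the vector variable $x$ first, and then reduce the remaining problem in $X$ to \eqref{eq:near}. Put $P \coloneqq ee^\tp/\lVert e\rVert_2^2$, the orthogonal projector onto $\operatorname{span}\{e\}$, and $P^\perp \coloneqq I-P$. For any matrix $M\in\R^{m\times n}$ we have $M = MP + MP^\perp$. The rows of $MP$ lie in $\operatorname{span}\{e^\tp\}$ and the rows of $MP^\perp$ are orthogonal to $e$, so $\tr\bigl((MP)^\tp MP^\perp\bigr) = \tr(P M^\tp M P^\perp)=\tr(M^\tp M P^\perp P)=0$. This gives the Pythagorean splitting
\[
\lVert M\rVert_\F^2 = \lVert MP\rVert_\F^2 + \lVert MP^\perp\rVert_\F^2 .
\]

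Apply this with $M = A - BXC + Dxe^\tp$. Since $e^\tp P^\perp = 0$ and $e^\tp P = e^\tp$, the objective squared becomes
\[
\bigl\lVert (A-BXC)P^\perp\bigr\rVert_\F^2 + \bigl\lVert (A-BXC)P + Dxe^\tp\bigr\rVert_\F^2 .
\]
The first term does not involve $x$. The second term equals $\lVert (A-BXC)e/\lVert e\rVert_2 + Dx\,\lVert e\rVert_2\rVert_2^2$, because $(A-BXC)P + Dxe^\tp = \bigl((A-BXC)e/\lVert e\rVert_2^2 + Dx\bigr)e^\tp$ and $\lVert ve^\tp\rVert_\F=\lVert v\rVert_2\lVert e\rVert_2$. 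Since $D$ is nonsingular, for every fixed $X$ this term can be driven to $0$ by choosing
\[
x = \frac{1}{e^\tp e}D^{-1}(BXC-A)e .
\]
Because $D^\dagger=D^{-1}$, this is exactly the stated formula for $x_*$ when $X=X_*$. Equivalently, one can invoke Lemma~\ref{lem:rank-1} with $A$ replaced by $BXC-A$, $B$ by $D$, and $c$ by $e$. The nonsingularity of $D$ ensures the residual term vanishes and is not merely minimized.

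It follows that for every feasible pair $(X,x)$,
\[
\lVert A-BXC+Dxe^\tp\rVert_\F \ge \lVert (A-BXC)P^\perp\rVert_\F \ge \lVert (A-BX_*C)P^\perp\rVert_\F ,
\]
where the second inequality is the definition \eqref{eq:Xstar} of $X_*$. The pair $(X_*,x_*)$ attains both bounds with equality, because the $x$-term is $0$ for $x_*$. Hence $(X_*,x_*)$ is a solution to \eqref{eq:affine}.

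I expect the only delicate point to be the cross-term cancellation. One has to be careful to project on the right, since the rank-one term $Dxe^\tp$ lives in the row space spanned by $e^\tp$, and to use nonsingularity of $D$ so that the $e$-component can be eliminated exactly. Without nonsingularity, a residual $(I-DD^\dagger)(A-BXC)e$ would couple back into the choice of $X$, and the decoupling into \eqref{eq:Xstar} would fail.
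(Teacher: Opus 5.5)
Your proof is correct and follows essentially the same route as the paper. Both arguments minimize out $x$ for each fixed $X$, then minimize the remaining projected residual $(A-BXC)(I - ee^\tp/\lVert e\rVert_2^2)$ over $\mathcal{S}$. The paper does the partial minimization by citing Lemma~\ref{lem:rank-1} and using $DD^\dagger = I$. You instead use the orthogonal splitting $\lVert M\rVert_\F^2 = \lVert MP\rVert_\F^2 + \lVert MP^\perp\rVert_\F^2$, which makes that step explicit and self-contained, and your closing inequality chain spells out the final comparison that the paper leaves implicit.
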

\begin{proof}
By Lemma~\ref{lem:rank-1}, for each $X \in \mathcal{S}$, 
\[
    \frac{1}{\lVert e\rVert_2^2} D^\dagger (BX_*C - A) e \in \argmin_{x\in \R^m} \; \lVert A- B X C + Dx e^\tp \rVert_\F.\]
Therefore, 
\begin{align*}
    \min_{X \in \mathcal{S},\, x\in \R^m} \lVert A- B X C + Dxe^\tp \rVert_\F &= \min_{X \in \mathcal{S}} \; \biggl\lVert A - BXC -  DD^\dagger(A- B X C)\frac{ee^\tp}{\lVert e \rVert_2^2} \biggr\rVert_\F\\
    &= \min_{X \in \mathcal{S}} \; \biggl\lVert A \biggl(I - \frac{ee^\tp}{\lVert e \rVert_2^2} \biggr) - BXC\biggl(I - \frac{ee^\tp}{\lVert e \rVert_2^2} \biggr) \biggr\rVert_\F. \qedhere
\end{align*}
\end{proof}
Note that \eqref{eq:Xstar} takes the form of a generalized matrix nearness problem in \eqref{eq:near}. For the constraints~\ref{it:rank}--\ref{it:pos}, its solution $X_*$ have been found in \cite{li22}.

\subsection{Nearest separable Kronecker product}\label{sec:Kronecker2}

One might wonder what happens when we replace the matrix product in \eqref{eq:near} with Kronecker product, i.e.,
\begin{equation}\label{eq:Kronecker}
    \min_{X \in \mathcal{S}} \; \lVert A- B \otimes X \otimes C\rVert_\F
\end{equation}
for given $A \in \R^{m_1pm_2 \times n_1qn_2}$, $B \in \R^{m_1 \times n_1}$, and $C \in \R^{m_2 \times n_2}$. We show that it may be transformed into a matrix nearness problem.

Let $\mathcal{R}:\R^{m_1m_2p \times n_1n_2q} \to \R^{m_1n_1m_2n_2 \times pq}$ be the unique linear operator satisfying 
\[
\mathcal{R}(Y \otimes X \otimes Z) = \vect(Y \otimes Z) \vect(X)^\tp
\]
for all $Y \in \R^{m_1 \times n_1}$, $X \in \R^{p \times q}$, and $Z \in \R^{m_2 \times n_2}$.  Linear operators of this form are called rearrangement operators (see Section~\ref{sec:Kronecker}), convenient in problems involving Kronecker products. They entail nothing more than a rearrangement of the indices of matrix entries and are therefore invertible and isometries in the Frobenius norm.
\begin{proposition}
Let $A \in \R^{m_1pm_2 \times n_1qn_2}$, $B \in \R^{m_1 \times n_1}$, $C \in \R^{m_2 \times n_2}$, and $\mathcal{S} \subseteq \R^{p \times q}$. If $H \in \R^{p \times q}$ is the unique matrix with
\[
\vect(H) = \frac{1}{\lVert B \rVert_\F^2 \lVert C \rVert_\F^2} \mathcal{R}(A)^\tp \vect(B \otimes C),
\]
then any
\[
X_* \in \argmin_{X \in \mathcal{S}} \; \lVert X - H \rVert_\F
\]
is a solution to \eqref{eq:Kronecker}.
\end{proposition}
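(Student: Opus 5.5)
The plan is to use the rearrangement operator $\mathcal{R}$ to turn the Kronecker objective into a rank-one-type least squares problem, and then complete the square in $X$. Set $b \coloneqq \vect(B \otimes C) \in \R^{m_1n_1m_2n_2}$ and $\tilde A \coloneqq \mathcal{R}(A)$. By the defining property of $\mathcal{R}$, we have $\mathcal{R}(B \otimes X \otimes C) = b \vect(X)^\tp$. Moreover, $\mathcal{R}$ is a Frobenius isometry. Hence
\[
\lVert A - B \otimes X \otimes C \rVert_\F = \bigl\lVert \tilde A - b\, \vect(X)^\tp \bigr\rVert_\F .
\]
So \eqref{eq:Kronecker} is equivalent to minimizing $\lVert \tilde A - b x^\tp\rVert_\F$ over $x = \vect(X)$ with $X \in \mathcal{S}$.

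Next I will expand the square, exactly as in the proof of Lemma~\ref{lem:rank-1} (with $B$ there replaced by the identity and the roles of rows and columns transposed):
\[
\lVert \tilde A - b x^\tp \rVert_\F^2 = \lVert \tilde A\rVert_\F^2 - 2 x^\tp \tilde A^\tp b + \lVert b\rVert_2^2 \lVert x\rVert_2^2 .
\]
Write $h \coloneqq \tilde A^\tp b / \lVert b \rVert_2^2$, and assume $B, C \ne 0$ (otherwise the objective does not depend on $X$ and the statement is vacuous or degenerate). Completing the square gives
\[
\lVert \tilde A - b x^\tp \rVert_\F^2 = \lVert b\rVert_2^2\, \lVert x - h \rVert_2^2 + \bigl(\lVert \tilde A\rVert_\F^2 - \lVert b\rVert_2^2 \lVert h\rVert_2^2\bigr).
\]
The second term is a constant independent of $x$. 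Therefore minimizing over $X \in \mathcal{S}$ is the same as minimizing $\lVert \vect(X) - h\rVert_2 = \lVert X - H\rVert_\F$, where $\vect(H) = h$.

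Finally I will identify $\lVert b \rVert_2^2$. Since $\lVert b\rVert_2 = \lVert B\otimes C\rVert_\F = \lVert B\rVert_\F \lVert C\rVert_\F$, the vector $h$ matches the stated formula for $\vect(H)$. It follows that any $X_* \in \argmin_{X\in\mathcal{S}}\lVert X - H\rVert_\F$ solves \eqref{eq:Kronecker}.

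The only step needing care is justifying that $\mathcal{R}$ is well defined and a Frobenius isometry with $\mathcal{R}(B\otimes X\otimes C) = \vect(B\otimes C)\vect(X)^\tp$. I will do this by an index check: each entry of $Y\otimes X\otimes Z$ is $y_{i_1j_1}x_{kl}z_{i_2j_2}$ in a distinct position, and $\mathcal{R}$ merely permutes these positions. This is exactly how the paper has already described $\mathcal{R}$, so I will cite it rather than redo it.
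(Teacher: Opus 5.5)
Your proposal is correct and follows the same route as the paper's proof. You pass through the Frobenius-isometric rearrangement $\mathcal{R}$ to get $\lVert \mathcal{R}(A) - \vect(B\otimes C)\vect(X)^\tp\rVert_\F$, expand, and complete the square to obtain a constant plus $\lVert B\rVert_\F^2\lVert C\rVert_\F^2\lVert X - H\rVert_\F^2$; your explicit note that $B, C \ne 0$ is needed for $H$ to be defined is left implicit in the paper.
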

\begin{proof}
Since $\mathcal{R}$ preserves Frobenius norms,
\begin{align*}
\lVert A - B &\otimes X \otimes C \rVert^2_\F = \lVert \mathcal{R}(A) - \vect(B \otimes C) \vect(X)^\tp \rVert^2_\F\\
&= \lVert \mathcal{R}(A) \rVert_\F^2 - 2 \vect(B \otimes C)^\tp \mathcal{R}(A) \vect(X) + \lVert B \rVert_\F^2 \lVert C \rVert_\F^2 \lVert X \rVert_\F^2\\
&= \begin{multlined}[t]
    \lVert \mathcal{R}(A) \rVert^2_\F - \frac{\lVert \mathcal{R}(A)^\tp \vect(B \otimes C) \rVert_2^2}{\lVert B \rVert_\F^2 \lVert C \rVert_\F^2}\\
    + \lVert B \rVert_\F^2 \lVert C \rVert_\F^2 \biggl(\lVert X \rVert^2 - \frac{2\mathcal{R}(A)^\tp \vect(B \otimes C)\vect(X)}{\lVert B \rVert_\F^2 \lVert C \rVert_\F^2} + \frac{\lVert \mathcal{R}(A)^\tp \vect(B \otimes C) \rVert_2^2}{\lVert B \rVert_\F^4 \lVert C \rVert_\F^4}\biggr)
    \end{multlined}\\
&=  \lVert \mathcal{R}(A) \rVert^2_\F - \frac{\lVert \mathcal{R}(A)^\tp \vect(B \otimes C) \rVert_2^2}{\lVert B \rVert_\F^2 \lVert C \rVert_\F^2}+ \lVert B \rVert_\F^2 \lVert C \rVert_\F^2  \lVert \vect(X) - \vect(F) \rVert_2^2\\
&= \lVert \mathcal{R}(A) \rVert^2_\F - \frac{\lVert \mathcal{R}(A)^\tp \vect(B \otimes C) \rVert_2^2}{\lVert B \rVert_\F^2 \lVert C \rVert_\F^2}+ \lVert B \rVert_\F^2 \lVert C \rVert_\F^2  \lVert X - H \rVert_2^2. \qedhere
\end{align*}
\end{proof}
Strictly speaking, the problem \eqref{eq:Kronecker} is of a different nature from other problems considered in this article and in \cite{li22}. For instance, the results of Section~\ref{sec:iterative} will not apply to \eqref{eq:Kronecker}. 

\subsection{Nearest Kronecker rank-$r$}\label{sec:Kronecker}

In this section as well as Section~\ref{sec:partial-trace}, any matrix $A \in \R^{m_1m_2 \times n_1n_2}$ will be assumed to be partitioned into a block matrix
\begin{equation}\label{eq:part}
A = (A_{ij}), \quad A_{ij} \in \R^{m_2 \times n_2}, \quad i = 1,\dots, m_1, \; j = 1,\dots, n_1,
\end{equation}
and similarly for matrices in $\R^{m_1 m_2 \times p_1 p_2}$, $\R^{p_1 p_2 \times q_1 q_2}$, and $\R^{q_1 q_2 \times n_1 n_2}$.

Let $A \in \R^{m_1m_2 \times n_1n_2}$, $B_1 \in \R^{m_1 \times p_1}$, $B_2 \in \R^{m_2 \times p_2}$, $C_1 \in \R^{q_1 \times n_1}$, and $C_2 \in \R^{q_2 \times n_2}$. We repeat the definition of Kronecker rank for easy reference: For $X \in \R^{p_1p_2 \times q_1q_2}$,
\[
\rank_\otimes (X) \coloneqq  \min \{ r : X = Y_1 \otimes Z_1 + \dots + Y_r \otimes Z_r\},
\]
with minimum taken over $Y_1,\dots,Y_r\in \R^{p_1 \times q_1}$, $Z_1,\dots,Z_r \in \R^{p_2 \times q_2}$. We seek the solution to
\begin{equation}\label{eq:Kronecker-problem} 
\min_{\rank_\otimes(X) \le r} \lVert  A- (B_1 \otimes B_2)X(C_1 \otimes C_2) \rVert_\F.
\end{equation}
As mentioned in the introduction, without the $B_1 \otimes B_2$ and $C_1 \otimes C_2$ multipliers, the solution may be found in \cite{allen,kon, vanLoan}.

We introduce another rearrangement operator. This terminology is due to \cite{vanLoan}, and is called tilde transform in \cite{kon}. Let $\mathcal{R} : \R^{m_1 m_2 \times n_1 n_2} \to \R^{m_2 n_2 \times m_1 n_1}$ be the unique linear operator satisfying
\begin{equation}\label{eq:tilde-vec}
    \mathcal{R}(Y \otimes Z) = \vect(Z)\vect(Y)^\tp
\end{equation}
for all $Y \in \R^{m_1 \times n_1}$, $Z \in \R^{m_2 \times n_2}$. As we mentioned in the last section, rearrangement operators always preserve Frobenius norms and are invertible. If desired, one may define $\mathcal{R}$ via an explicit expression
\[
\mathcal{R}(X) = \sum_{j=1}^{n_1} \sum_{i=1}^{m_1} \vect(X_{ij})\vect(E_{ij})^\tp,
\]
where $E_{ij} \in \R^{m_1 \times n_1}$, $i = 1,\dots, m_1$, $j = 1,\dots, n_1$, are the standard basis matrices. But this is evidently more cumbersome than defining it via the property we seek, i.e., \eqref{eq:tilde-vec}.

The block vectorization operator is defined in \cite{kon} as
\[\vecb(A) \coloneqq \vect(\mathcal{R}(A))\]
for $A \in \R^{m_1 m_2 \times n_1 n_2}$ with the assumed block partition \eqref{eq:part}. For example, if $m_1 = n_1 = 2$, then
\[
    \vecb\biggl(\begin{bmatrix}
        A_{11} &A_{12}\\
        A_{21} &A_{22}
    \end{bmatrix}\biggr) = \bigl(
        \vect(A_{11}),
        \vect(A_{21}),
        \vect(A_{12}),
        \vect(A_{22}) \bigr).
\]
The operator preserves inner products: For $Y_1, Y_2 \in \R^{m_1m_2 \times n_1n_2}$, 
\[
\vecb(Y_1)^\tp \vecb(Y_2) = \tr(Y_1^\tp Y_2).
\]
It also behaves much like $\vect$ in \eqref{eq:vec}  \cite[Equation~11]{kon}: For any $B \in \R^{m_1m_2 \times p_1p_2}$, $C \in \R^{q_1q_2 \times n_1n_2}$, and $X \in \R^{p_1p_2 \times q_1q_2}$,
\begin{equation}\label{eq:product-rule}
    \vecb(BXC) = \Biggl(\sum_{l=1}^{p_1}\sum_{k=1}^{m_1}\sum_{j=1}^{n_1}\sum_{i=1}^{q_1} E_{ij} \otimes F_{kl} \otimes C_{ji}^\tp \otimes B_{kl} \Biggr) \vecb(X),
\end{equation}
where $\{ E_{ij}  \in \R^{n_1 \times q_1}: i = 1,\dots,q_1, \, j = 1, \dots, n_1\}$ and $\{ F_{kl} \in \R^{m_1 \times p_1}: k=1, \dots, m_1, \, l = 1,\dots,p_1\}$ are the standard bases of the respective spaces.

We now show that the generalized Kronecker rank-$r$ nearness problem reduces to the generalized rank-$r$ nearness problem, whose closed form solution may be found in \cite{Sondermann, Fri, li22}.
\begin{proposition}\label{prop:Kronecker-vecb}
Let $A \in \R^{m_1m_2 \times n_1n_2}$, $B_1 \in \R^{m_1 \times p_1}$, $B_2 \in \R^{m_2 \times p_2}$, $C_1 \in \R^{q_1 \times n_1}$, and $C_2 \in \R^{q_2 \times n_2}$. Then $X_* \in \R^{p_1p_2 \times q_1q_2}$ is a solution to \eqref{eq:Kronecker-problem} if and only if $\mathcal{R}(X_*)$ is a solution to 
\begin{equation} \label{eq:Kronecker-vecb}
    \min_{\rank(\widehat{X})\le r} \lVert \mathcal{R}(A)- (C_2^\tp \otimes B_2)\widehat{X}(C_1 \otimes B_1^\tp) \rVert_\F.
\end{equation}
\end{proposition}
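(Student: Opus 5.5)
The plan is to push the whole problem through the rearrangement operator $\mathcal{R}$ of \eqref{eq:tilde-vec}. Two facts need to be established first. One is that $\mathcal{R}$ is a Frobenius isometry that is invertible. The other is that it turns Kronecker rank into ordinary rank: $\rank_\otimes(X) = \rank(\mathcal{R}(X))$ for $X \in \R^{p_1p_2 \times q_1q_2}$. The second fact is immediate from \eqref{eq:tilde-vec} and linearity. A decomposition $X = \sum_{i=1}^r Y_i \otimes Z_i$ maps to $\mathcal{R}(X) = \sum_{i=1}^r \vect(Z_i)\vect(Y_i)^\tp$, which is a sum of $r$ rank-one matrices. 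Conversely, since $\mathcal{R}$ is invertible, any rank-$r$ factorization $\mathcal{R}(X) = \sum_i u_i v_i^\tp$ pulls back to a Kronecker decomposition of $X$ with $r$ terms. Hence $X \mapsto \mathcal{R}(X)$ is a bijection from $\{\rank_\otimes(X) \le r\}$ onto $\{\rank(\widehat{X}) \le r\}$.

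The central step is a product rule for $\mathcal{R}$:
\[
\mathcal{R}\bigl((B_1 \otimes B_2)X(C_1 \otimes C_2)\bigr) = (C_2^\tp \otimes B_2)\,\mathcal{R}(X)\,(C_1 \otimes B_1^\tp).
\]
Both sides are linear in $X$, so it suffices to check the identity on pure tensors $X = Y \otimes Z$. For such $X$, the left side is
\[
\mathcal{R}\bigl((B_1 Y C_1) \otimes (B_2 Z C_2)\bigr) = \vect(B_2 Z C_2)\vect(B_1 Y C_1)^\tp .
\]
By \eqref{eq:vec}, $\vect(B_2ZC_2) = (C_2^\tp \otimes B_2)\vect(Z)$ and $\vect(B_1YC_1) = (C_1^\tp \otimes B_1)\vect(Y)$. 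The transpose of the second is $\vect(Y)^\tp (C_1 \otimes B_1^\tp)$. Putting these together gives the right side with $\mathcal{R}(Y\otimes Z) = \vect(Z)\vect(Y)^\tp$. I expect this bookkeeping to be the only real obstacle: the transposes must match the paper's column-major $\vect$ convention, and one must confirm that the target $\mathcal{R}$ acts on $\R^{m_1m_2\times n_1n_2}$ while the source $\mathcal{R}$ acts on $\R^{p_1p_2\times q_1q_2}$. These are two different instances of the same construction. Alternatively, this identity could be read off from \eqref{eq:product-rule}, but the pure-tensor check is cleaner.

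With these pieces in place, the conclusion follows directly. By linearity of $\mathcal{R}$ and its isometry property,
\[
\lVert A - (B_1\otimes B_2)X(C_1\otimes C_2)\rVert_\F = \lVert \mathcal{R}(A) - (C_2^\tp \otimes B_2)\mathcal{R}(X)(C_1\otimes B_1^\tp)\rVert_\F .
\]
So the objective of \eqref{eq:Kronecker-problem} at $X$ equals the objective of \eqref{eq:Kronecker-vecb} at $\widehat{X} = \mathcal{R}(X)$. The map $X \mapsto \mathcal{R}(X)$ is a bijection between the two feasible sets, so it carries minimizers to minimizers in both directions. This gives the stated equivalence.
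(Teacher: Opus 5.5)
Your proof is correct and has the same skeleton as the paper's. Both transport the problem through $\mathcal{R}$, use that $\mathcal{R}$ is a Frobenius isometry, establish $\mathcal{R}\bigl((B_1\otimes B_2)X(C_1\otimes C_2)\bigr) = (C_2^\tp\otimes B_2)\,\mathcal{R}(X)\,(C_1\otimes B_1^\tp)$, and identify $\rank_\otimes(X)$ with $\rank(\mathcal{R}(X))$.

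You differ from the paper only in how that middle identity is obtained. The paper starts from the quoted block-vectorization rule \eqref{eq:product-rule}. It collapses the quadruple sum to $C_1^\tp\otimes B_1\otimes C_2^\tp\otimes B_2$ acting on $\vecb(X)=\vect(\mathcal{R}(X))$, then un-vectorizes with \eqref{eq:vec}. This reuses the $\vecb$ calculus set up just before the proposition, at the cost of relying on an externally cited formula with heavy index bookkeeping. You instead check the identity on pure tensors $Y\otimes Z$ via the mixed-product property and extend by linearity. That proves it from scratch, and your transpose handling, $\vect(B_1YC_1)^\tp=\vect(Y)^\tp(C_1\otimes B_1^\tp)$, is exactly right. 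You also prove both directions of the rank correspondence, which the paper only asserts from \eqref{eq:tilde-vec}.

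The one fact you announce but never verify is that $\mathcal{R}$ is an invertible isometry. The paper likewise takes this for granted, and it is immediate: $\mathcal{R}$ sends each standard basis matrix $E_{ij}\otimes E_{kl}$ to the standard basis matrix $\vect(E_{kl})\vect(E_{ij})^\tp$, so it merely permutes entries.
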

\begin{proof}
Let $C_1 = (c_{ji})$ and $B_1 = (b_{kl})$. Then 
\begin{align*}
    \sum_{l=1}^{p_1}\sum_{k=1}^{m_1}\sum_{j=1}^{n_1}\sum_{i=1}^{q_1}(E_{ij} \otimes E_{kl}) \otimes (c_{ji}C_2^\tp \otimes b_{kl}B_2) &= \sum_{l=1}^{p_1}\sum_{k=1}^{m_1}\sum_{j=1}^{n_1}\sum_{i=1}^{q_1}(c_{ji}E_{ij} \otimes b_{kl}E_{kl}) \otimes (C_2^\tp \otimes B_2)\\
&= C_1^\tp \otimes B_1 \otimes C_2^\tp \otimes B_2.
\end{align*}
Since $\vecb$ preserves Frobenius norms, by \eqref{eq:product-rule} and \eqref{eq:vec},
\begin{align}
\lVert A- (B_1 &\otimes B_2) X(C_1 \otimes C_2)\rVert_\F \notag \\
    &= \bigl\lVert \vecb(A)- \vecb\bigl((B_1 \otimes B_2)X(C_1 \otimes C_2)\bigr)\bigr\rVert_\F \notag \\
    &=  \biggl\lVert \vecb(A)- \biggl(\sum_{l=1}^{p_1}\sum_{k=1}^{m_1}\sum_{j=1}^{n_1}\sum_{i=1}^{q_1}(E_{ij} \otimes E_{kl}) \otimes (c_{ji}C_2^\tp \otimes b_{kl}B_2) \biggr)\vecb(X)\biggr\rVert_\F \notag \\
    &= \lVert \vect(\mathcal{R}(A))- (C_1^\tp \otimes B_1 \otimes C_2^\tp \otimes B_2)\vect(\mathcal{R}(X))\rVert_\F \notag \\
    &= \bigl\lVert \vect(\mathcal{R}(A))- \vect\bigl((C_2^\tp \otimes B_2)\mathcal{R}(X)(C_1 \otimes B_1^\tp)\bigr) \bigr\rVert_\F \notag \\
    &= \lVert \mathcal{R}(A) - (C_2^\tp \otimes B_2)\mathcal{R}(X)(C_1 \otimes B_1^\tp)\rVert_\F. \label{eq:vecb-equiv}
\end{align}
By \eqref{eq:tilde-vec}, $\rank_\otimes(X) \le r $ if and only if $\rank(\mathcal{R}(X)) \le r$. Writing $\widehat{X} = \mathcal{R}(X)$ yields \eqref{eq:Kronecker-vecb}.
\end{proof}

We will use the solution in \cite{li22}, simplest and most explicit among \cite{Sondermann, Fri, li22}, to describe an algorithm. We emphasize that one does not need to compute the singular value decomposition of $C_1^\tp \otimes B_1$ and $C_2^\tp \otimes B_2$ in Algorithm~\ref{alg:Kronecker}, i.e., of $m_1n_1 \times p_1q_1$ and $m_2n_2 \times p_2q_2$ matrices. These follow from the singular value decompositions of $B_1$, $C_1$, $B_2$, $C_2$ since Kronecker product preserves singular value decompositions.

\begin{algorithm}[htb!]
    \caption{Generalized Kronecker rank-$r$ nearness} \label{alg:Kronecker}
    \begin{algorithmic}[1]
    \Require $A \in \R^{m_1m_2 \times n_1n_2}$, $B_1 \in \R^{m_1 \times p_1}$, $B_2 \in \R^{m_2 \times p_2}$, $C_1 \in \R^{q_1 \times n_1}$, $C_2 \in \R^{q_2 \times n_2}$;
    \State compute $\mathcal{R}(A)$;
    \State compute singular value decompositions  of $B_1$, $C_1$, $B_2$, $C_2$ to get
    \[(C_1^\tp \otimes B_1) = U_1 \begin{bmatrix}
        \Sigma_1 &0\\ 0 &0
    \end{bmatrix} V_1^\tp,\quad (C_2^\tp \otimes B_2) = U_2 \begin{bmatrix}
        \Sigma_2 &0\\ 0 &0
    \end{bmatrix} V_2^\tp;\]
    \State compute $A_{11}$ from 
    \[U_2^\tp \mathcal{R}(A)V_1 = \begin{bmatrix}
        A_{11} & A_{12} \\
        A_{21} & A_{22}
    \end{bmatrix};\]
    \State compute singular value decomposition $A_{11} = U\Sigma V^\tp$;
    \State compute 
    \[\widehat{X} = V_2 \begin{bmatrix}
        \Sigma_2^{-1} U_r\Sigma_r V_r^\tp \Sigma_1^{-1} &0\\ 0 &0
    \end{bmatrix} U_1^\tp\]
    where $U_r$ is the first $r$ columns of $U$, $V_r$ those of $V$, and $\Sigma_r$ the top left $r\times r$ block of $\Sigma$;
    \State compute $X_* =  \mathcal{R}^{-1}(\widehat{X}) \in \R^{p_1p_2 \times q_1q_2}$;
    \Ensure $X_*$.
    \end{algorithmic}
\end{algorithm}

\subsection{Nearest prescribed partial trace}\label{sec:partial-trace}

The use of Kronecker product also allows us to discuss notions with no nontrivial counterpart for matrix product. One example is partial trace, simple but ubiquitous in quantum computing \cite{NielsenChuang}, indispensable for describing observable quantities of subsystems in a composite system.

The partial trace of a matrix $X \in \R^{p^2 \times p^2}$, partitioned as $X = (X_{ij})$ with blocks $X_{ij} \in \R^{p \times p}$, $i,j = 1,2,\dots,p$, is
\[
\tr_\Par(X) = \sum_{i=1}^{p} \tr(X_{ii}),
\]
i.e., it is a matrix-valued trace formed by taking sum of diagonal blocks and is compatible with Kronecker product:
\[
\tr_\Par(Y \otimes Z) = \tr(Y)Z
\]
for any $Y$ and $Z \in \R^{p \times p}$.

We will consider the generalized  nearness problem for prescribed partial trace, i.e., its value $\lambda$ is prescribed a priori:
\begin{equation}\label{eq:partial-trace}
    \min \; \{\lVert  A- (B_1 \otimes B_2)X(C_1 \otimes C_2)  \rVert_\F: \tr_\Par(X(W \otimes I)) = \lambda W \text{ for some } W \in \R^{p \times p}\},
\end{equation}
for fixed $A \in \R^{m_1m_2\times n_1n_2}$, $B_1 \in \R^{m_1 \times p}$, $B_2 \in \R^{m_2 \times p}$, $C_1 \in \R^{p \times n_1}$, $C_2 \in \R^{p \times n_2}$, and $\lambda \in \R$. This reminds us of the prescribed eigenvalue problem in \ref{it:eig} on p.~\pageref{it:eig}, although the latter is not a special case of \eqref{eq:partial-trace}. Nevertheless, our first method of solving \eqref{eq:partial-trace} is by a reduction to the prescribed eigenvalue problem.

\begin{proposition}\label{prop:partial-trace}
Let $A \in \R^{m_1m_2\times m_1m_2}$, $B_1 \in \R^{m_1 \times p}$, $B_2 \in \R^{m_2 \times p}$, $C_1 \in \R^{p \times n_1}$, $C_2 \in \R^{p \times n_2}$, and $\lambda \in \R$. Then $X_* \in \R^{p^2 \times p^2}$ is a solution to \eqref{eq:partial-trace} if and only if $\mathcal{R}(X_*)$ is a solution to 
\begin{equation} \label{eq:partial-trace-eig}
    \min_{\lambda \in \lambda(\widehat{X})} \lVert \mathcal{R}(A)- (C_2^\tp \otimes B_2)\widehat{X}(C_1 \otimes B_1^\tp) \rVert_\F.
\end{equation}
\end{proposition}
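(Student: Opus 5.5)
Following the proof of Proposition~\ref{prop:Kronecker-vecb}, I would handle the objective and the constraint separately. For the objective, note that the computation leading to \eqref{eq:vecb-equiv} uses only \eqref{eq:product-rule}, \eqref{eq:vec} and the fact that $\vecb$ (equivalently $\mathcal{R}$) is a Frobenius isometry. None of it depends on the rank constraint. I would therefore reuse it verbatim with $p_1=p_2=q_1=q_2=p$ to obtain, for every $X\in\R^{p^2\times p^2}$,
\[
\lVert A-(B_1\otimes B_2)X(C_1\otimes C_2)\rVert_\F=\lVert \mathcal{R}(A)-(C_2^\tp\otimes B_2)\mathcal{R}(X)(C_1\otimes B_1^\tp)\rVert_\F .
\]
Since $\mathcal{R}$ is a bijection, the statement then reduces to showing that the feasible sets correspond. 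That is, I need: there is $W\ne 0$ with $\tr_\Par(X(W\otimes I))=\lambda W$ if and only if $\lambda$ is an eigenvalue of $\widehat{X}=\mathcal{R}(X)$. I take $W\neq0$ as implicit, since otherwise the constraint is vacuous.

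For the constraint, the plan is to compute $\tr_\Par(X(W\otimes I))$ in terms of $\mathcal{R}(X)$. By linearity it suffices to treat $X=Y\otimes Z$. Then $X(W\otimes I)=YW\otimes Z$, so
\[
\tr_\Par(X(W\otimes I))=\tr(YW)\,Z=\vect(Z)\text{-reshaped}\cdot \vect(Y^\tp)^\tp\vect(W).
\]
In vectorized form this reads $\vect\bigl(\tr_\Par(X(W\otimes I))\bigr)=\vect(Z)\vect(Y^\tp)^\tp\vect(W)$. Compare this with $\mathcal{R}(Y\otimes Z)=\vect(Z)\vect(Y)^\tp$ from \eqref{eq:tilde-vec}. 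Writing $\Pi$ for the commutation (vec-permutation) matrix with $\Pi\vect(Y)=\vect(Y^\tp)$, I get
\[
\vect\bigl(\tr_\Par(X(W\otimes I))\bigr)=\mathcal{R}(X)\,\Pi\,\vect(W),
\]
and by linearity this holds for all $X$. Hence the constraint is the eigen-equation $\mathcal{R}(X)\Pi w=\lambda w$ with $w=\vect(W)\neq0$.

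The main obstacle is exactly this transpose, i.e.\ the permutation $\Pi$. With it, the constraint says $\lambda$ is an eigenvalue of $\mathcal{R}(X)\Pi$, not of $\mathcal{R}(X)$ itself. I would first re-check the block and index conventions of $\mathcal{R}$ and of $\tr_\Par$. If $\Pi$ genuinely survives, I would absorb it, since $\Pi$ is a permutation and hence an orthogonal involution. Set $\widehat{X}=\mathcal{R}(X)\Pi$ and note that $(C_1\otimes B_1^\tp)$ becomes $\Pi(C_1\otimes B_1^\tp)$ in the objective, which is still of the same generalized-nearness form. In that case the proposition holds with $\mathcal{R}$ read as this $\Pi$-twisted rearrangement. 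Either way the two problems have equal objective values under a bijection of feasible sets, which gives the ``if and only if'' claim. Problem \eqref{eq:partial-trace-eig} is then a prescribed-eigenvalue generalized nearness problem of type~\ref{it:eig}, solvable by \cite{li22}.
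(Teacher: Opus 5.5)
Your plan follows the paper's proof: you reuse \eqref{eq:vecb-equiv} for the objective, write $X=\sum_i Y_i\otimes Z_i$, and compare $\tr_\Par(X(W\otimes I))$ with $\mathcal{R}(X)\vect(W)$. The difference is that your computation is right and the paper's is not. The paper's proof uses $\vect(Y_i)^\tp\vect(W)=\tr(Y_iW)$, but in fact $\vect(Y_i)^\tp\vect(W)=\tr(Y_i^\tp W)$; for $Y_i=E_{jk}$ the two sides are $w_{jk}$ and $w_{kj}$. That is exactly where your commutation matrix $\Pi$ disappears, so \eqref{eq:partial-eig} as printed is false. Re-checking conventions will not remove $\Pi$: with the paper's definitions of $\vect$, $\mathcal{R}$ and $\tr_\Par$, the correct identity is $\vect\bigl(\tr_\Par(X(W\otimes I))\bigr)=\mathcal{R}(X)\Pi\vect(W)$, as you derived (using $\Pi^\tp=\Pi$).

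So you should not close with ``either way.'' The proposition as literally stated is false, and no argument gives the ``if and only if'' for the untwisted $\mathcal{R}$. Take $m_1=m_2=n_1=n_2=p\ge 2$, $B_1=B_2=C_1=C_2=I_p$, $\lambda=-1$, and $A=\sum_{j,k}E_{jk}\otimes E_{kj}$. Then $\tr_\Par(A(W\otimes I))=\sum_{j,k}\tr(E_{jk}W)E_{kj}=W$ for every $W$, so $A$ is infeasible for \eqref{eq:partial-trace}. On the other hand, $\mathcal{R}(A)=\sum_{j,k}\vect(E_{kj})\vect(E_{jk})^\tp=\Pi$ has eigenvalue $-1$, with eigenvectors $\vect(W)$ for nonzero skew-symmetric $W$. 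Hence $\mathcal{R}(A)$ is feasible for \eqref{eq:partial-trace-eig} with objective value $0$, so it is a minimizer, and the ``if'' direction fails.

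What your argument does prove is the corrected statement. Namely, $X_*$ solves \eqref{eq:partial-trace} if and only if $\mathcal{R}(X_*)\Pi$ solves $\min_{\lambda\in\lambda(\widehat{X})}\lVert \mathcal{R}(A)-(C_2^\tp\otimes B_2)\widehat{X}\,\Pi(C_1\otimes B_1^\tp)\rVert_\F$. Equivalently, the proposition holds verbatim if the constraint is changed to $\tr_\Par(X(W^\tp\otimes I))=\lambda W$.

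Your proof of the corrected version is sound:
\begin{enumerate}
\item $X\mapsto\mathcal{R}(X)\Pi$ is a bijection.
\item The objectives agree because $\Pi^2=I$.
\item Feasibility transfers because a real eigenvalue of a real matrix has a real eigenvector, which reshapes into a nonzero real $W$.
\end{enumerate}
Your remark that $W\neq 0$ must be imposed is also correct; as printed, $W=0$ makes the constraint vacuous. The same $\Pi$ must be inserted downstream: in Corollary~\ref{cor:partial-trace} the shift $\lambda I$ becomes $\lambda\Pi$, and in the partial-trace case of Proposition~\ref{prop:attain} $\mathcal{R}^{-1}(I)$ becomes $\mathcal{R}^{-1}(\Pi)$.
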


\begin{proof}
By the singular value decomposition of $\mathcal{R}(X)$, we have $\mathcal{R}(X) = \sum_{i=1}^{r} \vect(Z_i) \vect(Y_i)^\tp$ for some $Y_i \in \R^{p_1 \times p_1}$ and $Z_i \in \R^{p_2 \times p_2}$, $i=1,\dots,r \le \min\{p_1^2,p_2^2\}$. For any $W \in \R^{p_1 \times p_1}$, 
\[\mathcal{R}(X)\vect(W) = \sum_{i=1}^{r} \vect(Z_i) \vect(Y_i)^\tp \vect(W) =  \sum_{i=1}^{r} \tr(Y_i W) \vect(Z_i).\]
On the other hand, by \eqref{eq:tilde-vec}, $X = \sum_{i=1}^{r} Y_i \otimes Z_i$, so
\[
\tr_\Par\bigl(X(W \otimes I)\bigr) =\tr_\Par \biggl(\sum_{i=1}^{r} (Y_i \otimes Z_i) (W \otimes I) \biggr) = \sum_{i=1}^{r}  \tr_\Par \bigl((Y_i W \otimes Z_i)\bigr)= \sum_{i=1}^{r} \tr(Y_i W) Z_i.
\]
Therefore, 
\begin{equation}\label{eq:partial-eig}
    \mathcal{R}(X)\vect(W) = \lambda \vect(W) \quad \text{if and only if} \quad \tr_\Par(X(W \otimes I)) = \lambda W.
\end{equation}
The rest of the proof follows from the same calculation in \eqref{eq:vecb-equiv}.
\end{proof}

Our second method of solving \eqref{eq:partial-trace} uses a further reduction to the generalized rank-constrained problem \eqref{eq:Kronecker-problem}, which could in turn be solved by Algorithm~\ref{alg:Kronecker}.
\begin{corollary}\label{cor:partial-trace}
Let $A \in \R^{m_1m_2\times m_1m_2}$, $B_1 \in \R^{m_1 \times p}$, $B_2 \in \R^{m_2 \times p}$, $C_1 \in \R^{p \times n_1}$, $C_2 \in \R^{p \times n_2}$, and $\lambda \in \R$. Then $X_* \in \R^{p^2 \times p^2}$ is a solution to \eqref{eq:partial-trace} if and only if $\mathcal{R}(X_*)$ is a solution to 
\begin{equation} \label{eq:partial-trace-rank}
\min_{\rank(\widehat{X}-\lambda I) \le p^2-1}
	\lVert \mathcal{R}(A)-\lambda(C_2^\tp C_1 \otimes B_2 B_1^\tp) - (C_2^\tp \otimes B_2)(\widehat{X} - \lambda I)(C_1 \otimes B_1^\tp)\rVert_\F.
\end{equation}
\end{corollary}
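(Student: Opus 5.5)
The plan is to start from Proposition~\ref{prop:partial-trace} and rewrite its eigenvalue constraint as a rank constraint. By that proposition, $X_*$ solves \eqref{eq:partial-trace} if and only if $\widehat{X}_* = \mathcal{R}(X_*)$ minimizes $\lVert \mathcal{R}(A)- (C_2^\tp \otimes B_2)\widehat{X}(C_1 \otimes B_1^\tp) \rVert_\F$ over all $\widehat{X} \in \R^{p^2 \times p^2}$ having $\lambda$ as an eigenvalue. Here the eigenvector is $\vect(W)$ with $W \ne 0$, which is implicit in \eqref{eq:partial-eig}.

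The first step is the standard reformulation of this feasible set. For a square $p^2 \times p^2$ matrix, $\lambda \in \lambda(\widehat{X})$ holds if and only if $\widehat{X} - \lambda I$ is singular, that is, if and only if $\rank(\widehat{X}-\lambda I) \le p^2-1$. So the feasible set in \eqref{eq:partial-trace-eig} is exactly $\{\widehat{X} : \rank(\widehat{X}-\lambda I)\le p^2-1\}$.

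The second step is an algebraic rewriting of the objective. Write $\widehat{X} = (\widehat{X}-\lambda I) + \lambda I$ and expand:
\[
(C_2^\tp \otimes B_2)\widehat{X}(C_1 \otimes B_1^\tp) = \lambda (C_2^\tp \otimes B_2)(C_1 \otimes B_1^\tp) + (C_2^\tp \otimes B_2)(\widehat{X}-\lambda I)(C_1 \otimes B_1^\tp).
\]
By the mixed-product property, $(C_2^\tp \otimes B_2)(C_1 \otimes B_1^\tp) = C_2^\tp C_1 \otimes B_2 B_1^\tp$. The dimensions are compatible because $C_2^\tp$ is $n_2\times p$, $C_1$ is $p\times n_1$, $B_2$ is $m_2\times p$, and $B_1^\tp$ is $p\times m_1$. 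After this substitution, the objective of \eqref{eq:partial-trace-eig} coincides term by term with that of \eqref{eq:partial-trace-rank}.

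Combining the two steps, the two minimization problems have the same feasible set and the same objective, so they have the same minimizers. The equivalence with \eqref{eq:partial-trace} then follows from Proposition~\ref{prop:partial-trace}. Since $\widehat{Y} = \widehat{X}-\lambda I$ is a bijective change of variables, the result can also be read as a generalized rank-$(p^2-1)$ problem in $\widehat{Y}$. That is the form to which Algorithm~\ref{alg:Kronecker} (or the solution in \cite{li22}) applies.

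The only delicate point is bookkeeping rather than mathematics. We must make sure the correspondence runs through the same $\mathcal{R}$, and that the shift by $\lambda I$ is done in the $p^2\times p^2$ space of $\widehat{X}$. Once the eigenvalue-to-singularity equivalence is stated, everything else is direct.
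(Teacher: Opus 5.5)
Your proof is correct and follows the paper's route. Like the paper, you invoke Proposition~\ref{prop:partial-trace} and then use that $\lambda$ is an eigenvalue of the $p^2\times p^2$ matrix $\widehat{X}$ if and only if $\rank(\widehat{X}-\lambda I)\le p^2-1$; you also spell out the mixed-product identity $(C_2^\tp \otimes B_2)(C_1 \otimes B_1^\tp) = C_2^\tp C_1 \otimes B_2 B_1^\tp$, which shows the two objectives coincide and which the paper leaves implicit.
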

\begin{proof}
Observe that $\lambda \in \lambda(X)$ if and only if $\rank(X-\lambda I) \le p^2-1$. Therefore, the minimum in  \eqref{eq:partial-trace-eig} equals that in \eqref{eq:partial-trace-rank}.
\end{proof}

One might wonder if the more general variant of \eqref{eq:Kronecker-problem},
\begin{gather*}
\min \; \{\lVert  A-  BXC  \rVert_\F: \tr_\Par(X(W \otimes I)) = \lambda W \text{ for some } W \in \R^{p \times p}\},
\intertext{as well as that of \eqref{eq:partial-trace} from the last section,}
\min_{\rank_\otimes(X) \le r} \lVert  A- BXC \rVert_\F,
\end{gather*}
would similarly have closed form solutions. Note that here we no longer require that $B = B_1 \otimes B_2$ and $C = C_1 \otimes C_2$. We do not have an answer and leave these open questions for interested readers. Nevertheless, the iterative Algorithm~\ref{alg:Schatten} in Section~\ref{sec:iterative} will solve not only these problems, but with any Schatten norm in place of the Frobenius norm, to any desired degree of accuracy.

\section{Norm dependence of generalized nearness problems}\label{sec:norm}

The closed form solutions in Section~\ref{sec:closed-form}, as well as those in \cite{li22}, are exclusively for generalized nearness problems in the Frobenius norm $\lVert \, \cdot \, \rVert_\F$. From this section onwards, our focus would be on arbitrary orthogonally invariant norms $\lVert\,\cdot\,\rVert$, occasionally restricted to Schatten $p$-norms $\lVert \, \cdot \, \rVert_{\st, p}$, $p \in [1,\infty]$.

As we mentioned in Section~\ref{sec:imposs}, we will show that one cannot expect a Mirsky-type result \cite[Theorem~2]{mir} for rank constrained generalized nearness problems. Let $ A\in \R^{m \times n}$, $B \in \R^{m \times p}$, and $C \in \R^{q \times n}$. Let
\[
B=U_B\begin{bmatrix}
    \Sigma_B & 0 \\
0 & 0
\end{bmatrix} V_B^\tp, \quad C=U_C\begin{bmatrix}
    \Sigma_C & 0 \\
0 & 0
\end{bmatrix} V_C^\tp,
\]
be the singular value decompositions of $B$ and $C$ with $\Sigma_B = \diag(\sigma_1(B),\dots,\sigma_{s}(B))$, $\Sigma_C = \diag(\sigma_1(C),\dots,\sigma_{t}(C))$, $\rank(B) = s$, and $\rank(C) = t$. For an orthogonal invariant norm $\lVert\,\cdot\,\rVert$,
\[
    \lVert A-BXC \rVert = \biggl \lVert U^\tp_B A V_C - \begin{bmatrix}
    \Sigma_B & 0 \\
0 & 0
\end{bmatrix} V_B^\tp X  U_C\begin{bmatrix}
   \Sigma_C & 0 \\
0 & 0
\end{bmatrix}\biggr\rVert.
\]
Let
\begin{equation}\label{eq:equiv1}
    U^\tp_B A V_C \eqqcolon \begin{bmatrix}
        A_{11} & A_{12} \\
    A_{21} & A_{22}
    \end{bmatrix}, \quad 
    V^\tp_B X U_C \eqqcolon \begin{bmatrix}
        X_{11} & X_{12} \\
    X_{21} & X_{22}
    \end{bmatrix}
\end{equation}
with $A_{11},X_{11}\in \R^{s \times t}$. Then
\begin{equation}\label{eq:equiv2}
    \lVert A-BXC \rVert = \biggl\lVert \begin{bmatrix}
        A_{11}-\Sigma_B X_{11}\Sigma_C & A_{12} \\
    A_{21} & A_{22}
    \end{bmatrix} \biggr\rVert.
\end{equation}
So
\begin{multline}\label{eq:completion}
\begin{bmatrix}
    Y_* & A_{12} \\
A_{21} & A_{22}
\end{bmatrix}  \in
    \argmin_{Y \in \R^{s \times t}} \biggl\lVert \begin{bmatrix}
    Y & A_{12} \\
A_{21} & A_{22}
\end{bmatrix} \biggr\rVert
\\
\Longleftrightarrow
\quad
 V_B
\begin{bmatrix}
    \Sigma_B^{-1}(A_{11}-Y_*)\Sigma_C^{-1} & 0 \\
0 & 0
\end{bmatrix}
U_C^\tp \in
    \argmin_{X \in \R^{p \times q}} \lVert A- BXC\rVert.
\end{multline}
If the left side of \eqref{eq:completion} admits different minimizer under different orthogonally invariant norms, then so does the right side. We now prove an optimality condition for the left side of \eqref{eq:completion}. While the equivalence in \eqref{eq:completion} requires an  orthogonally invariant norm, Proposition~\ref{prop:complete} below holds for any arbitrary norm. 
\begin{proposition}\label{prop:complete}
Let $\lVert\,\cdot\,\rVert$ be a norm on $\R^{m\times n}$ and $\lVert\,\cdot\,\rVert^*$ be the dual norm. For any $S \in \R^{m \times n}$, we write $\varphi_S : \R^{m \times n} \to \R$, $X \mapsto \tr(S^\tp X)$, for the linear functional defined by $S$. Let $A_{12}\in \R^{s \times (n-t)}$, $A_{21}\in \R^{(m-s) \times t}$, $A_{22}\in \R^{(m-s) \times (n-t)}$, and $Y_* \in \R^{s \times t}$. 
Then
\[
\biggl\lVert \begin{bmatrix}
    Y_* & A_{12} \\
A_{21} & A_{22}
\end{bmatrix} \biggr\rVert \le \biggl\lVert \begin{bmatrix}
    Y & A_{12} \\
A_{21} & A_{22}
\end{bmatrix} \biggr\rVert \quad \text{for all } Y \in \R^{s \times t}
\]  if and only if there exists 
\begin{equation}\label{eq:R_form}
    S= \begin{bmatrix}
    0 & S_{12} \\
S_{21} & S_{22}
\end{bmatrix} \in \R^{m \times n}
\end{equation}
with $S_{12}\in \R^{s \times (n-t)}$, $S_{21}\in \R^{(m-s) \times t}$, $S_{22}\in \R^{(m-s) \times (n-t)}$ such that $\lVert \varphi_S \rVert^*=1$ and 
\begin{equation}\label{eq:completion_thm}
    \biggl\lVert \begin{bmatrix}
        Y_* & A_{12} \\
    A_{21} & A_{22}
    \end{bmatrix} \biggr\rVert = \tr\biggl(S^\tp \begin{bmatrix}
        Y_* & A_{12} \\
    A_{21} & A_{22}
    \end{bmatrix} \biggr) = \tr \biggl( S^\tp  \begin{bmatrix}
        0 & A_{12} \\
    A_{21} & A_{22}
    \end{bmatrix}  \biggr).
\end{equation}
\end{proposition}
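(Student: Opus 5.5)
This is a convex optimality condition for a norm restricted to an affine subspace, so I will prove it through the subdifferential of a norm and a normal-cone condition. Write
\[
M(Y) = \begin{bmatrix} Y & A_{12} \\ A_{21} & A_{22}\end{bmatrix}.
\]
The map $Y \mapsto \lVert M(Y)\rVert$ is convex, being a norm composed with an affine map. Hence $Y_*$ is a global minimizer if and only if $0$ lies in its subdifferential at $Y_*$.

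The key fact I will use is the standard description of $\partial\lVert\cdot\rVert$ at a point $Z$. It is the set of $S$ with $\lVert\varphi_S\rVert^*\le 1$ and $\tr(S^\tp Z)=\lVert Z\rVert$. When $Z\neq 0$ this forces $\lVert\varphi_S\rVert^*=1$.

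By the chain rule for a convex function composed with an affine map (no qualification issue arises, since the norm is finite everywhere), the subdifferential of $Y\mapsto\lVert M(Y)\rVert$ at $Y_*$ is $\{S_{11} : S\in\partial\lVert\cdot\rVert(M(Y_*))\}$, where $S_{11}$ denotes the top-left $s\times t$ block of $S$. So optimality is equivalent to the existence of a subgradient $S$ at $M(Y_*)$ with $S_{11}=0$. Such an $S$ has exactly the form \eqref{eq:R_form} and satisfies $\lVert M(Y_*)\rVert=\tr(S^\tp M(Y_*))$. Because $S_{11}=0$, we get $\tr(S^\tp M(Y_*))=\tr(S^\tp M(0))$, which is the second equality in \eqref{eq:completion_thm}.

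For the converse I will not rely on the chain rule but argue directly. Suppose $S$ has the form \eqref{eq:R_form}, $\lVert\varphi_S\rVert^*=1$, and \eqref{eq:completion_thm} holds. Then for any $Y$,
\[
\lVert M(Y)\rVert \ge \tr(S^\tp M(Y)) = \tr(S^\tp M(0)) = \lVert M(Y_*)\rVert .
\]
The inequality is just the definition of the dual norm, and the first equality uses $S_{11}=0$. This direction is elementary.

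Two points need care in the forward direction. The first is the degenerate case $M(Y_*)=0$. Then \eqref{eq:completion_thm} still requires $\lVert\varphi_S\rVert^*=1$, but the subgradient obtained may have dual norm strictly less than $1$. If $M(Y_*)=0$, then $A_{12}$, $A_{21}$, $A_{22}$ all vanish, so any nonzero $S$ of the form \eqref{eq:R_form}, rescaled to have dual norm $1$, satisfies both equalities trivially (both sides are $0$). Such an $S$ exists provided $s<m$ or $t<n$. If instead $s=m$ and $t=n$, there is no admissible nonzero $S$, so the statement implicitly excludes that trivial case, and I will note this.

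The second point is the justification of the affine chain rule. To avoid citing it, I can instead use separation. The convex set $\{Z:\lVert Z\rVert<\lVert M(Y_*)\rVert\}$ is disjoint from the affine subspace $\{M(Y)\}$. Hahn--Banach then supplies a functional $\varphi_S$ that is constant on the subspace, which forces $S_{11}=0$, and that is bounded by $\lVert M(Y_*)\rVert$ on the open ball. Normalizing gives $\lVert\varphi_S\rVert^*=1$ together with the required equality. I expect this separation step to be the main point needing care; the rest is bookkeeping.
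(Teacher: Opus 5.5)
Your proof is correct, but it reaches the forward direction by a different route than the paper.

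\textbf{The paper's route.} The paper uses the analytic Hahn--Banach extension theorem directly. It defines the functional $\beta z$ on the subspace $\mathbb{V}$ spanned by the $Y$-directions $\bigl[\begin{smallmatrix} Y & 0\\ 0 & 0\end{smallmatrix}\bigr]$ and the fixed point $M(0)$. It then uses minimality of $Y_*$ to bound the dual norm of this functional by $1$, extends it with the same norm, and reads off $S_{11}=0$ because the extension vanishes on the $Y$-directions.

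\textbf{Your route.} You treat the problem as unconstrained convex minimization of $Y\mapsto\lVert M(Y)\rVert$. You then combine two standard facts: the description of $\partial\lVert\cdot\rVert$, and the affine chain rule $\partial(\lVert\cdot\rVert\circ M)(Y_*)=\{S_{11} : S\in\partial\lVert\cdot\rVert(M(Y_*))\}$. Equivalently, you separate the open ball of radius $\beta=\lVert M(Y_*)\rVert$ from the affine set $\{M(Y)\}$.

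\textbf{Comparison.} Your version is shorter, given convex-analysis background. It also makes clear that the condition is just $0\in\partial$, the same viewpoint as Lemma~\ref{lem:opt} later in the paper. The paper's version is more self-contained: it needs only the extension theorem rather than subdifferential calculus, whose chain rule is itself proved via Hahn--Banach or separation. The converse direction is identical in both.

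\textbf{Degenerate case.} Your handling of $\beta=0$ is an actual improvement over the paper. The paper's step ``$\lVert\varphi\rVert^*=1$ since $\beta\le\lVert\varphi\rVert^*\beta$'' silently needs $\beta>0$, that is, $A_{12},A_{21},A_{22}$ not all zero. When $\beta=0$, one must do as you do: take any nonzero $S$ of the form \eqref{eq:R_form} and normalize it. When in addition $s=m$ and $t=n$, no such $S$ exists, so the statement as written fails in that trivial case, exactly as you observed.
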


\begin{proof}
Let $\beta \coloneqq \bigl \lVert \bigl[\begin{smallmatrix}
    Y_* & A_{12} \\
A_{21} & A_{22}
\end{smallmatrix}\bigr]  \bigr\rVert$. By its definition, $\beta \le \bigl \lVert \bigl[\begin{smallmatrix}
    Y & A_{12} \\
A_{21} & A_{22}
\end{smallmatrix}\bigr]  \bigr\rVert$ for all $Y \in \R^{s \times t}$. Consider the subspace
\[
\mathbb{V} = \biggl\{ \begin{bmatrix}
    Y &zA_{12}\\
    zA_{21} &zA_{22}
\end{bmatrix} \in \R^{m \times n} : Y \in \R^{s\times t},\, z \in \R  \biggr\}
\]
and the linear functional $\varphi : \mathbb{V} \to \mathbb{R}$ defined by
\[
\varphi\biggl(\begin{bmatrix} 
    Y &zA_{12}\\
    zA_{21} &zA_{22}
\end{bmatrix}\biggr) = \beta z
\]
for all $Y \in \R^{s\times t}$ and $z \in \R$. 
We must also have $\lVert \varphi\rVert^* \le 1$ as
\[
\biggl\lVert \begin{bmatrix}
    Y &zA_{12}\\
    zA_{21} &zA_{22}
\end{bmatrix} \biggr\rVert = \lvert z \rvert \biggl\lVert \begin{bmatrix}
    Y/z &A_{12}\\
    A_{21} & A_{22}
\end{bmatrix} \biggr\rVert \ge \lvert  \beta z\rvert =  \biggl\lvert \varphi \biggl( \begin{bmatrix} 
    Y &zA_{12}\\
    zA_{21} &zA_{22}
\end{bmatrix} \biggr) \biggr\rvert
\]
for all $Y\in \R^{s \times t}$  and $0 \ne z \in \R$.
In fact, $\lVert  \varphi\rVert^* = 1$ since $\beta =  \varphi\bigl(  \bigl[\begin{smallmatrix}
    Y_* & A_{12} \\
A_{21} & A_{22}
\end{smallmatrix}\bigr] \bigr) \le \lVert \varphi \rVert^* \beta$. By Hahn--Banach Theorem, $\varphi$ can be extended to a linear functional on $\R^{m\times n}$ with the same norm. By Riesz representation, any such linear functional must take the form $\varphi_S$ for some $S  \in \R^{m\times n}$.  So  $\lVert \varphi_S\rVert^* = 1$ and $\varphi_S \bigr\rvert_{\mathbb{V}} =\varphi$, i.e.,
\[
\varphi_S\biggl( \begin{bmatrix}
    Y &zA_{12}\\
    zA_{21} &zA_{22}
\end{bmatrix} \biggr) =
\tr\biggl( S^\tp \begin{bmatrix}
    Y &zA_{12}\\
    zA_{21} &zA_{22}
\end{bmatrix} \biggr)=\beta z
\]
for all $Y\in \R^{s \times t}$ and $z \in \R$. Since 
$
\tr \bigl( S^\tp 
\bigl[\begin{smallmatrix}
    Y &0\\
    0 &0
\end{smallmatrix}\bigr] \bigr) = 0
$
for all $Y \in \R^{s \times t}$, the matrix $S$ must be of the form \eqref{eq:R_form} and satisfies \eqref{eq:completion_thm}.

Now, suppose that there exists $S\in \R^{m \times n}$ of the form \eqref{eq:R_form} such that $\lVert \varphi_S \rVert^*=1$ and satisfies \eqref{eq:completion_thm}. Then for any $Y\in \R^{s \times t}$,
\begin{align*}
    \biggl\lVert  \begin{bmatrix}
        Y_* & A_{12} \\
    A_{21} & A_{22}
    \end{bmatrix}  \biggr\rVert &= \biggl\lvert \tr\biggl(S^\tp  \begin{bmatrix}
        Y_* & A_{12} \\
    A_{21} & A_{22}
    \end{bmatrix} \biggr) \biggr\rvert \\
    &= \biggl\lvert \tr\biggl(S^\tp  \begin{bmatrix}
        Y & A_{12} \\
    A_{21} & A_{22}
    \end{bmatrix} \biggr) \biggr\rvert\le \lVert S\rVert^* \biggl\lVert  \begin{bmatrix}
        Y & A_{12} \\
    A_{21} & A_{22}
    \end{bmatrix}  \biggr\rVert = \biggl\lVert  \begin{bmatrix}
        Y & A_{12} \\
    A_{21} & A_{22}
    \end{bmatrix}  \biggr\rVert. \qedhere
\end{align*}
\end{proof}
An alternative way, perhaps slightly shorter, to prove Proposition~\ref{prop:complete} would be to deduce it from \cite[Theorem~1.1]{Singer}; but we prefer a direct proof via the much better known Hahn--Banach Theorem.

With Proposition~\ref{prop:complete} in place, we may now construct an example to show that Mirsky Theorem \cite[Theorem~2]{mir} does not extend to the generalized nearness problem.
\begin{example}\label{example:completion}
Consider the best rank-one generalized matrix nearness problem with
\[
A = \begin{bmatrix}
    0 &1 \\
    1 &1
\end{bmatrix}, \qquad B = \begin{bmatrix}
    -1 &0 \\0 &0
\end{bmatrix}, \qquad C = \begin{bmatrix}
    1 &0 \\0 &0
\end{bmatrix},
\]
which gives\
\begin{equation}\label{eq:con}
\min_{\rank(X) \le 1} \; \biggl\lVert \begin{bmatrix}
    0 & 1 \\
    1 & 1
\end{bmatrix} - \begin{bmatrix}
    -1 &0 \\0 &0
\end{bmatrix} X \begin{bmatrix}
    1 & 0 \\0 &0
\end{bmatrix}\biggr\rVert_{\st, p} = \min_{\rank(X) \le 1} \; \biggl\lVert \begin{bmatrix}
    x_{11} & 1 \\
    1 & 1
\end{bmatrix} \biggr\rVert_{\st, p}.
\end{equation}
In case this is not clear, in the minimization problem on the right of \eqref{eq:con}, the matrix $X$ in the rank constraint is \emph{not} the same matrix as the one in the norm objective, likewise in \eqref{eq:uncon} below. 
Recall that the dual norm of the Schatten $p$-norm is the Schatten $q$-norm where $1/p+1/q=1$. By Proposition~\ref{prop:complete}, we know that $X_*  = \bigl[\begin{smallmatrix}
x_* & y_* \\
z_* & w_*
\end{smallmatrix} \bigr]$ is a global minimizer of
\begin{equation}\label{eq:uncon}
\min_{X \in \R^{2 \times 2}} \; \biggl\lVert \begin{bmatrix}
    x_{11} & 1 \\
    1 & 1
\end{bmatrix} \biggr\rVert_{\st, p}
\end{equation}
for $p \in \{1, 2,\infty\}$ if there exists $S_p = \bigl[\begin{smallmatrix}
    0 & b_p \\
c_p & d_p
\end{smallmatrix} \bigr]$ such that $\lVert \varphi_{S_p} \rVert_{\st, q}=1$ 
and 
\[
\tr(S_p^\tp X_*) = 
\tr\biggl(\begin{bmatrix}
    0 & b_p \\
c_p & d_p
\end{bmatrix}^\tp \begin{bmatrix}
    x_* &1\\ 1 &1
\end{bmatrix} \biggr) = \biggl\lVert \begin{bmatrix}
    x_* &1\\ 1 &1
\end{bmatrix} \biggr\rVert_{\st, p}.
\]
We now show that the minima in  \eqref{eq:con} and \eqref{eq:uncon} are the same, i.e., the minimum in \eqref{eq:uncon} can be attained with rank-one matrices. For $p=1$, notice that  
\[
    \biggl\lVert \begin{bmatrix}
        1 &1\\ 1 &1
    \end{bmatrix} \biggr\rVert_{\st,1} = \tr\biggl(\begin{bmatrix}
        0 &1\\
        1 &0
    \end{bmatrix} \begin{bmatrix}
        1 &1\\ 1 &1
    \end{bmatrix} \biggr) =2, \quad \text{where } S_1 = \begin{bmatrix}
        0 &1\\
         1 &0
     \end{bmatrix}
\]
satisfies $\lVert S_1\rVert_{\st,\infty} =1$. So any rank-one matrix of the form $X_*  = \bigl[\begin{smallmatrix}
1 & y_* \\
z_* & w_*
\end{smallmatrix} \bigr]$ is a solution to \eqref{eq:uncon}.

For $p=2$, notice that  
\[
    \biggl\lVert \begin{bmatrix}
        0 &1\\ 1 &1
    \end{bmatrix} \biggr\rVert_{\st,2} = \tr\Biggl(\begin{bmatrix}
    0 &\frac{1}{\sqrt{3}}\\
    \frac{1}{\sqrt{3}} &\frac{1}{\sqrt{3}}
\end{bmatrix} \begin{bmatrix}
    0 &1\\ 1 &1
\end{bmatrix} \Biggr) = \sqrt{3}, \quad \text{where }  S_2 = \begin{bmatrix}
    0 &\frac{1}{\sqrt{3}}\\
    \frac{1}{\sqrt{3}} &\frac{1}{\sqrt{3}}
\end{bmatrix}
\]
satisfies $\lVert S_2\rVert_{\st,2} =1$. So any rank-one matrix of the form $X_*  = \bigl[\begin{smallmatrix}
0 & y_* \\
z_* & w_*
\end{smallmatrix} \bigr]$ is a solution to \eqref{eq:uncon}.

Similarly, for $p=\infty$, 
\[
    \biggl\lVert \begin{bmatrix}
        -1 &1\\ 1 &1
    \end{bmatrix} \biggr\rVert_{\st,\infty} = \tr\Biggl(\begin{bmatrix}
        0 &\frac{1}{2\sqrt{2}}\\
        \frac{1}{2\sqrt{2}} &\frac{1}{\sqrt{2}}
    \end{bmatrix} \begin{bmatrix}
        -1 &1\\ 1 &1
    \end{bmatrix} \Biggr) = \sqrt{2} \quad \text{where } S_\infty = \begin{bmatrix}
        0 &\frac{1}{2\sqrt{2}}\\
        \frac{1}{2\sqrt{2}} &\frac{1}{\sqrt{2}}
    \end{bmatrix}
\]
satisfies $\lVert S_\infty\rVert_{\st,1} =1$. So any rank-one matrix of the form $X_*  = \bigl[\begin{smallmatrix}
-1 & y_* \\
z_* & w_*
\end{smallmatrix} \bigr]$ is a solution to \eqref{eq:uncon}.

The minimizers for the $p =1$, $2$, $\infty$ cases are all distinct. We conclude that the solution of
\[
\min_{\rank(X) \le r} \lVert  A- BXC \rVert_{\st,p}
\]
depend on the value of $p$; Mirsky Theorem does not hold here.
\end{example}

\section{Iterative solutions}\label{sec:iterative}

Given that we do not have closed form solutions for norms other than the Frobenius norm, we now turn to iterative solutions. As mentioned in Section~\ref{sec:algo}, our proposed iterative algorithm is very versatile: it works with any of the constraints \ref{it:rank}--\ref{it:pos} listed on p.~\pageref{it:pos} (originally in \cite{li22}); it works with the Kronecker rank constraint in Section~\ref{sec:Kronecker} and partial trace constraint in Section~\ref{sec:partial-trace} --- new constraints not considered in \cite{li22}; it also works with a combination of any of these constraints; and in all cases it works with the more general affine objective function in Section~\ref{sec:affine}.

The iterative algorithm in question is Algorithm~\ref{alg:Schatten}, and it has convergence guarantees: (a) to the global minimizer, (b) from any starting point, (c) at sublinear rate. In fact these hold true more generally for any subdifferentiable $f: \R^{m \times n} \to \R$ that attains the minimum 
\begin{equation}\label{eq:coercive}
    \min_{X \in \mathcal{S}} \; f(A - BXC).
\end{equation}
Since treating this more general case requires no extra effort, we will establish our convergence results in Section~\ref{sec:conv} for any subdifferentiable loss function $f$ in a corresponding Algorithm~\ref{alg:general} that reduces to Algorithm~\ref{alg:Schatten} when $f = \lVert\,\cdot\,\rVert_{\st, p}$.

A word about the sublinear convergence in case it gives the reader pause: Such is the reality of working with norms that are not strongly convex. Note that the Schatten norm $\lVert\,\cdot\,\rVert_{\st, p}$ is convex but not strictly convex for $p =1$ and $\infty$; it is strictly but not strongly convex for all $p \in (1,2)\cup (2,\infty)$; and it is strongly convex for the unique value $p = 2$, the Frobenius norm. Strong convexity is the key ingredient required for linear convergence in \cite[Theorem~4.4]{li22} and we lack that here. And we are not alone: sublinear convergence is typical of works involving $\lVert\,\cdot\,\rVert_{\st, 1}$, the nuclear norm \cite{sublinear2, sublinear1}; without additional assumptions like the Kurdyka--\L ojasiewicz property (which does not hold in our case), a sublinear rate is the best one can do \cite{strongconvex}. Our silver lining is that, in practice, the numerical experiments in Section~\ref{sec:experiments} indicate near linear convergence in Algorithm~\ref{alg:Schatten}.

\subsection{Convergence analysis}\label{sec:conv}

The idea that motivates our algorithm for \eqref{eq:coercive} dates back to \cite{Dyk}. To solve \eqref{eq:coercive} for a general $f$ given that we already know how to solve it for the special $f = \lVert\,\cdot\,\rVert_\F$, a conceivable strategy is to alternate between two subproblems:
\[
\min_{\lVert Y-M \rVert_\F \le \frac{\mu}{2}} f(Y) \qquad \text{and} \qquad \min_{X \in \mathcal{S}} \; \lVert Y - (A - BXC) \rVert_\F
\]
for fixed $\mu > 0 $ and $M \in \R^{m\times n}$. The alternation trades off between minimizing $f$ and staying close to $\mathcal{S}$. However, this simplistic approach does not work: the iterates could end up oscillating between two fixed points. Fortunately, there is also a simple fix: introducing a \emph{Dykstra correction} term $\Delta \in \R^{m \times n}$ \cite{Dyk}.  The result is Algorithm~\ref{alg:general}.

\begin{algorithm}
\caption{Dykstra iteration} \label{alg:general}
\begin{algorithmic}[1]
\Require $f: \R^{m \times n} \to \R$, $ A \in \R^{m \times n}$, $B \in \R^{m \times p}$, $C \in \R^{q \times n} $, $\mu > 0$, $\epsilon > 0$;
\State initialize $X_0, Y_0, \Delta_0, k = 0$;
\State solve for
\begin{equation}\label{eq:X_step}
    X_{k+1} \in \argmin \bigl\{ \lVert Y_{k} - A + BXC - \Delta_k \rVert_\F^2 : X \in \mathcal{S}\bigr\};
\end{equation}
\label{line:X_step}
\State solve for
\begin{equation}\label{eq:Y_step}
    Y_{k+1} \in \argmin  \bigl\{ f(Y) + \tfrac{\mu}{2} \lVert Y - A + BX_{k+1}C - \Delta_k \rVert_\F^2 : Y \in \R^{m \times n}\bigr\};
\end{equation}
\label{line:Y_step}
\State compute 
\begin{equation}\label{eq:D_step}
    \Delta_{k+1} = \Delta_k - Y_{k+1} + A - BX_{k+1}C;
\end{equation}
\State $k = k+1$ and go to line~\ref{line:X_step}.
\end{algorithmic}
\end{algorithm}

The $X_k$-update step \eqref{eq:X_step} is a generalized matrix nearness problem in the Frobenius norm. We may solve it for any $\mathcal{S}$ defined by the constraints~\ref{it:rank}--\ref{it:pos} or a mixture of them via the methods in \cite{li22}, the Kronecker rank constraint by Proposition~\ref{prop:Kronecker-vecb}, and the prescribed partial trace constraints by Proposition~\ref{prop:partial-trace}. Algorithm~\ref{alg:general} may be extended to \eqref{eq:coercive} with an extra affine term:
\[
    \min_{X \in \mathcal{S}} \; f(A - BXC - dx^\tp E) \qquad \text{and} \qquad     \min_{X \in \mathcal{S}} \; f(A - BXC - Dxe^\tp)
\]
by applying Proposition~\ref{prop:affine} to the resulting minimization \eqref{eq:X_step}, which now takes the form in \eqref{eq:affine}.

The $Y_k$-update step \eqref{eq:Y_step} for an arbitrary loss function $f$ is often called a proximal operator in the optimization literature \cite{proximal}. However our goal is to avoid using general purpose optimization methods altogether (otherwise we might as well just solve \eqref{eq:coercive} as a nonlinear optimization problem), ideally without computing a single gradient or subgradient.\footnote{We use them in statements and proofs but never in actual algorithms.} We will show in Section~\ref{sec:Schatten} that for $f = \lVert \, \cdot \, \rVert_{\st,p}$, we have closed-form solutions for $Y_{k+1}$.

We now prove that the iterates generated by Algorithm~\ref{alg:general} always converge at a sublinear rate to the global solution of \eqref{eq:coercive} for any initialization. We require the following two lemmas.
\begin{lemma}[Optimality condition]\label{lem:opt}
Let $f: \R^{m \times n} \to \R$ be subdifferentiable and attain its minimum on $\widehat{\mathcal{S}} \coloneqq \{A - BXC \in \R^{m \times n} : X \in \mathcal{S}\}$. Then $X_* \in \R^{p \times q}$ is a minimizer of \eqref{eq:coercive} if and only if there exists $G_* \in \partial f(A - BX_* C)$ such that
\begin{equation}\label{eq:min}
\tr\bigl(G_*^{\tp}  B(X_* - X)C \bigr) \ge 0 \quad \text{for all } X \in \mathcal{S}.
\end{equation}
\end{lemma}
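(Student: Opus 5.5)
The plan is to treat the two directions separately. Sufficiency follows directly from the definition of the subdifferential given in the conventions. Necessity is a first-order optimality argument for the composite function $h(X) \coloneqq f(A - BXC)$ over $\mathcal{S}$. Throughout, $\partial f$ is the global (convex) subdifferential defined earlier. Since $f$ is assumed subdifferentiable, $\partial f(Z)\neq\varnothing$ at every $Z$. This forces $f$ to coincide with its convex envelope and hence to be convex, and I will use that freely.

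\emph{Sufficiency.} Suppose $G_* \in \partial f(A - BX_*C)$ satisfies \eqref{eq:min}. Fix $X \in \mathcal{S}$ and apply the subgradient inequality at the point $A - BX_*C$ with $Y = A - BXC$:
\[
f(A - BXC) - f(A - BX_*C) \ge \tr\bigl(G_*^\tp (BX_*C - BXC)\bigr) = \tr\bigl(G_*^\tp B(X_* - X)C\bigr) \ge 0 .
\]
So $X_*$ minimizes \eqref{eq:coercive}. This direction needs no structure on $\mathcal{S}$ at all.

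\emph{Necessity.} Let $X_*$ be a minimizer. I would first compute $\partial h(X_*)$ by the affine chain rule: because $X \mapsto A - BXC$ is affine and $f$ is a finite convex function on all of $\R^{m\times n}$, no qualification condition is needed, and
\[
\partial h(X_*) = \{-B^\tp G C^\tp : G \in \partial f(A - BX_*C)\}.
\]
Next, write the constrained problem as the unconstrained minimization of $h + \iota_{\mathcal{S}}$, where $\iota_{\mathcal{S}}$ is the indicator function of $\mathcal{S}$. Fermat's rule gives $0 \in \partial(h+\iota_{\mathcal{S}})(X_*)$. Since $h$ is finite and continuous everywhere, the Moreau--Rockafellar sum rule applies and yields
\[
0 \in \partial h(X_*) + N_{\mathcal{S}}(X_*).
\]
Hence there is $G_* \in \partial f(A - BX_*C)$ with $B^\tp G_* C^\tp \in N_{\mathcal{S}}(X_*)$. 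Unwinding the normal cone, this says $\tr\bigl((B^\tp G_* C^\tp)^\tp (X - X_*)\bigr) \le 0$ for all $X \in \mathcal{S}$. By cyclicity of trace, this is exactly \eqref{eq:min}.

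\emph{Main obstacle.} The step I expect to be hardest is the use of the sum rule and the normal cone, which presupposes that $\mathcal{S}$ is convex. That holds for the norm, product, symmetry, structure and positivity constraints, but not for the rank, Kronecker-rank or spectral constraints. For nonconvex $\mathcal{S}$, I would first try a direct separation argument avoiding the normal cone. Consider the convex set $\{(Z,t) : t \ge f(Z)\}$ and the set $\widehat{\mathcal{S}}\times\{f(A-BX_*C)\}$. Minimality says these touch only from below, and I would try to separate them, or $\conv\widehat{\mathcal{S}}$, by a supporting hyperplane whose slope is some $G_*$ in $\partial f(A-BX_*C)$. If that fails, it is because minimality over $\widehat{\mathcal{S}}$ need not imply minimality over $\conv\widehat{\mathcal{S}}$. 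In that case I would state the necessity direction under the hypothesis that $X_*$ also minimizes $h$ over $\conv\mathcal{S}$, or that $\mathcal{S}$ is convex. The sufficiency direction, which is the one the convergence proof actually relies on to certify global optimality of limit points, remains valid in full generality.
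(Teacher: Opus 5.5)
Your sufficiency argument is correct and more direct than the paper's. Your necessity argument is correct for convex $\mathcal{S}$, and there it is the paper's argument moved from $Y$-space to $X$-space: the paper adds the indicator $\eta$ of $\widehat{\mathcal{S}}$, writes $0\in\partial(f+\eta)(Y_*)=\partial f(Y_*)+\partial\eta(Y_*)$, and pulls back along the surjection $X\mapsto A-BXC$ instead of using the affine chain rule.

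The obstacle you flag for nonconvex $\mathcal{S}$ is a genuine gap. No separation argument will close it, because the necessity direction is false there. Take $A=B=C=I_2$, $f=\lVert\cdot\rVert_\F$, and the rank constraint $\mathcal{S}=\{X\in\R^{2\times 2}:\rank(X)\le 1\}$.
\begin{itemize}
\item By Eckart--Young every minimizer $X_*$ has value $1$. So $Y_*=I_2-X_*\neq 0$, and hence $0\notin\partial f(Y_*)$.
\item On the other hand, $tZ\in\mathcal{S}$ for every rank-one $Z$ and every $t\in\R$. So \eqref{eq:min} reads $\tr(G_*^\tp X_*)\ge t\,\tr(G_*^\tp Z)$ for all $t$.
\item This forces $\tr(G_*^\tp Z)=0$ for every rank-one $Z$. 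Since rank-one matrices span $\R^{2\times 2}$, $G_*=0$, a contradiction.
\end{itemize}
The paper's proof has exactly the same gap, hidden in the equality $\partial(f+\eta)(Y_*)=\partial f(Y_*)+\partial\eta(Y_*)$. For the global subdifferential only ``$\supseteq$'' holds in general, and that inclusion is precisely the sufficiency direction. Equality is the Moreau--Rockafellar rule, which requires $\eta$, i.e.\ $\widehat{\mathcal{S}}$, to be convex.

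The right repair is the one you propose. The set the paper computes as $\partial\eta(Y_*)$ is really the normal cone of $\conv\widehat{\mathcal{S}}$. Concretely, \eqref{eq:min} is linear in $X$, so it extends to all $X\in\conv\mathcal{S}$. Combined with the subgradient inequality, it shows that $X_*$ minimizes $f(A-BXC)$ over $\conv\mathcal{S}$. Conversely, your convex argument applied to $\conv\mathcal{S}$ produces $G_*$. Thus \eqref{eq:min} is equivalent to minimality over $\conv\mathcal{S}$. You should drop the separation attempt and state necessity under that hypothesis, or for convex $\mathcal{S}$.

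One correction to your last sentence: the convergence proof does not use sufficiency alone. Lemma~\ref{lem:sufficient_descent} supplies the monotonicity and boundedness at the start of the proof of Theorem~\ref{thm:conv}. It presupposes a minimizer $Y_*$ with some $\Delta_*\in\partial f(Y_*)$ satisfying \eqref{eq:D_gradient}, and producing that pair is exactly the necessity direction. In the example above, the same argument shows no such pair exists for any minimizer. So for nonconvex $\mathcal{S}$ the convergence argument loses its starting point as well.
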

\begin{proof}
Let $\eta: \R^{m \times n} \to \R \cup \{\infty\}$ be the indicator function given by
\[
\eta(Y) = \begin{dcases}
    0 & Y \in \widehat{\mathcal{S}},\\
    \infty & Y \not\in \widehat{\mathcal{S}}.
\end{dcases}
\]
We rewrite \eqref{eq:coercive} as
\[\min_{X \in \mathcal{S}} \; f(A - BXC) = \min_{Y \in \R^{m \times n}}  f(Y) + \eta(Y).\]
Since $\eta$ is subdifferentiable:
\[
0 \in \partial \eta(Y) = \{G \in \R^{m \times n} : \tr\bigl(G^\tp (Z-Y)\bigr) \le 0 \text{ for all } Z \in \widehat{\mathcal{S}} \}
\]
for all $Y \in \widehat{\mathcal{S}}$, the sum $f + \eta$ is subdifferentiable on $\widehat{\mathcal{S}}$. The function $f + \eta$ attains its minimum at $Y_*$ if and only if $0 \in \partial (f + \eta)(Y_*) = \partial f(Y_*) + \partial \eta(Y_*)$, i.e., there exists $G \in \partial f(Y_*)$ such that $-G \in \partial \eta(Y_*)$. This is equivalent to the existence of $G_* \in \partial f(Y_*)$ with
\begin{equation}\label{eq:minimizer}
    \tr\bigl(G^\tp (Y-Y_*) \bigr)\ge 0 \quad \text{for all } Y \in \widehat{\mathcal{S}}.
\end{equation}
A matrix $X_*$ minimizes \eqref{eq:coercive} if and only if $Y_* = A - BX_*C$ minimizes $f+ \eta$. Since $\widehat{\mathcal{S}}$ is the surjective image of $\mathcal{S}$ under $X \mapsto A-BXC$, \eqref{eq:minimizer} is equivalent to \eqref{eq:min}.
\end{proof}

We next demonstrate sufficient descent in Algorithm~\ref{alg:general}.
\begin{lemma}[Sufficient descent]\label{lem:sufficient_descent}
Let $Y_* = A - BX_* C$ minimize \eqref{eq:coercive} with $\Delta_* \in \partial f(Y_*)$ such that 
\begin{equation}\label{eq:D_gradient}
    \tr\bigl(\Delta_*^\tp B(X_*-X)C \bigr)\ge 0 \quad \text{for all } X \in \mathcal{S}.
\end{equation}
Then the iterate $(Y_k,\Delta_k)$ of Algorithm~\ref{alg:general} satisfies
\begin{align}
    \lVert Y_k-Y_* \rVert_\F^2 + \lVert \Delta_k-\Delta_* \rVert_\F^2 - \lVert Y_{k+1} -Y_* \rVert_\F^2 &- \lVert \Delta_{k+1} - \Delta_* \rVert_\F^2 \notag \\
&\ge \lVert Y_k - Y_{k+1} \rVert_\F^2  + \lVert \Delta_k - \Delta_{k+1} \rVert_\F^2. \label{eq:sufficient_descent}
\end{align}
\end{lemma}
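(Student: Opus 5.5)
The plan is to combine three inequalities: one from each update step and one from the optimality of $(Y_*,\Delta_*)$. I will then assemble them with the polarization identity $2\tr\bigl((a-b)^\tp(a-c)\bigr)=\lVert a-b\rVert_\F^2+\lVert a-c\rVert_\F^2-\lVert b-c\rVert_\F^2$. Throughout I write $Z_{k+1}\coloneqq A-BX_{k+1}C\in\widehat{\mathcal{S}}$. Then \eqref{eq:D_step} reads $Z_{k+1}=Y_{k+1}+\Delta_{k+1}-\Delta_k$. The statement carries no $\mu$, so I normalize $\mu=1$; the general case follows by replacing $f$ with $f/\mu$, which rescales the subgradient $\Delta_*$ accordingly.

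\emph{Step 1 ($Y$-step).} The first-order condition for \eqref{eq:Y_step} is $0\in\partial f(Y_{k+1})+Y_{k+1}-Z_{k+1}-\Delta_k$. Using \eqref{eq:D_step}, this says exactly that $\Delta_{k+1}\in\partial f(Y_{k+1})$. Monotonicity of $\partial f$, obtained by adding the two subgradient inequalities at $Y_{k+1}$ and at $Y_*$, then gives
\[
\tr\bigl((\Delta_{k+1}-\Delta_*)^\tp(Y_{k+1}-Y_*)\bigr)\ge 0 .
\]

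\emph{Step 2 (optimality of $Y_*$).} Taking $X=X_{k+1}$ in \eqref{eq:D_gradient} and noting $B(X_*-X_{k+1})C=Z_{k+1}-Y_*$, I get $\tr\bigl(\Delta_*^\tp(Z_{k+1}-Y_*)\bigr)\ge0$.

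\emph{Step 3 ($X$-step).} By \eqref{eq:X_step}, $Z_{k+1}$ is a nearest point of $\widehat{\mathcal{S}}$ to $W_k\coloneqq Y_k-\Delta_k$. I will use this in the form of the projection inequality
\[
\tr\bigl((W_k-Z_{k+1})^\tp(Y_*-Z_{k+1})\bigr)\le 0 .
\]
This is valid when $\widehat{\mathcal{S}}$ is convex. This step is the main obstacle. For the nonconvex constraints (rank, Kronecker rank) only the weaker inequality $\lVert W_k-Z_{k+1}\rVert_\F\le\lVert W_k-Y_*\rVert_\F$ is available. I would first try to carry the argument through using only that distance inequality, expanded by polarization. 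Failing that, I would state the lemma under the convexity assumption implicit in Lemma~\ref{lem:opt}, where the subdifferential of the indicator $\eta$ was used as a normal cone.

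\emph{Step 4 (assembly).} I substitute $Z_{k+1}=Y_{k+1}+\Delta_{k+1}-\Delta_k$ and $W_k=Y_k-\Delta_k$ into Steps 2 and 3 and add the three inequalities. All cross terms in $\Delta_*$ and $Y_*$ should collapse into $\tr\bigl((Y_k-Y_{k+1})^\tp(Y_{k+1}-Y_*)\bigr)+\tr\bigl((\Delta_k-\Delta_{k+1})^\tp(\Delta_{k+1}-\Delta_*)\bigr)\ge0$, possibly with an extra nonnegative term of the form $\tr\bigl((\Delta_{k+1}-\Delta_k)^\tp(Y_{k+1}-Y_k)\bigr)$. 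Such a term can be controlled by applying monotonicity of $\partial f$ between consecutive iterates, using $\Delta_k\in\partial f(Y_k)$ from Step 1 at the previous step; this holds for $k\ge1$, and the initialization can be chosen so that it holds for $k=0$. Finally, applying polarization to each trace, e.g. $2\tr\bigl((Y_k-Y_{k+1})^\tp(Y_{k+1}-Y_*)\bigr)=\lVert Y_k-Y_*\rVert_\F^2-\lVert Y_{k+1}-Y_*\rVert_\F^2-\lVert Y_k-Y_{k+1}\rVert_\F^2$, and likewise for $\Delta$, turns the inequality into exactly \eqref{eq:sufficient_descent}. The bookkeeping in this step is routine once the signs are tracked carefully.
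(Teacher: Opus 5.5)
Your plan follows the paper's proof. The paper also gets $\mu\Delta_{k+1}\in\partial f(Y_{k+1})$ from the $Y$-step, uses monotonicity between consecutive iterates in \eqref{eq:gradient_sp2}, adds the $X$-step inequality at $X=X_*$ to \eqref{eq:D_gradient} at $X=X_{k+1}$, and finishes by polarization. Your Step~4 does close: adding Steps~1--3 gives exactly
\[
\tr\bigl((Y_k-Y_{k+1})^\tp(Y_{k+1}-Y_*)\bigr)+\tr\bigl((\Delta_k-\Delta_{k+1})^\tp(\Delta_{k+1}-\Delta_*)\bigr)\ge\tr\bigl((\Delta_{k+1}-\Delta_k)^\tp(Y_{k+1}-Y_k)\bigr)\ge 0 .
\]
Both of your bookkeeping caveats are genuine, and the paper glosses over both.
\begin{itemize}
\item The Step~1 inequality $\tr\bigl((\Delta_{k+1}-\Delta_*)^\tp(Y_{k+1}-Y_*)\bigr)\ge0$, which the paper uses without stating it, needs $\mu\Delta_*\in\partial f(Y_*)$ rather than $\Delta_*\in\partial f(Y_*)$. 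With the literal hypothesis the lemma fails for $\mu\ne1$ even on a convex set. Take $A=1$, $B=1$, $C=-1$, $\mathcal{S}=[0,\infty)$, $f(y)=y^2/2$, $\mu=2$, $\Delta_*=1$ and $(Y_0,\Delta_0)=(\tfrac{4}{3},\tfrac{2}{3})$. Then $(Y_1,\Delta_1)=(\tfrac{10}{9},\tfrac{5}{9})$, and the two sides are $\tfrac{1}{81}<\tfrac{5}{81}$.
\item The case $k=0$ needs $\mu\Delta_0\in\partial f(Y_0)$.
\end{itemize}

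The substantive issue is your Step~3, and you located it correctly. The paper's counterpart is \eqref{eq:k_inequality}, which is exactly your projection inequality. The paper obtains it from the distance inequality by substituting \eqref{eq:D_step}. But $Y_k-A+BXC-\Delta_k=(Y_k-Y_{k+1}-\Delta_{k+1})+B(X-X_{k+1})C$, so that substitution only yields
\[
2\tr\bigl((Y_k-Y_{k+1}-\Delta_{k+1})^\tp B(X-X_{k+1})C\bigr)\ge-\lVert B(X-X_{k+1})C\rVert_\F^2 .
\]
Improving the right side to $0$ requires moving $A-BXC$ along segments toward $A-BX_{k+1}C$, that is, convexity of $\widehat{\mathcal{S}}$.

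So your first fallback, working from the distance inequality alone, cannot succeed: the lemma is false without convexity, already for the rank constraint. Take $A=0$, $B=C=I_2$, $\mathcal{S}=\{X\in\R^{2\times2}:\rank(X)\le1\}$, $f\equiv0$ and $\mu=1$. Choose $Y_*=\diag(0,1)$ and $\Delta_*=0$; every feasible point minimizes, and \eqref{eq:D_gradient} holds trivially. Start from $Y_0=\diag(1,\tfrac{1}{2})$ and $\Delta_0=0$. The $X$-step gives $A-BX_1C=\diag(1,0)$, hence $Y_1=\diag(1,0)$ and $\Delta_1=0$. The left side of \eqref{eq:sufficient_descent} is $\tfrac{5}{4}-2=-\tfrac{3}{4}$, while the right side is $\tfrac{1}{4}$.

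Your second fallback is therefore the correct statement. Assume $\widehat{\mathcal{S}}$ convex (for instance $\mathcal{S}$ convex), $\mu\Delta_*\in\partial f(Y_*)$ (or $\mu=1$), and $\mu\Delta_0\in\partial f(Y_0)$. Under these hypotheses your argument is a complete proof and is more careful than the one in the paper.
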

\begin{proof} 
For each $k$, it follows from \eqref{eq:Y_step} and \eqref{eq:D_step} that $\mu \Delta_k \in \partial f(Y_k)$.
Since $\mu \Delta_k \in \partial f(Y_k)$ and $\mu \Delta_{k+1} \in \partial f(Y_{k+1})$,
\begin{equation}\label{eq:gradient_sp2}
\begin{split}
    \tr\bigl((\mu \Delta_k- \mu \Delta_{k+1})^\tp (Y_k-Y_{k+1})\bigr) &= \tr\bigl(\mu \Delta_k^\tp (Y_k-Y_{k+1})\bigr) - \tr\bigl(\mu \Delta_{k+1}^\tp (Y_k-Y_{k+1})\bigr) \\
&= \tr\bigl(\mu \Delta_k^\tp (Y_k-Y_{k+1})\bigr) + \tr\bigl(\mu \Delta_{k+1}^\tp (Y_{k+1}-Y_k)\bigr) \\
&\ge f(Y_k) - f(Y_{k+1}) + f(Y_{k+1}) - f(Y_k) = 0.
\end{split}
\end{equation}
By \eqref{eq:X_step}, $\lVert Y_{k+1} - A + BXC - \Delta_k \rVert_\F^2 - \lVert Y_{k+1} - A + BX_{k+1}C - \Delta_k \rVert_\F^2 \ge 0$ for all $X \in \mathcal{S}$. By
\begin{equation}\label{eq:factorization}
    \lVert a - c \rVert_\F^2 + \lVert b - c \rVert_\F^2 = 2 \tr\bigl((a-c)^\tp (a-b) \bigr) - \lVert a - b \rVert_\F^2,
\end{equation}
we get $2\tr\bigl((Y_k-A+BXC-\Delta_k)^\tp B(X-X_{k+1})C\bigr) \ge \lVert B(X-X_{k+1})C \rVert_\F^2$.
Plugging in \eqref{eq:D_step} gives
\begin{equation}\label{eq:k_inequality}
    \tr\bigl((Y_k - Y_{k+1} - \Delta_{k+1})^\tp B(X-X_{k+1})C\bigr) \ge 0.
\end{equation}
Taking $X = X_*$ and adding to \eqref{eq:D_gradient}, we get
$\tr\bigl((Y_k - Y_{k+1} + \Delta_* - \Delta_{k+1})^\tp (Y_{k+1} - Y_* + \Delta_{k+1} - \Delta_k) \bigr) \ge 0$.
By \eqref{eq:D_step} and \eqref{eq:gradient_sp2},
\begin{align*}
\tr \bigl((\Delta_k-\Delta_{k+1})^\tp (\Delta_{k+1}-\Delta_*) \bigr) &+ \tr\bigl( (Y_{k+1}-Y_*)^\tp (Y_k-Y_{k+1}) \bigr)\\
&\ge \tr\bigl((\Delta_k-\Delta_{k+1})^\tp(Y_k-Y_{k+1})\bigr) \ge 0.
\end{align*}
By \eqref{eq:factorization} again, 
\begin{align*}
    \lVert Y_k-Y_* \rVert_\F^2 + \lVert \Delta_k-\Delta_* \rVert_\F^2 &- \lVert Y_{k+1} -Y_* \rVert_\F^2 - \lVert \Delta_{k+1} - \Delta_* \rVert_\F^2 \\
    &=\begin{multlined}[t] \lVert Y_k - Y_{k+1} \rVert_\F^2  + \lVert \Delta_k - \Delta_{k+1} \rVert_\F^2 \\
+ 2\tr\bigl((Y_k - Y_{k+1})^\tp (Y_{k+1}-Y_*) +(\Delta_k - \Delta_{k+1})^\tp (\Delta_{k+1}-\Delta_*)\bigr) \end{multlined} \\
    &\ge \lVert Y_k - Y_{k+1} \rVert_\F^2  + \lVert \Delta_k - \Delta_{k+1} \rVert_\F^2. \qedhere
\end{align*}
\end{proof}

We now arrive at the required convergence result.
\begin{theorem}[Global, provable, and sublinear convergence]\label{thm:conv}
Let $f: \R^{m \times n} \to \R$ be subdifferentiable and attain its minimum on $\widehat{\mathcal{S}} \coloneqq \{A - BXC \in \R^{m \times n} : X \in \mathcal{S}\}$. For any $\mu > 0$, the iterates $(Y_k, \Delta_k)$ in Algorithm~\ref{alg:general} will converge to $(Y_*, \Delta_*)$ such that $Y_* = A - BX_*C$ for some $X_* \in \R^{s \times t}$ that minimizes \eqref{eq:coercive} and $\Delta_* \in \partial f(Y_*)$ that satisfies \eqref{eq:D_gradient}. The convergence is sublinear as
\begin{equation}\label{eq:sublin}
\frac{\lVert Y_{k+1} -Y_* \rVert_\F^2 + \lVert \Delta_{k+1} - \Delta_* \rVert_\F^2}{\lVert Y_k -Y_* \rVert_\F^2 + \lVert \Delta_k - \Delta_* \rVert_\F^2} \le 1, \quad k =0, 1, 2,\dots.
\end{equation}
If $B$ has full column rank and $C$ has full row rank, then $X_*$ is unique and the iterates $X_k$ in Algorithm~\ref{alg:general} will converge to $X_*$.
\end{theorem}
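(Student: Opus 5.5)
The plan is the standard Fejér-monotonicity argument for Dykstra/ADMM-type schemes, built on Lemma~\ref{lem:opt} and Lemma~\ref{lem:sufficient_descent}. First fix any minimizer $X_*$ of \eqref{eq:coercive} and set $Y_* = A - BX_*C$. By Lemma~\ref{lem:opt} there is a subgradient $G_* \in \partial f(Y_*)$ satisfying \eqref{eq:min}. We take $\Delta_* = G_*$, rescaled by $1/\mu$ to match the convention $\mu\Delta_k \in \partial f(Y_k)$ used in the proof of Lemma~\ref{lem:sufficient_descent}; the scaling is harmless in \eqref{eq:D_gradient}, which is homogeneous. Lemma~\ref{lem:sufficient_descent} then applies to this pair $(Y_*,\Delta_*)$.

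\emph{Step 1: Fejér monotonicity.} Write $d_k \coloneqq \lVert Y_k - Y_*\rVert_\F^2 + \lVert \Delta_k - \Delta_*\rVert_\F^2$. Inequality \eqref{eq:sufficient_descent} gives $d_{k+1} \le d_k$, which is exactly \eqref{eq:sublin}. Telescoping yields
\[
\sum_{k} \bigl(\lVert Y_k - Y_{k+1}\rVert_\F^2 + \lVert \Delta_k - \Delta_{k+1}\rVert_\F^2\bigr) \le d_0 < \infty .
\]
Hence the successive differences tend to $0$, and the sequence $(Y_k,\Delta_k)$ is bounded.

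\emph{Step 2: cluster points are optimal pairs.} Extract a convergent subsequence $(Y_{k_j},\Delta_{k_j}) \to (\bar Y,\bar\Delta)$. By \eqref{eq:D_step}, $A - BX_{k+1}C = Y_{k+1} + \Delta_{k+1} - \Delta_k$, so $A - BX_{k_j}C \to \bar Y$. This shows $\bar Y$ lies in the closure of $\widehat{\mathcal S}$. I will need $\widehat{\mathcal S}$ to be closed, or else argue through the attained minimum; this is implicitly assumed for all the constraint sets considered. We then pass to the limit in three relations:
\begin{itemize}
\item the subgradient relation $\mu\Delta_k \in \partial f(Y_k)$, using closedness of the graph of $\partial f$ for convex $f$;
\item inequality \eqref{eq:k_inequality}, whose term $Y_k - Y_{k+1}$ tends to $0$, giving $\tr\bigl(\bar\Delta^\tp (\bar Y - (A-BXC))\bigr) \ge 0$ for all $X \in \mathcal S$;
\item the membership $\bar Y \in \widehat{\mathcal S}$, so that $\bar Y = A - B\bar X C$.
\end{itemize}
Lemma~\ref{lem:opt} then shows that $\bar X$ is a minimizer with $\bar\Delta$ satisfying \eqref{eq:D_gradient}.

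\emph{Step 3: full convergence.} Since $(\bar Y,\bar\Delta)$ is itself an admissible optimal pair, Lemma~\ref{lem:sufficient_descent} applies with it in place of $(Y_*,\Delta_*)$. So $\lVert Y_k-\bar Y\rVert_\F^2 + \lVert\Delta_k-\bar\Delta\rVert_\F^2$ is nonincreasing. It tends to $0$ along the subsequence, hence along the whole sequence. Renaming $(\bar Y,\bar\Delta)$ as $(Y_*,\Delta_*)$ finishes the main claim.

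\emph{Step 4: uniqueness and convergence of $X_k$.} If $B$ has full column rank and $C$ has full row rank, the map $X \mapsto BXC$ is injective, with left inverse $Z \mapsto B^\dagger Z C^\dagger$. Hence $X_k = B^\dagger(A - Y_k - \Delta_k + \Delta_{k-1})C^\dagger \to B^\dagger(A - Y_*)C^\dagger = X_*$. Uniqueness of $X_*$ here should be read as uniqueness given the limit $Y_*$; if $f$ has several minimizers on $\widehat{\mathcal S}$, only the one selected by the iteration is determined.

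The main obstacle is Step 2. The delicate points are closedness of $\widehat{\mathcal S}$ and of the subdifferential graph, and, since $\mathcal S$ may be nonconvex (rank constraints), checking that the limiting variational inequality is precisely what Lemma~\ref{lem:opt} requires. I would lean on Lemma~\ref{lem:opt} as stated, taking it as valid for the paper's constraint sets.
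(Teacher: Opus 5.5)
Your proposal is correct and follows the paper's proof essentially step for step: Fej\'er monotonicity from Lemma~\ref{lem:sufficient_descent}, a convergent subsequence, passage to the limit in \eqref{eq:k_inequality} combined with Lemma~\ref{lem:opt}, re-centering the Fej\'er quantity at the cluster point, and reading off $X_k$ from \eqref{eq:D_step}. You also make explicit several things the paper glosses over: vanishing increments via telescoping, closedness of $\widehat{\mathcal S}$ and of the graph of $\partial f$, and the $1/\mu$ rescaling so that $\mu\Delta_*\in\partial f(Y_*)$. One sign slip, shared with the paper: the limit of \eqref{eq:k_inequality} is $\tr\bigl(\bar\Delta^\tp(\bar Y-(A-BXC))\bigr)\le 0$, which is exactly the form \eqref{eq:D_gradient} that Lemma~\ref{lem:opt} requires.
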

\begin{proof}
By Lemma~\ref{lem:sufficient_descent}, the sequence $(\lVert Y_k - Y_* \rVert_\F + \lVert \Delta_k - \Delta_* \rVert_\F)_{k=0}^\infty$ is monotone decreasing and bounded below, thus convergent. It remains to show that the limit is zero. By passing through a subsequence if necessary, we may assume that $(Y_k)_{k=0}^\infty$ and $(\Delta_k)_{k=0}^\infty$ are convergent with limits $\widehat{Y}$ and  $\widehat{\Delta}$ respectively. Taking limit in \eqref{eq:k_inequality}, we get $\tr\bigl(\widehat{\Delta}^\tp(\widehat{Y} - A + BXC) \bigr) \ge 0$ for all $X \in \mathcal{S}$.
By Lemma~\ref{lem:opt}, $\widehat{Y}$ is a global minimizer. Let $Y_* = \widehat{Y}$ and $\Delta_* = \widehat{\Delta}$. Since $(\lVert Y_k - Y_* \rVert_\F + \lVert \Delta_k - \Delta_* \rVert_\F)_{k=0}^\infty$ is convergent, we must have $(Y_k,\Delta_k) \to (Y_*, \Delta_*)$. We rearrange \eqref{eq:sufficient_descent} to get \eqref{eq:sublin}.

If $B$ has full column rank and $C$ has full row rank, then $X \mapsto BXC$ is injective, so $X_*$ is unique. Taking limit in \eqref{eq:D_step} gives
$\lim_{k\to \infty} \lVert B(X_* - X_k)C \rVert_\F = 0$ and thus $\lim_{k\to \infty} \lVert X_* - X_k\rVert_\F = 0$.
\end{proof}

\subsection{Algorithm for generalized nearness in Schatten norms}\label{sec:Schatten}

Any norm $\lVert \, \cdot \, \rVert$ is subdifferentiable. To apply Theorem~\ref{thm:conv} to $f = \lVert \, \cdot \, \rVert$, we just need to guarantee that its minimum is attained on $\widehat{\mathcal{S}}$. We will show that this holds whenever $\lVert \, \cdot \, \rVert$ is orthogonally invariant and $\mathcal{S}$ is defined by the constraints~\ref{it:rank}--\ref{it:pos}, with one small exception --- the positive semidefinite subcase in \ref{it:pos}. We refer readers to the discussion following \cite[Proposition~4.1]{li22} for a counterexample for this subcase. We will also show that Theorem~\ref{thm:conv} applies to the generalized nearness problems considered in Sections~\ref{sec:Kronecker} and  \ref{sec:partial-trace}

\begin{proposition}[Minima attainment]\label{prop:attain}
Let $ A$, $B$, $B_1$, $B_2$, $C$, $C_1$, $C_2$ be matrices of appropriate dimensions. Let $\lVert\,\cdot\,\rVert$ be any orthogonally invariant norm.
\begin{enumerate}[\upshape (a)]
\item The function defined by $f(X) = \lVert A - BXC \rVert$ attains its minimum on any $\mathcal{S}$ defined by constraints~\ref{it:rank}--\ref{it:pos} except for the positive semidefinite case in \ref{it:pos}. 

\item The function defined by $f(X) = \lVert  A- (B_1 \otimes B_2)X(C_1 \otimes C_2)  \rVert$ attains its minimum on
\begin{align*}
\mathcal{S}_1 &= \{X \in \R^{p_1p_2 \times q_1q_2}: \rank_\otimes(X) \le r\},\\
\mathcal{S}_2 &= \{X \in \R^{p^2 \times p^2}: \tr_\Par(X(W \otimes I)) = \lambda W \text{ for some } W \in \R^{p \times p}\}.
\end{align*}
\end{enumerate}
\end{proposition}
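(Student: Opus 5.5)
Since $X$ enters $f$ only through $BXC$, and the map $X\mapsto BXC$ kills everything outside the block $X_{11}$ of \eqref{eq:equiv1}, I will first reduce to that block. By orthogonal invariance and \eqref{eq:equiv1}--\eqref{eq:equiv2}, $f(X)$ depends only on $Z \coloneqq \Sigma_B X_{11}\Sigma_C$. Moreover,
\[
f(X) \ge \lVert A_{11}-Z\rVert \ge \lVert Z\rVert - \lVert A_{11}\rVert,
\]
where the first inequality uses that for an orthogonally invariant norm the norm of a submatrix is at most the norm of the whole matrix (pad with zeros and use monotonicity in singular values). The plan for every constraint set is therefore the same. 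Take a minimizing sequence $X^{(k)}\in\mathcal{S}$. The corresponding $Z^{(k)}$, equivalently $X^{(k)}_{11}$, is then bounded. I then need to replace $X^{(k)}$ by another element of $\mathcal{S}$ with the same value of $BXC$ that lies in a bounded set, extract a convergent subsequence, and check that the limit lies in $\mathcal{S}$, i.e.\ that the relevant part of $\mathcal{S}$ is closed. Continuity of $f$ then gives attainment.

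For part (a) I will go through the constraints one at a time.
\begin{itemize}
\item \emph{Compact sets.} For the norm constraint \ref{it:norm} and for the correlation, stochastic and doubly stochastic matrices in \ref{it:pos}, $\mathcal{S}$ is compact, and the conclusion is immediate.
\item \emph{Closed subspaces and affine sets.} For the product constraint \ref{it:prod}, the symmetry constraints \ref{it:sym}, the structure constraints \ref{it:spec} and a prescribed eigenvector, $\mathcal{S}$ is a closed affine set $L$. The image $\{BXC : X\in L\}$ is the image of a closed affine set under a linear map, hence closed, since affine subspaces of finite-dimensional spaces are closed. The function $Y\mapsto\lVert A-Y\rVert$ is coercive, so it attains its minimum on this closed image.
\item \emph{Nonnegative matrices.} The image of the nonnegative orthant is a polyhedral cone, hence closed, and the same argument applies.
\item \emph{Rank constraint \ref{it:rank}.} Here I will set $X_{12}=X_{21}=X_{22}=0$. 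This does not increase rank and does not change $BXC$. Then $X_{11}$ is bounded, and $\{\rank\le r\}$ is closed.
\item \emph{Prescribed eigenvalue \ref{it:eig}.} I will use the rank reformulation $\rank(X-\lambda I)\le p-1$ together with the affine shift trick of Corollary~\ref{cor:partial-trace}. This reduces the case to the rank case, with $A$ replaced by $A-\lambda BC$.
\end{itemize}

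For part (b), Proposition~\ref{prop:Kronecker-vecb}, with the Frobenius norm replaced by the given norm, does not directly transfer, since $\mathcal{R}$ is only a Frobenius isometry. Instead I will argue directly with closedness. The set $\mathcal{S}_1=\mathcal{R}^{-1}(\{\rank\le r\})$ is closed, and it is a cone stable under the rank truncation described below. The map $X\mapsto(B_1\otimes B_2)X(C_1\otimes C_2)$ corresponds under $\mathcal{R}$ to $\widehat X\mapsto(C_2^\tp\otimes B_2)\widehat X(C_1\otimes B_1^\tp)$, which is again a two-sided product. So the reduction to the block $\widehat X_{11}$ used for the rank case applies, through the SVDs of these Kronecker factors: compressing $\widehat X$ to its $(1,1)$ block does not raise rank, and a bounded minimizing sequence exists. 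For $\mathcal{S}_2$, \eqref{eq:partial-eig} and Corollary~\ref{cor:partial-trace} rewrite the constraint as $\rank(\mathcal{R}(X)-\lambda I)\le p^2-1$, which is again a shifted rank constraint, and the same argument applies.

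The main obstacle is the compression step in the shifted rank cases, i.e.\ \ref{it:eig} and $\mathcal{S}_2$. Zeroing the off-diagonal blocks of $X-\lambda I$ keeps the rank bound and preserves $B(X-\lambda I)C$, so this step should be fine. The point I must verify carefully is that the whole bounded-block argument never needs $X$ itself to be bounded, only the part that $B$ and $C$ see.
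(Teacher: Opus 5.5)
Your proof is correct and follows essentially the paper's route: your compression $X\mapsto V_B\bigl[\begin{smallmatrix}X_{11}&0\\0&0\end{smallmatrix}\bigr]U_C^\tp$ is exactly the paper's orthogonal projection $\pi$ onto the complement of $\{X: BXC=0\}$, and your case analysis (compact sets, affine sets, the polyhedral cone, the rank set, and shifted rank sets for the prescribed eigenvalue and for $\mathcal{S}_2$) is the paper's. The one real variation is $\mathcal{S}_1$: you move to $\mathcal{R}$-space, where the constraint becomes ordinary rank and boundedness needs only equivalence of norms, whereas the paper compresses in the original space via the SVDs of $B_1\otimes B_2$ and $C_1\otimes C_2$ and leaves implicit the check that this compression preserves $\rank_\otimes(X)\le r$.
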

\begin{proof} 
Let $\mathbb{V} = \{X \in \R^{p \times q} : BXC = 0\}$ and $\pi$ be the orthogonal projection onto $\mathbb{V}^\perp$. Since $\lVert A - BXC \rVert = \lVert A - B\pi(X)C \rVert$, the function $f$ attains a minimum on $\mathcal{S}$ if and only if it attains a minimum on $\pi(\mathcal{S})$; and the latter happens if the sublevel set $\mathcal{E}_\alpha \coloneqq \{X \in \pi(\mathcal{S}) : \lVert A - BXC \rVert \le \alpha\}$ is compact for some $\alpha \ge 0$. Since the restriction $f\bigr\rvert_{\mathbb{V}^\perp}$ satisfies $f\bigr\rvert_{\mathbb{V}^\perp}(X) \to \infty$ whenever $\lVert X \rVert \to \infty$, the sublevel set $\mathcal{E}_\alpha$ is bounded. Thus it suffices to check that $\pi(\mathcal{S})$ is closed.
\begin{enumerate}[\normalfont(i)]
\item In \ref{it:norm} and the correlation, stochastic, and doubly stochastic subcases in \ref{it:pos}, the set $\mathcal{S}$ is compact, thus $\pi(\mathcal{S})$ is closed as $\pi$ is continuous. 
\item In \ref{it:prod}, \ref{it:sym}, \ref{it:spec}, and the prescribed eigenvector subcase in \ref{it:eig}, the set $\mathcal{S}$ is an affine subspace, thus so is $\pi(\mathcal{S})$. Every affine subspace is closed.
\item In the nonnegative subcase in \ref{it:pos}, the set $\mathcal{S}$ is a polyhedral cone, thus so is $\pi(\mathcal{S})$. Every polyhedral cone is closed since it is a finite intersection of halfspaces.
\end{enumerate}
For the rank constrained case \ref{it:rank}, $\mathcal{S} = \{X \in \R^{p \times q} : \rank(X) \le r\}$ and $\widehat{\mathcal{S}} = \{Y \in \R^{s \times t} : \rank(Y) \le r\}$. Let  $A_{11}$, $A_{12}$, $A_{21}$, $A_{22}$ be as in \eqref{eq:equiv1} and $\widehat{f} : \R^{s \times t} \to \R$ be defined by
\[
\widehat{f}(Y) = \biggl \lVert \begin{bmatrix}
    A_{11}-Y &A_{12}\\
A_{21} & A_{22}
\end{bmatrix} \biggr\rVert.
\]
Then, by \eqref{eq:equiv2}, 
\[
    \inf_{X \in \mathcal{S}} \; f(X) = \inf_{Y \in \widehat{\mathcal{S}}} \; \widehat{f}(Y).
\]
Since $\widehat{f}(Y_k) \to \infty$ whenever $\lVert Y_k \rVert \to \infty$ and $\widehat{\mathcal{S}}$ is closed, $\widehat{f}(Y)$ admits a minimum on $\widehat{\mathcal{S}}$. This proves case \ref{it:rank}. For the Kronecker rank constrained case, we use $\mathcal{S}_1$ in place of $\mathcal{S}$. By the singular value decompositions of $B_1 \otimes B_2$ and $C_1 \otimes C_2$, we obtain a relation similar to \eqref{eq:equiv2}. The rest of the argument follows verbatim.

The prescribed eigenvalue subcase in \ref{it:eig} reduces to the rank constrained case above, as $\lambda \in \lambda(X)$ if and only if $\rank(X -\lambda I)\le p-1$, so
\[
\min_{\lambda \in \lambda(X)} \lVert A - BXC \rVert = \min_{\rank(\widehat{X})\le p-1}\lVert (A -\lambda BC) - B\widehat{X}C \rVert.
\]
Likewise, the prescribed partial trace case reduces to the Kronecker rank constrained case by \eqref{eq:partial-eig}, giving us
\[
\min_{\rank_\otimes(\widehat{X})\le p^2-1} \lVert (A - \lambda (B_1 \otimes B_2)\mathcal{R}^{-1}(I)(C_1 \otimes C_2)) - (B_1 \otimes B_2)\widehat{X}(C_1 \otimes C_2) \rVert. \qedhere
\]
\end{proof}

Any orthogonally invariant function $f:\R^{m \times n} \to \R$ is a function of singular values. In fact, it must take the form $f = h \circ \sigma$ where  $h: \R^{\min(m,n)} \to \R$ is invariant under sign changes and permutations of components and $\sigma$ is as in \eqref{eq:sing} \cite{Lewis_UI}. In particular, if $h = \lVert \, \cdot \, \rVert$ is a norm on $\R^{\min(m,n)}$, then $h \circ \sigma$ is an orthogonally invariant norm on $\R^{m \times n}$. We also have the following formulas for subdifferentials:
\begin{align}
    \partial f(Y) &= \bigl\{U\diag(g)V^\tp : Y = U\Sigma V^\tp,\, g \in \partial h\bigl(\sigma(Y)\bigr) \bigr\}, \label{eq:uinorm_subdifferential} \\
    \partial\lVert v \rVert &= \{g \in \R^{\min(m,n)} : g^\tp v = \lVert v \rVert, \, \lVert g \rVert^* \le 1 \}. \label{eq:norm_sub}
\end{align} 
With these, we may explicitly solve the $Y_k$-update step \eqref{eq:Y_step} in Algorithm~\ref{alg:general} for any $f = \lVert \, \cdot \, \rVert_{\st, p}$ with $p \in [1,\infty]$.
\begin{lemma}[$Y_k$-update for nuclear norm]\label{lem:nuc}
Let $\mu>0$ and $M \in \R^{m \times n}$ with  singular value decomposition $M = U\Sigma V^\tp$. Let $\sigma_* \in \R^{\min(m,n)}$ be given by  
\[\sigma_i^* = 
\begin{dcases}
\sigma_i(M)-1/\mu  & \text{if } \sigma_i(M)>1/\mu,\\
0 & \text{if } \sigma_i(M)\le 1/\mu,
\end{dcases}
\]
for $i = 1,\dots, \min(m,n)$. Then
\[
Y_* \coloneqq U \diag(\sigma_*) V^\tp \in \argmin  \bigl\{ \lVert Y \rVert_{\st,1} + \tfrac{\mu}{2} \lVert Y - M \rVert_\F^2 : Y \in \R^{m \times n}\bigr\}.
\]
\end{lemma}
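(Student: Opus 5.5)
The objective $F(Y) \coloneqq \lVert Y \rVert_{\st,1} + \tfrac{\mu}{2}\lVert Y - M\rVert_\F^2$ is convex; in fact it is strongly convex because of the quadratic term. So it suffices to verify the first-order optimality condition $0 \in \partial F(Y_*)$, i.e.,
\[
\mu(M - Y_*) \in \partial \lVert \,\cdot\, \rVert_{\st,1}(Y_*).
\]
The whole argument then reduces to exhibiting this membership using the subdifferential formulas \eqref{eq:uinorm_subdifferential} and \eqref{eq:norm_sub}, with $h = \lVert\,\cdot\,\rVert_1$ on $\R^{\min(m,n)}$, whose dual norm is $\lVert\,\cdot\,\rVert_\infty$.

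First, write $\mu(M - Y_*) = U\diag(g)V^\tp$ with $g \coloneqq \mu(\sigma(M) - \sigma_*)$, using the same $U, V$ as in the singular value decomposition of $M$. Componentwise, $g_i = 1$ when $\sigma_i(M) > 1/\mu$, and $g_i = \mu\sigma_i(M) \in [0,1]$ when $\sigma_i(M) \le 1/\mu$.

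Second, observe that $Y_* = U\diag(\sigma_*)V^\tp$ is itself a singular value decomposition of $Y_*$. The entries of $\sigma_*$ are nonnegative, and they remain in nonincreasing order because the map $s \mapsto \max(s - 1/\mu, 0)$ is monotone. Thus $\sigma(Y_*) = \sigma_*$, and this $U, V$ pair is admissible in \eqref{eq:uinorm_subdifferential}.

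Third, check via \eqref{eq:norm_sub} that $g \in \partial\lVert\sigma_*\rVert_1$. We need $\lVert g\rVert_\infty \le 1$, which is clear from the previous step. We also need $g^\tp \sigma_* = \lVert \sigma_*\rVert_1$. This holds because $g_i = 1$ on the support of $\sigma_*$, and $\sigma_i^* = 0$ elsewhere, so the remaining terms vanish. Combining these, $\mu(M-Y_*) \in \partial\lVert\cdot\rVert_{\st,1}(Y_*)$, so $0 \in \partial F(Y_*)$ by the sum rule for subdifferentials of convex functions, where the quadratic term is differentiable with gradient $\mu(Y - M)$. Hence $Y_*$ is a minimizer, and indeed the unique one by strong convexity.

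The only delicate point is the second step. Formula \eqref{eq:uinorm_subdifferential} requires $U, V$ to come from a genuine singular value decomposition of $Y_*$, with ordered singular values. This is also where one should make sure that zero and repeated singular values cause no issue: the formula quantifies over some singular value decomposition, and we have exhibited one. Everything else is routine verification.
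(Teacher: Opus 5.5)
Your proposal is correct and follows essentially the paper's argument: the same certificate $g$ (equal to $1$ where $\sigma_i(M) > 1/\mu$ and $\mu\sigma_i(M)$ elsewhere) is verified via \eqref{eq:norm_sub} and \eqref{eq:uinorm_subdifferential}, giving $U\diag(g)V^\tp + \mu(Y_*-M) = 0$. Two of your details go beyond the paper's text: you check explicitly that $U\diag(\sigma_*)V^\tp$ is a genuine singular value decomposition of $Y_*$, which the paper leaves implicit, and you define $g = \mu(\sigma(M)-\sigma_*)$ directly, which avoids the paper's index $\ell$ (not well defined in $\{1,\dots,\min(m,n)\}$ when every $\sigma_i(M) < 1/\mu$).
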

\begin{proof}
Choose $\ell\in \{1,\dots,\min(m,n)\}$ such that
\[\begin{dcases}
\sigma_i(M) \ge 1/\mu  & \text{if } i \le \ell,\\
\sigma_i(M) < 1/\mu & \text{if } i > \ell.
\end{dcases}\]
Let $g \in \R^{\min (m, n)}$ be defined by
\[
g_{i} = \begin{dcases}
    1  & \text{if } i \le \ell,\\
    \mu \sigma_i(M) & \text{if } i > \ell.
    \end{dcases}
\]
Since $\lVert g \lVert_\infty = 1$ and 
\[
    g^\tp \sigma_* = \sum_{i=1}^\ell \bigl(\sigma_i(M)-1/\mu \bigr) = \lVert \sigma_* \rVert_1,
\]
we obtain $g\in \partial \lVert \sigma_* \rVert_1$ by \eqref{eq:norm_sub}. It follows that $U\diag(g)V^\tp \in \partial \lVert Y_* \rVert_{\st,1}$ by \eqref{eq:uinorm_subdifferential}. So $Y_*$ is a required minimizer as
\[
U\diag(g)V^\tp + \mu (Y_* -M) = 0 \in \partial \bigl( \lVert Y_* \rVert_{\st,1} + \tfrac{\mu}{2} \lVert Y_* - M \rVert_\F^2\bigr). \qedhere
\]
\end{proof}

\begin{lemma}[$Y_k$-update for spectral norm]\label{lem:spec}
Let $\mu>0$ and $M \in \R^{m \times n}$ with  singular value decomposition $M = U\Sigma V^\tp$. Let
\begin{equation}\label{eq:pick-a}
    \ell = \max\biggl\{ i \in \{1,\dots, \min(m,n)\} : \sigma_i(M) \ge \frac{1}{i}\biggl(\sum_{j=1}^i \sigma_j(M) -\frac{1}{\mu}\biggr) \biggr\}
\end{equation}
and $\sigma_* \in \R^{\min(m,n)}$ be given by 
\[\sigma_i^* = 
\begin{dcases}
\frac{1}{\ell} \biggl( \sum_{j=1}^\ell \sigma_j(M) -\frac{1}{\mu} \biggr) & \text{if }   i \le \ell,\\
\sigma_i(M) &\text{if } i > \ell,
\end{dcases}
\]
for $i = 1,\dots, \min(m,n)$.  Then
\[
Y_* \coloneqq U \diag(\sigma_*) V^\tp \in \argmin  \bigl\{ \lVert Y \rVert_{\st,\infty} + \tfrac{\mu}{2} \lVert Y - M \rVert_\F^2 : Y \in \R^{m \times n}\bigr\}.
\]
\end{lemma}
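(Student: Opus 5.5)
The plan mirrors the proof of Lemma~\ref{lem:nuc}. We exhibit a subgradient $G \in \partial \lVert Y_* \rVert_{\st,\infty}$ with $G + \mu(Y_* - M) = 0$; convexity then gives optimality. Since $Y_*$ and $M$ share the singular vectors $U,V$, the condition reduces to a vector condition. We need $g \in \R^{\min(m,n)}$ with
\[
g = \mu\bigl(\sigma(M) - \sigma_*\bigr), \qquad g \in \partial\lVert \sigma_*\rVert_\infty .
\]
We then set $G = U\diag(g)V^\tp$ and invoke \eqref{eq:uinorm_subdifferential}. Note that the dual norm of $\lVert\,\cdot\,\rVert_\infty$ is $\lVert\,\cdot\,\rVert_1$.

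Write $\tau \coloneqq \frac{1}{\ell}\bigl(\sum_{j\le \ell}\sigma_j(M) - 1/\mu\bigr)$. The forced choice is $g_i = \mu(\sigma_i(M) - \tau)$ for $i\le \ell$ and $g_i = 0$ for $i>\ell$. By \eqref{eq:norm_sub} we must check three things.
\begin{enumerate}[(i)]
\item $\sum_{i\le\ell} g_i = \mu\bigl(\sum_{i\le \ell}\sigma_i(M) - \ell\tau\bigr) = 1$, which holds by the definition of $\tau$.
\item $g_i \ge 0$ for $i\le \ell$, i.e.\ $\sigma_i(M)\ge\tau$. Together with (i) this gives $\lVert g\rVert_1 = 1$.
\item $g^\tp\sigma_* = \tau\sum_{i\le\ell} g_i = \tau$ must equal $\lVert\sigma_*\rVert_\infty$. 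This requires $\tau \ge 0$ and $\tau \ge \sigma_{\ell+1}(M)$, so that the capped entries are the largest.
\end{enumerate}
One also needs $\sigma_*$ to be nonincreasing and nonnegative for $U\diag(\sigma_*)V^\tp$ to be an SVD of $Y_*$. That follows from the same inequalities $\tau\ge\sigma_{\ell+1}(M)$ and $\tau\ge0$.

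The main obstacle is deriving these inequalities from the definition \eqref{eq:pick-a} of $\ell$. Let $\tau_i \coloneqq \frac1i\bigl(\sum_{j\le i}\sigma_j(M)-1/\mu\bigr)$, so that the defining condition is $\sigma_i(M)\ge \tau_i$. The key identity is
\[
(i+1)\tau_{i+1} = i\tau_i + \sigma_{i+1}(M),
\]
so $\tau_{i+1}$ is a weighted average of $\tau_i$ and $\sigma_{i+1}(M)$.
\begin{itemize}
\item \emph{Inequality (ii).} Since $\sigma_i(M)\ge\sigma_\ell(M)\ge\tau_\ell=\tau$ for every $i\le \ell$, this is immediate.
\item \emph{The bound $\tau\ge\sigma_{\ell+1}(M)$.} Maximality of $\ell$ gives $\sigma_{\ell+1}(M)<\tau_{\ell+1}$. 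The identity then forces $\tau_{\ell+1}<\tau_\ell$, and hence $\sigma_{\ell+1}(M)<\tau_{\ell+1}<\tau$.
\item \emph{The bound $\tau\ge0$.} Here I expect a genuine subtlety. If $1/\mu \ge \lVert\sigma(M)\rVert_1$, the true minimizer is $Y_*=0$, while the formula with $\ell=\min(m,n)$ gives $\tau<0$. So I would first add to the definition of $\ell$ the proviso that the threshold be taken as $\max(\tau,0)$, and handle the degenerate case $\sum_j \sigma_j(M) \le 1/\mu$ separately. In that case $G=\mu M$ has $\lVert G\rVert_{\st,1}\le1$, so $G \in \partial\lVert 0\rVert_{\st,\infty}$, and $Y_*=0$ is optimal. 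Otherwise the identity shows that the $\tau_i$ eventually become positive, and one checks that $\tau_\ell>0$.
\end{itemize}

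Finally, $0 = G + \mu(Y_*-M)\in \partial\bigl(\lVert Y\rVert_{\st,\infty}+\tfrac{\mu}{2}\lVert Y-M\rVert_\F^2\bigr)\big|_{Y=Y_*}$, exactly as in the proof of Lemma~\ref{lem:nuc}. This completes the argument.
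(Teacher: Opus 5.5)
Your construction is the paper's. It uses the same vector $g$: the paper's $g_i=\frac1\ell+\mu\sigma_i(M)-\frac1\ell\sum_{j\le\ell}\mu\sigma_j(M)$ is your $\mu(\sigma_i(M)-\tau)$. It also uses the same nonnegativity check from \eqref{eq:pick-a}, the same appeal to maximality of $\ell$ for $\tau\ge\sigma_{\ell+1}(M)$, and the same lift to matrices via \eqref{eq:uinorm_subdifferential}.

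The one real difference is that you handle the sign of $\tau$, and there you are right and the paper is not. The paper's proof asserts $\partial\lVert\sigma_*\rVert_\infty=\conv\{e_i : i\le\ell\}$ without checking $\tau\ge0$. In fact the lemma as stated is false whenever $\lVert M\rVert_{\st,1}<1/\mu$. In that case $\ell=\min(m,n)$ and $\tau<0$, so $U\diag(\sigma_*)V^\tp$ is not an SVD. The objective at $Y_*$ is $\lvert\tau\rvert+\frac\mu2\sum_i(\sigma_i(M)-\tau)^2$, which strictly exceeds $\frac\mu2\sum_i\sigma_i(M)^2$, the value at $Y=0$. For instance, $m=n=1$, $M=0$, $\mu=1$ gives $\ell=1$ and $\lvert Y_*\rvert=1$ with objective $3/2$, versus $0$ at $Y=0$.

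Your repair is exactly what is needed: clip the threshold at $0$, i.e., return $Y_*=0$ with certificate $G=\mu M\in\partial\lVert 0\rVert_{\st,\infty}$ when $\lVert M\rVert_{\st,1}\le 1/\mu$. The same fix is needed in the $p=\infty$ branch of Algorithm~\ref{alg:Schatten}.

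Two small tightenings:
\begin{enumerate}[(i)]
\item The statement fails only under the strict inequality. At $\lVert M\rVert_{\st,1}=1/\mu$ one gets $\ell=\min(m,n)$ and $\tau=0$, and the stated formula already returns $0$.
\item Your ``one checks that $\tau_\ell>0$'' is immediate. If $\ell<\min(m,n)$, you already showed $\tau>\sigma_{\ell+1}(M)\ge0$. If $\ell=\min(m,n)$, then $\tau=(\lVert M\rVert_{\st,1}-1/\mu)/\ell>0$ outside the degenerate case. No appeal to the $\tau_i$ ``eventually becoming positive'' is needed.
\end{enumerate}
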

\begin{proof}
Let $g \in \R^{\min(m, n)}$ be given by 
\[
g_{i} = \begin{dcases}
    \frac{1}{\ell} + \mu \sigma_i(M) - \frac{1}{\ell}\sum_{j=1}^\ell \mu \sigma_j(M)  &\text{if } i \le \ell,\\
    0 &\text{if } i > \ell.
    \end{dcases}
\]
By \eqref{eq:pick-a}, 
\[
\mu \sigma_i(M) \ge \mu\sigma_\ell(M) \ge \frac{1}{\ell}\biggl(\sum_{j=1}^\ell \mu\sigma_j(M) -1\biggr)
\]
for all $i = 1,\dots, \ell$. So $g$ is a nonnegative vector. If $\ell <\min(m,n)$, then the maximality of $\ell$ implies that 
\[
\frac{1}{\ell}\biggl(\sum_{i=1}^\ell \sigma_i(M) -\frac{1}{\mu}\biggr) \ge \sigma_{\ell+1}.
\]
The coordinates of $\sigma_*$ are thus in nondecreasing order. By \cite[Example~1]{UIsubdifferential},
\[\partial \lVert \sigma_* \rVert_\infty = \conv \{e_i : i = 1,\dots, \ell \}.\]
Since
\[
    \sum_{i=1}^\ell \biggl( \frac{1}{\ell} + \mu \sigma_i(M) - \frac{1}{\ell}\sum_{j=1}^\ell \mu \sigma_j(M) \biggr) = 1,
\]
we obtain $g \in \partial \lVert \sigma_*\rVert_\infty$. It follows that $U\diag(g)V^\tp \in \partial \lVert Y_* \rVert_{\st,\infty}$ by \eqref{eq:uinorm_subdifferential}. So $Y_*$ is a required minimizer as
\[
    U\diag(g)V^\tp + \mu(Y_*-M) = 0\in \partial \bigl( \lVert Y_* \rVert_{\st,\infty} + \tfrac{\mu}{2} \lVert Y_* - M \rVert_\F^2\bigr). \qedhere
\]
\end{proof}
\begin{lemma}[$Y_k$-update for Schatten norm]\label{lem:sch}
Let $p \in (1,\infty)$, $\mu>0$, and $M \in \R^{m \times n}$ with  singular value decomposition $M = U\Sigma V^\tp$. Let $\sigma_* \in \R^{\min(m,n)}$ be given by the unique root in $[0,\infty)$ of
\[
p\sigma_i^{\ast \smash[b]{p-1}} + \mu (\sigma_i^* - \sigma_i(M)) = 0,
\]
for $ i = 1, \dots, \min(m,n)$. Then
\[
Y_* \coloneqq U \diag(\sigma_*) V^\tp \in \argmin  \bigl\{ \lVert Y\rVert_{\st,p}^p + \tfrac{\mu}{2} \lVert Y - M \rVert_\F^2 : Y \in \R^{m \times n}\bigr\}.
\]
\end{lemma}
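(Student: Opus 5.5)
I will follow the template of Lemmas~\ref{lem:nuc} and \ref{lem:spec}. I exhibit a subgradient of the objective at $Y_*$ and show it cancels the gradient of the quadratic term. The difference is that for $p\in(1,\infty)$ the function $F(Y)=\lVert Y\rVert_{\st,p}^p + \tfrac{\mu}{2}\lVert Y-M\rVert_\F^2$ is differentiable and strictly convex. So the first-order condition $0\in\partial F(Y_*)$ is both necessary and sufficient, and a single gradient computation will do.

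First I settle the scalar equation. For fixed $s=\sigma_i(M)\ge 0$, consider $\phi(t)=pt^{p-1}+\mu(t-s)$ on $[0,\infty)$. It is continuous and strictly increasing, since $p>1$ and $\mu>0$. It satisfies $\phi(0)=-\mu s\le 0$ and $\phi(t)\to\infty$. Hence it has a unique root $\sigma_i^*\in[0,s]$. The root is $0$ exactly when $s=0$. Moreover, $s\mapsto\sigma_i^*$ is nondecreasing, because the root of $\phi$ increases as $s$ increases. Therefore the ordering $\sigma_1(M)\ge\sigma_2(M)\ge\cdots$ is inherited by $\sigma_*$, and $U\diag(\sigma_*)V^\tp$ is a genuine singular value decomposition of $Y_*$, with the same $U,V$ as $M$.

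Next I compute the subdifferential of the Schatten term. Write $\lVert Y\rVert_{\st,p}^p=h(\sigma(Y))$ with $h(v)=\sum_i\lvert v_i\rvert^p$. This $h$ is convex, symmetric and sign-invariant, and differentiable with $\nabla h(v)_i=p\,\mathrm{sign}(v_i)\lvert v_i\rvert^{p-1}$. By \eqref{eq:uinorm_subdifferential}, applied to the orthogonally invariant convex function $h\circ\sigma$ (the formula there is stated for general $f=h\circ\sigma$, not only norms), the matrix $G\coloneqq U\diag(g)V^\tp$ lies in $\partial\lVert Y_*\rVert_{\st,p}^p$, where $g_i=p\,\sigma_i^{*\,p-1}\ge 0$. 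Then
\[
G+\mu(Y_*-M)=U\diag\bigl(p\sigma_i^{*\,p-1}+\mu(\sigma_i^*-\sigma_i(M))\bigr)V^\tp=0
\]
by the defining equation of $\sigma_*$. So $0\in\partial\lVert Y_*\rVert_{\st,p}^p+\mu(Y_*-M)=\partial F(Y_*)$, the sum rule being valid because the quadratic term is differentiable. By convexity of $F$, $Y_*$ is a global minimizer; strict convexity even gives uniqueness, though that is not needed.

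The main obstacle is justifying that \eqref{eq:uinorm_subdifferential} applies to $\lVert\cdot\rVert_{\st,p}^p$, which is not a norm, and that the SVD used there may be the particular one $Y_*=U\diag(\sigma_*)V^\tp$ sharing $U,V$ with $M$. I would cite Lewis's result \cite{Lewis_UI} on convex spectral functions, which covers any absolutely symmetric convex $h$. The monotonicity of $s\mapsto\sigma_i^*$, established in the first step, is exactly what guarantees that $\sigma_*$ is correctly ordered, so that this factorization qualifies. When $m\neq n$, the zero blocks of $\diag$ are padded as in the previous lemmas.
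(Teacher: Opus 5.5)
Your proposal is correct and follows essentially the same route as the paper: you show the scalar equation has a unique root in $[0,\infty)$ by continuity and strict monotonicity, then apply \eqref{eq:uinorm_subdifferential} to $\lVert\cdot\rVert_{\st,p}^p$ to get the subgradient $U\diag(p\sigma_i^{*\,p-1})V^\tp$, which cancels $\mu(Y_*-M)$. You also supply details the paper leaves implicit: that $s\mapsto\sigma_i^*$ is increasing, so $U\diag(\sigma_*)V^\tp$ is a genuine singular value decomposition of $Y_*$, and the sum-rule and convexity justification that $0\in\partial F(Y_*)$ gives a global minimizer.
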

\begin{proof}
Since $\varphi(x) \coloneqq px^{p-1}+\mu(x-\sigma_i(M))$ is continuous and strictly increases to $\infty$ on $[0,\infty)$ and evaluates to $-\mu\sigma_i(M) \leq 0$ at $x=0$, it has a unique zero in $[0,\infty)$ for each $i = 1, \dots, \min(m,n)$. By \eqref{eq:uinorm_subdifferential}, for any $Y \in \R^{m \times n}$,
\[
\partial \lVert Y \rVert_{\st,p}^p = \bigl\{U\diag(p\sigma_1(Y)^{p-1}, \dots, p\sigma_{\min(m,n)}(Y)^{p-1})V^\tp : Y = U\Sigma V^\tp\bigr\}.
\] 
So $Y_*$ is a required minimizer as $0 \in \partial  \bigl( \lVert Y_* \rVert_{\st,p}^p + \tfrac{\mu}{2} \lVert Y_* - M \rVert_\F^2\bigr)$.
\end{proof}

With the solutions of the $Y_k$-update step \eqref{eq:Y_step} in Lemmas~\ref{lem:nuc}, \ref{lem:spec}, \ref{lem:sch}, Algorithm~\ref{alg:general} now takes the explicit form below. Convergence follows from Theorem~\ref{thm:conv} and Proposition~\ref{prop:attain}.

\begin{algorithm}
\caption{Algorithm for generalized nearness in Schatten norms}\label{alg:Schatten}
\begin{algorithmic}[1]
\Require $ A \in \R^{m \times n}$, $B \in \R^{m \times s}$, $C \in \R^{t \times n} $, $\mu > 0$, $p \in [1,\infty]$;
\State initialize $X_0$, $Y_0$, $\Delta_0$, and $k = 0$;
\State solve $X_k$-update step \eqref{eq:X_step} for $X_{k+1}$ via methods in \cite{li22}, Propositions~\ref{prop:Kronecker-vecb} or \ref{prop:partial-trace}; \label{line:X_step2}
\State compute singular value decomposition of $M \coloneqq A + \Delta_k - BX_{k+1}C = U \Sigma V^\tp$;
\If{$p = 1$}
    \State compute $\sigma_* \in \R^{\min (m,n)}$ with
    \[
    \sigma_i^* = 
        \begin{dcases}
        \sigma_i (M)-1/\mu  & \text{if } \sigma_i (M) >1/\mu,\\
        0 & \text{if } \sigma_i(M) \le 1/\mu;
        \end{dcases} 
    \]
\ElsIf{$p = \infty$}
    \State compute 
    \[\ell = \max\biggl\{ i \in \{1,\dots, \min(m,n)\} :  \sigma_i(M) \ge \frac{1}{i}\biggl(\sum_{j=1}^i \sigma_j(M) -\frac{1}{\mu}\biggr) \biggr\};\]
    \State compute $\sigma_* \in \R^{\min (m,n)}$ with
    \[
    \sigma_i^* = 
    \begin{dcases}
    \frac{1}{\ell}\biggl(\sum_{j=1}^\ell \sigma_j(M) -\frac{1}{\mu} \biggr)  & \text{if } i \le \ell,\\
    \sigma_i(M) &\text{if } i > \ell;
    \end{dcases}
    \]
\Else 
    \For {$k = 1,\dots, \min(m,n)$}
        \State solve $pz^{p-1} + \mu (z - \sigma_k(M)) = 0$ for $\sigma_k^*$;
    \EndFor
\EndIf
\State compute $Y_{k+1} = U \diag(\sigma_*) V^\tp$;
\State compute $\Delta_{k+1} = \Delta_k - Y_{k+1} + A - BX_{k+1}C$;
\State $k = k+1$ and go to line~\eqref{line:X_step2}.
\end{algorithmic}
\end{algorithm}

A departure from general purpose optimization algorithms is that Algorithm~\ref{alg:Schatten} does not involve numerical differentiation, symbolic differentiation, or automatic differentiation. This also applies to the methods referenced in Step~\ref{line:X_step2}, as those in \cite{li22} are notable for being zeroth order (no derivatives), as are those in Section~\ref{sec:closed-form}.

\section{Numerical experiments}\label{sec:experiments}

We present three sets of numerical experiments to demonstrate the advantages of Algorithm~\ref{alg:Schatten} over standard convex optimization methods and to  illustrate its utility in real-world applications. All codes have been made available at \url{https://github.com/thomasw15/near2}. All experiments are done in Python; the version of \texttt{CVX} below refers to \texttt{CVXPY} \cite{cvx}.

\subsection{Matrix recovery}

Here we will see the performance of Algorithm~\ref{alg:Schatten} on
\[
\min_{X \in \mathcal{S} } \; \lVert A - BXC \rVert_{\st,p}
\] 
for $p=1$, $\frac32$, and $\infty$ in the (i) unconstrained, (ii) two-sided product, (iii) prescribed eigenvalue, and (iv) rank constrained generalized nearness problem. The cases (i) and (ii)  are convex, and we will compare our results against the popular package \texttt{CVX}  with its \texttt{SCS} solver \cite{SCS} for nuclear norm ($p=1$) and spectral norm ($p=\infty$). The other two cases (iii) and (iv) are nonconvex, so \texttt{CVX} does not work on these, and we simply record our results. What might surprise the uninitiated is that there is also no conceivable way to solve cases (i) and (ii) in the Schatten $\frac32$-norm with \texttt{CVX}, convexity of the problems notwithstanding. General purpose packages like \texttt{CVX} are designed for problems that can be transformed into a handful of standard convex programs (e.g., LP, QP, SOCP, SDP, GP) and are all but useless for convex objective functions and constraints that cannot be put in these forms. Indeed, we picked the unusual value $p = \frac32$ to make this point.

This experiment is designed for maximal control so that we know the true solutions a priori and may compare forward errors. We ``work backwards'' to set up our data:  generate random matrices $B, C, X \in \R^{n \times n}$ where $X$ satisfies the desired constraint, and then set $A \coloneqq BXC$. This simulates the common scenario in which one seeks to fit $A = BXC$ in the presence of errors by minimizing $\lVert A - BXC \rVert_{\st,p}$ subject to some known structure of $X$. Given that we know the true solution $X$ explicitly, we may measure the relative forward error $\lVert \widehat{X} - X \rVert_\F /\lVert X \rVert_\F$, where $\widehat{X}$ is the solution computed from $A,B,C$ using either Algorithm~\ref{alg:Schatten} or \texttt{CVX}.

\subsubsection{Speed} 

For each dimension $n = 2^k$, $k = 1,\dots, 10$, we quantify the speed of Algorithm~\ref{alg:Schatten} as its average run times for forward error to reach $10^{-8}$ over ten experiments. For the cases (i) and (ii), and when the norm is nuclear or spectral, we compare our results with the \texttt{SCS} solver in \texttt{CVX}.  Figure~\ref{fig:speed} shows that Algorithm~\ref{alg:Schatten} converges significantly faster than \texttt{CVX} in all cases. In fact, \texttt{CVX} fails to converge for larger dimension $n$ within a reasonable time limit. For example, when $n = 2^7$, \texttt{CVX} took about two hours to solve the two-sided product constrained problem (ii) when Algorithm~\ref{alg:Schatten} took mere hundredth of a second. For $n = 2^8$ and beyond, \texttt{CVX} did not converge within $24$ hours.

\begin{figure}[htb]
\minipage{0.49\textwidth}
    \includegraphics[trim={0.6em 1.4ex .5em 0.8ex}, clip, width=\textwidth]{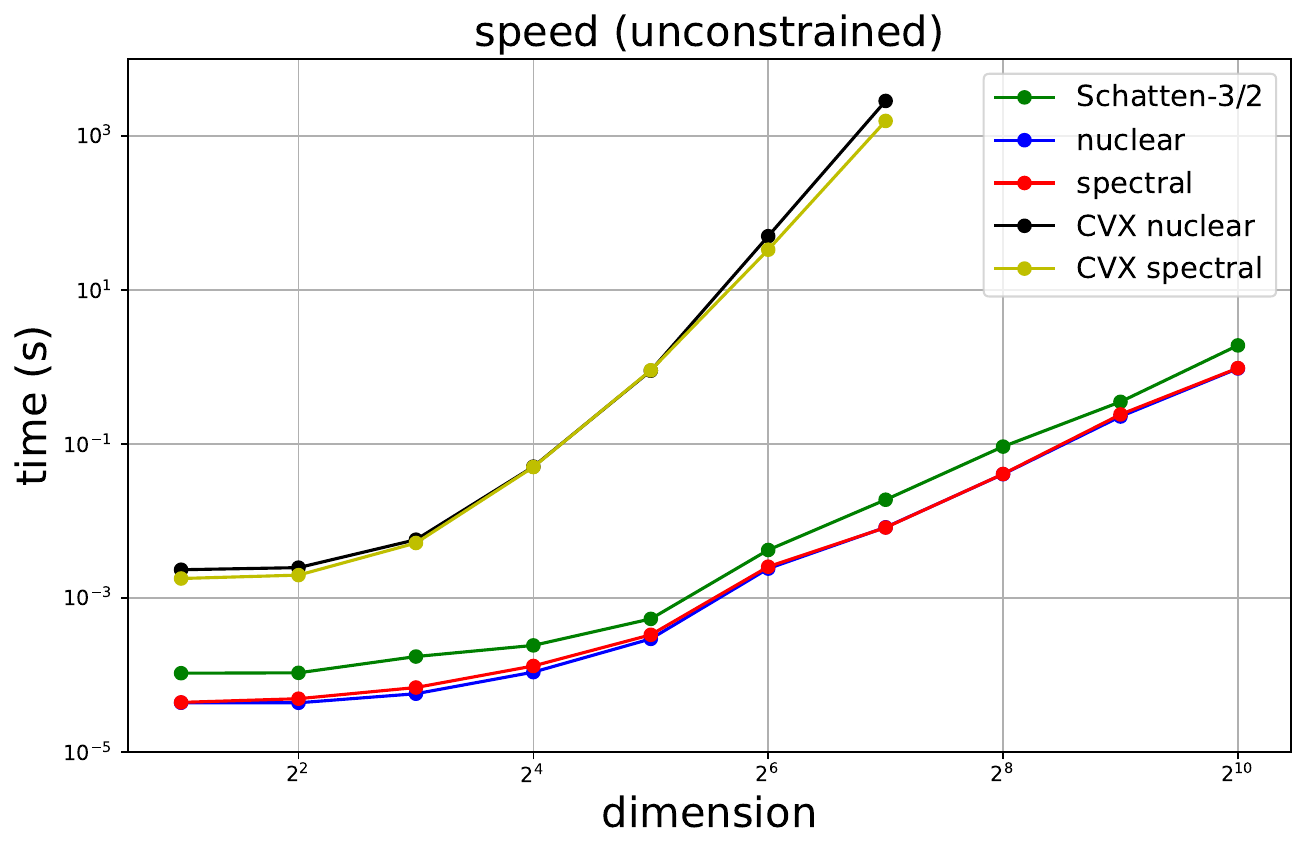}
\endminipage\hfill
\minipage{0.49\textwidth}
    \includegraphics[trim={0.6em 1.4ex .5em 0.8ex}, clip, width=\textwidth] {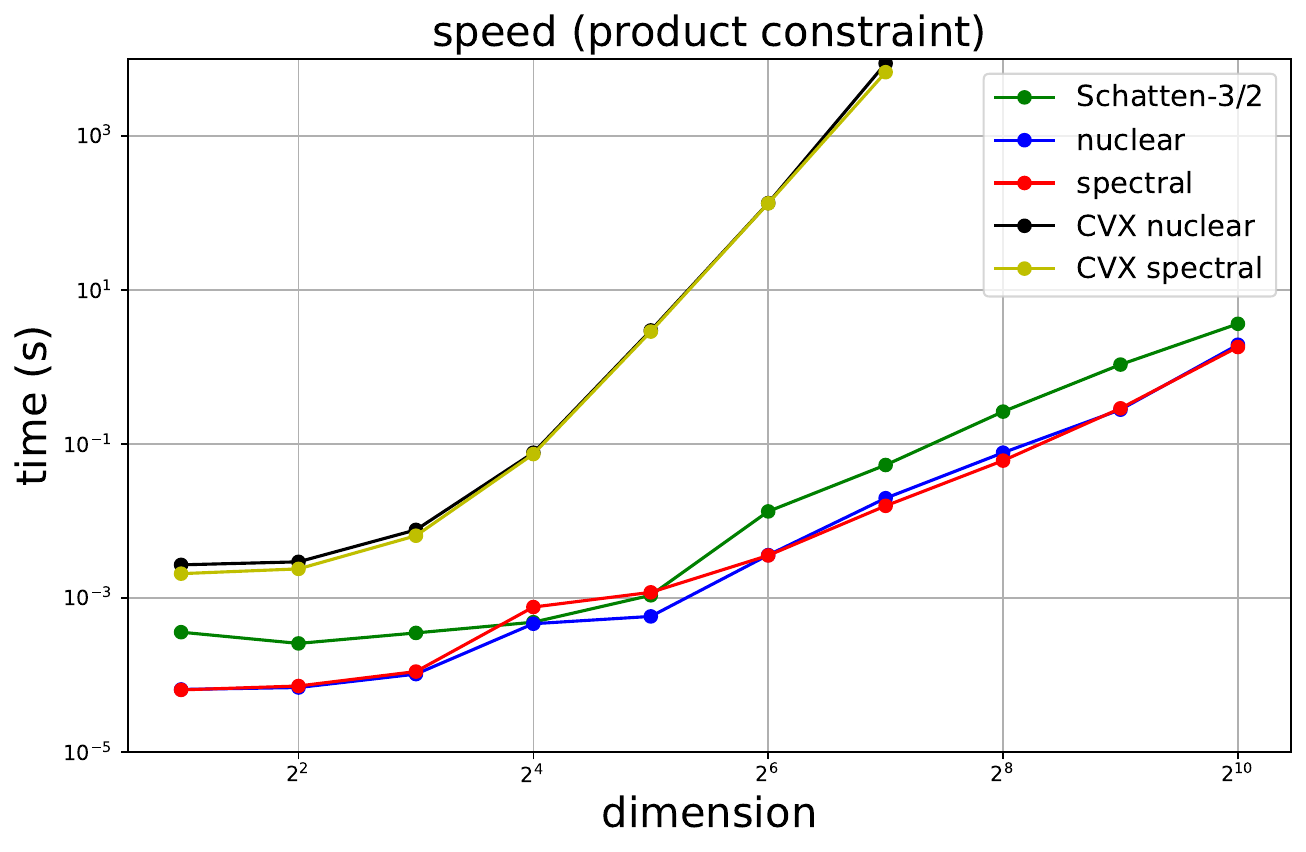}
\endminipage\hfill
\minipage{0.49\textwidth}
    \includegraphics[trim={0.6em 1.4ex .5em 0.8ex}, clip, width=\textwidth]{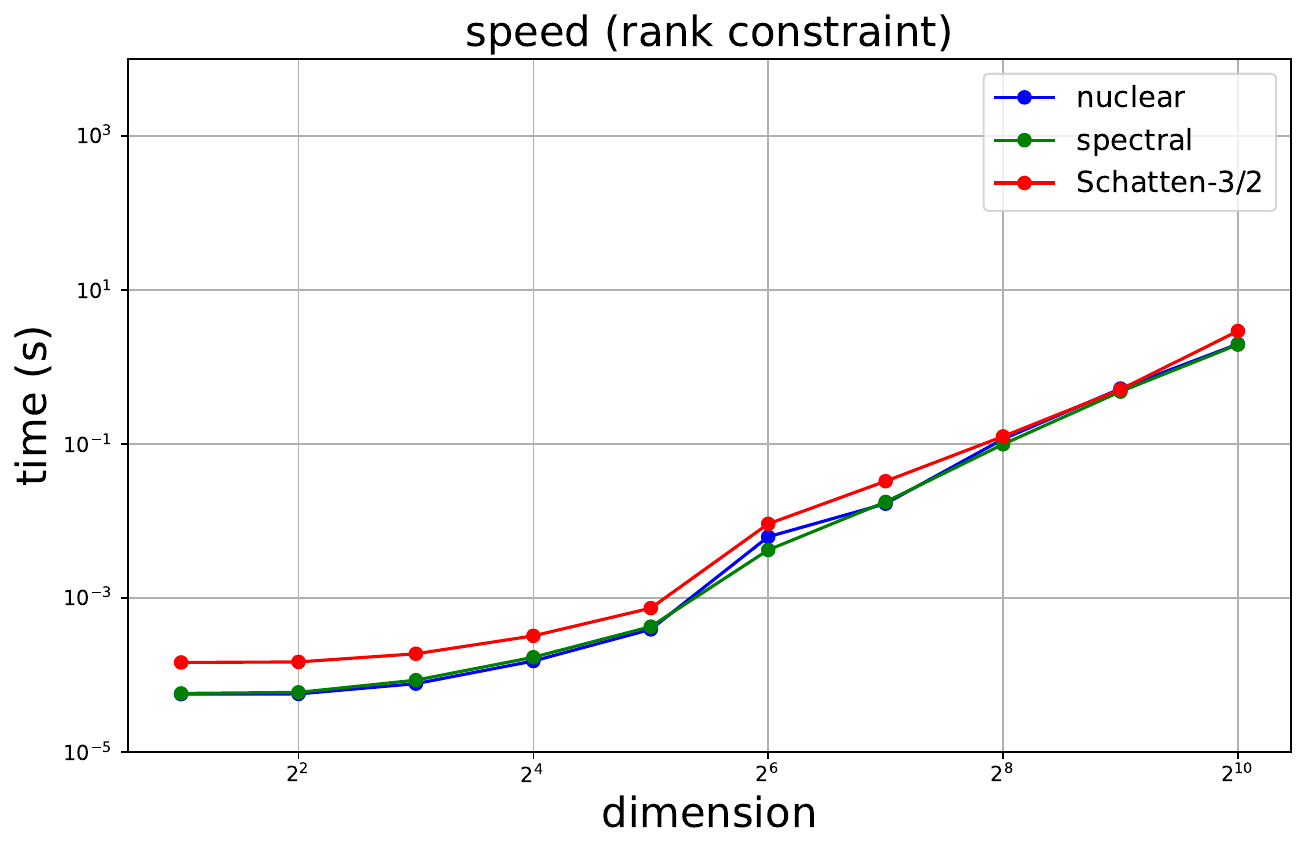}
\endminipage\hfill
\minipage{0.49\textwidth}
    \includegraphics[trim={0.6em 1.4ex .5em 0.8ex}, clip, width=\textwidth]{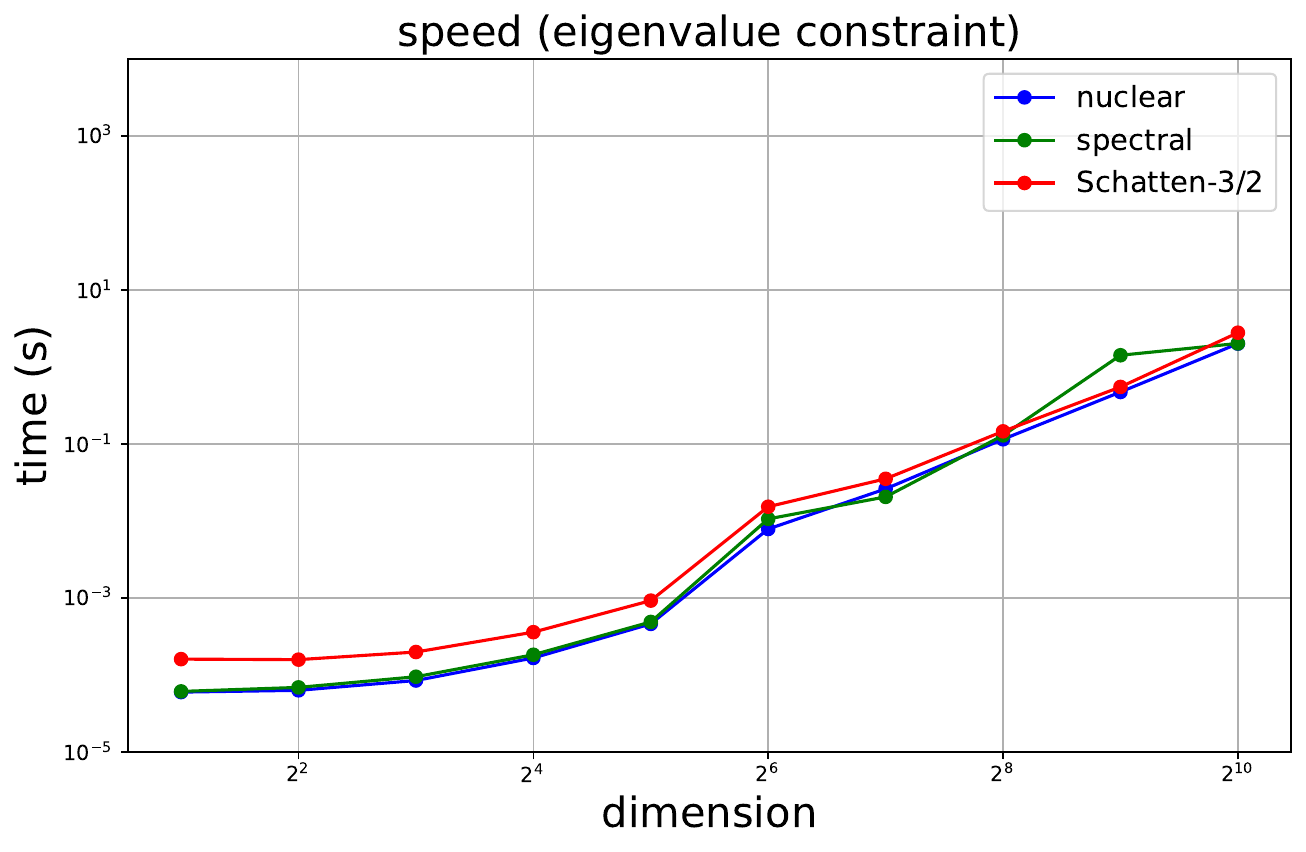}
\endminipage\hfill
\caption{Speed of Algorithm~\ref{alg:Schatten}. \texttt{CVX} comparison is unavailable for the bottom two problems, which are nonconvex, or the Schatten $\frac32$-norm, which is nonstandard.}
\label{fig:speed}
\end{figure}  
\FloatBarrier

\subsubsection{Accuracy}

We compute the relative forward errors of Algorithm~\ref{alg:Schatten} and compare them with \texttt{CVX}'s in the convex cases. We set $n = 32$, as \texttt{CVX} fails to converge in a reasonable amount of time for larger values of $n$.  For the two convex cases (i) and (ii), we set \texttt{CVX} to its maximum allowed precision, \texttt{epsrel} $=$ \texttt{epsabs} $= 10^{-16}$.  We record all forward errors over $20$ trials.

Figure~\ref{fig:accuracy} shows that Algorithm~\ref{alg:Schatten}'s forward errors consistently hover around $10^{-15}$, the level of machine precision, irrespective of whether the problem is convex. For the two convex problems (i) and (ii), when we also have forward errors from \texttt{CVX}, it is clear that  Algorithm~\ref{alg:Schatten}'s accuracy far exceeds that of \texttt{CVX}, which fluctuates wildly from $10^{-7}$ to $10^3$ in case (i). Accuracy of \texttt{CVX}, while still lagging behind that of Algorithm~\ref{alg:Schatten}, shows an improvement in case (ii) because of the dimension-lowering effect of the constraint $FXG = H$.

\begin{figure}[htb]
    \minipage{0.49\textwidth}
        \includegraphics[trim={0.6em 1.4ex .5em 0.8ex}, clip, width=\textwidth]{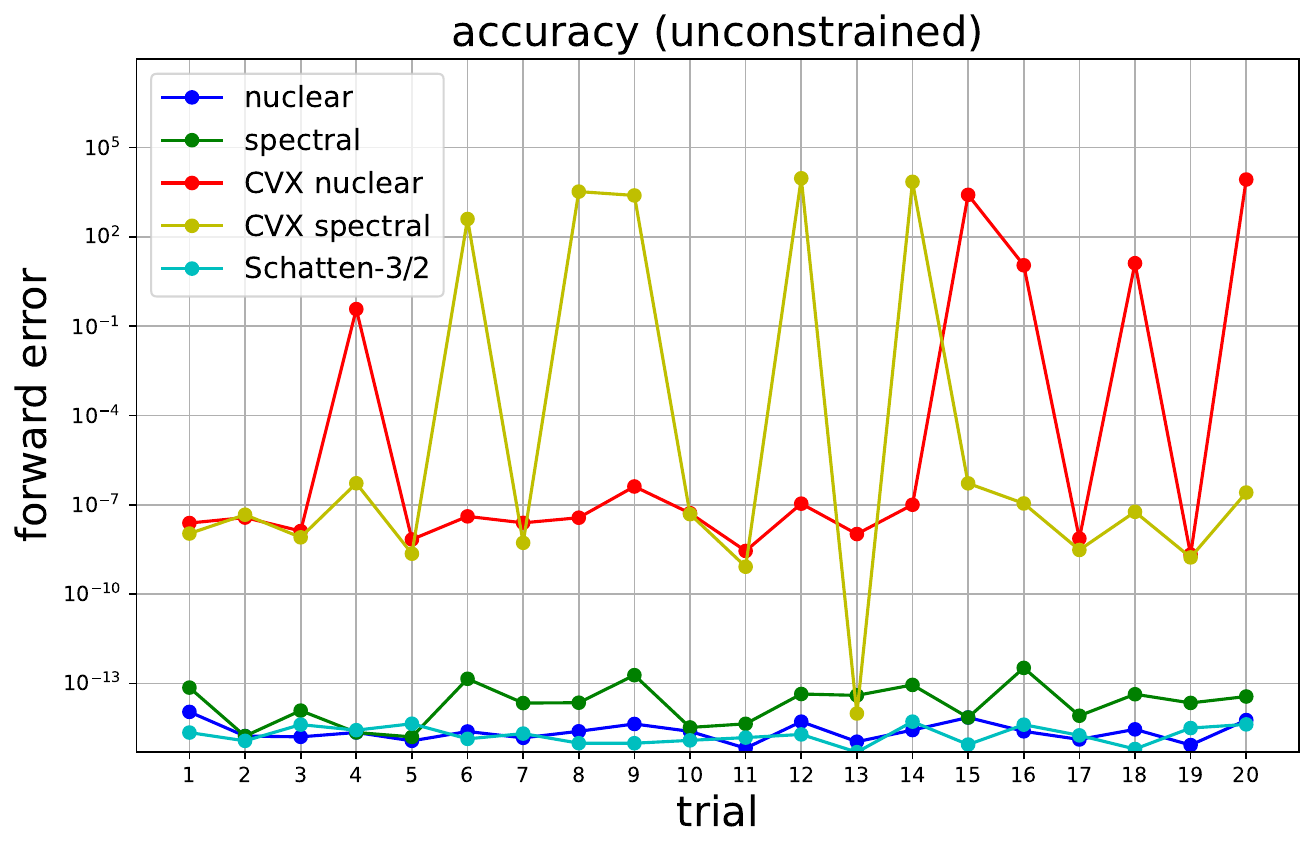}
    \endminipage\hfill
    \minipage{0.49\textwidth}
        \includegraphics[trim={0.6em 1.4ex .5em 0.8ex}, clip, width=\textwidth]{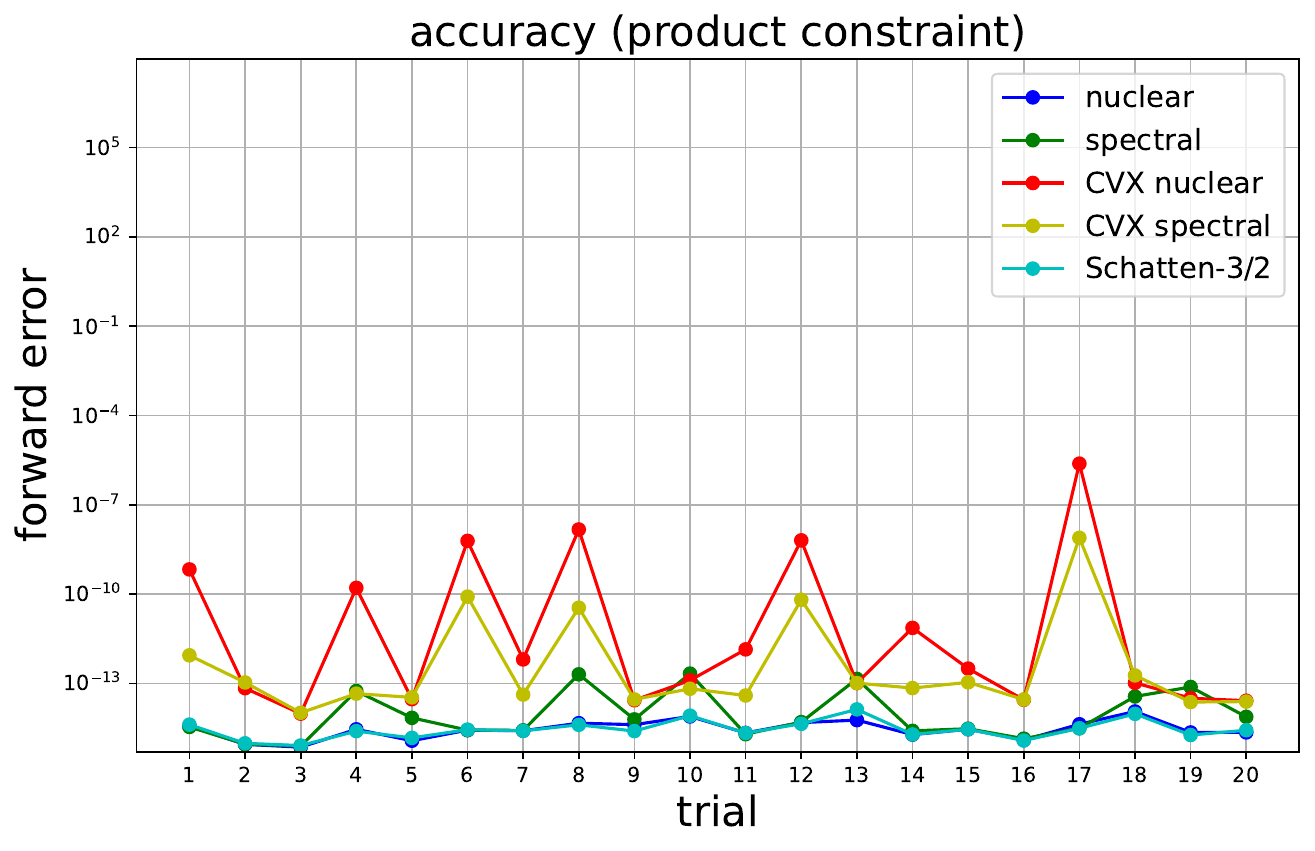}
    \endminipage\hfill
    \minipage{0.49\textwidth}
        \includegraphics[trim={0.6em 1.4ex .5em 0.8ex}, clip, width=\textwidth]{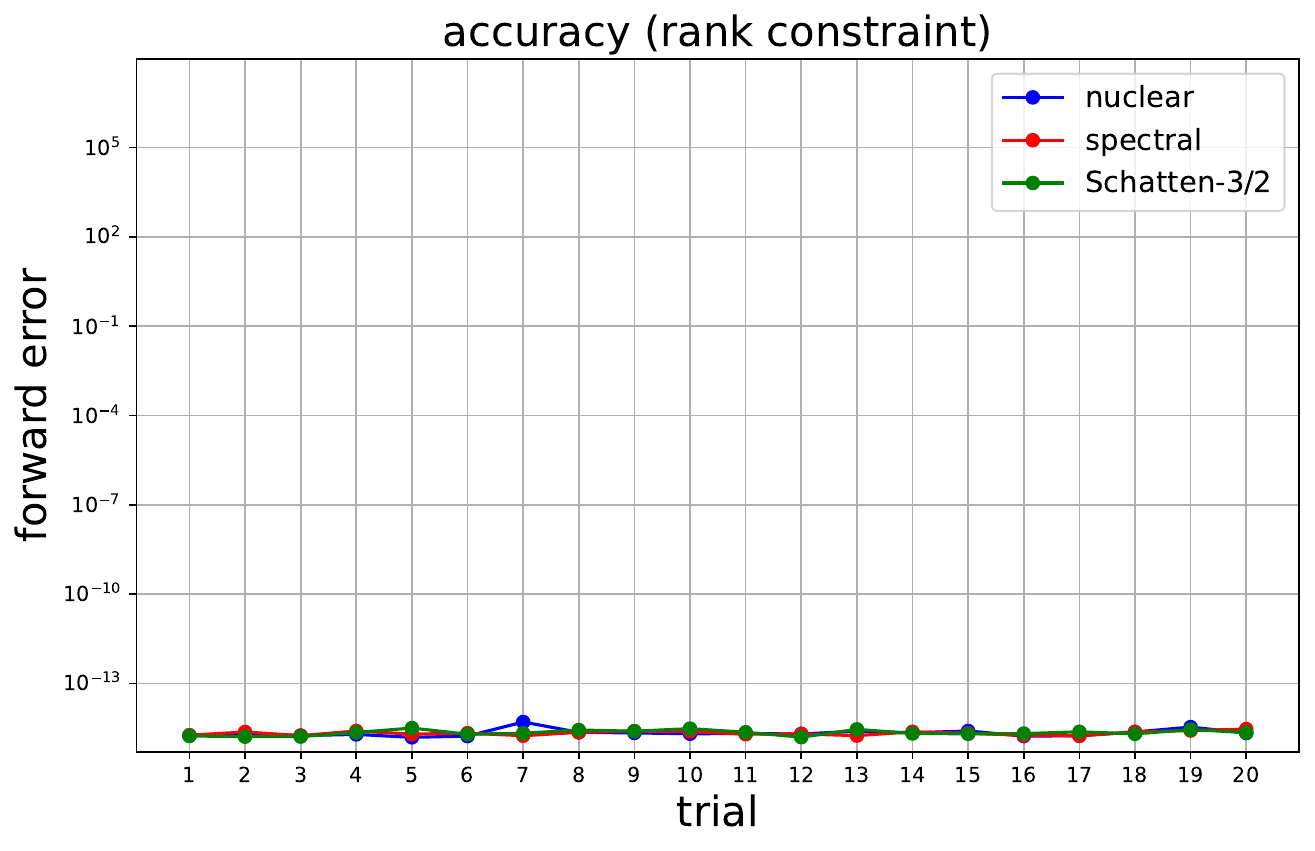}
    \endminipage\hfill
    \minipage{0.49\textwidth}
        \includegraphics[trim={0.6em 1.4ex .5em 0.8ex}, clip, width=\textwidth]{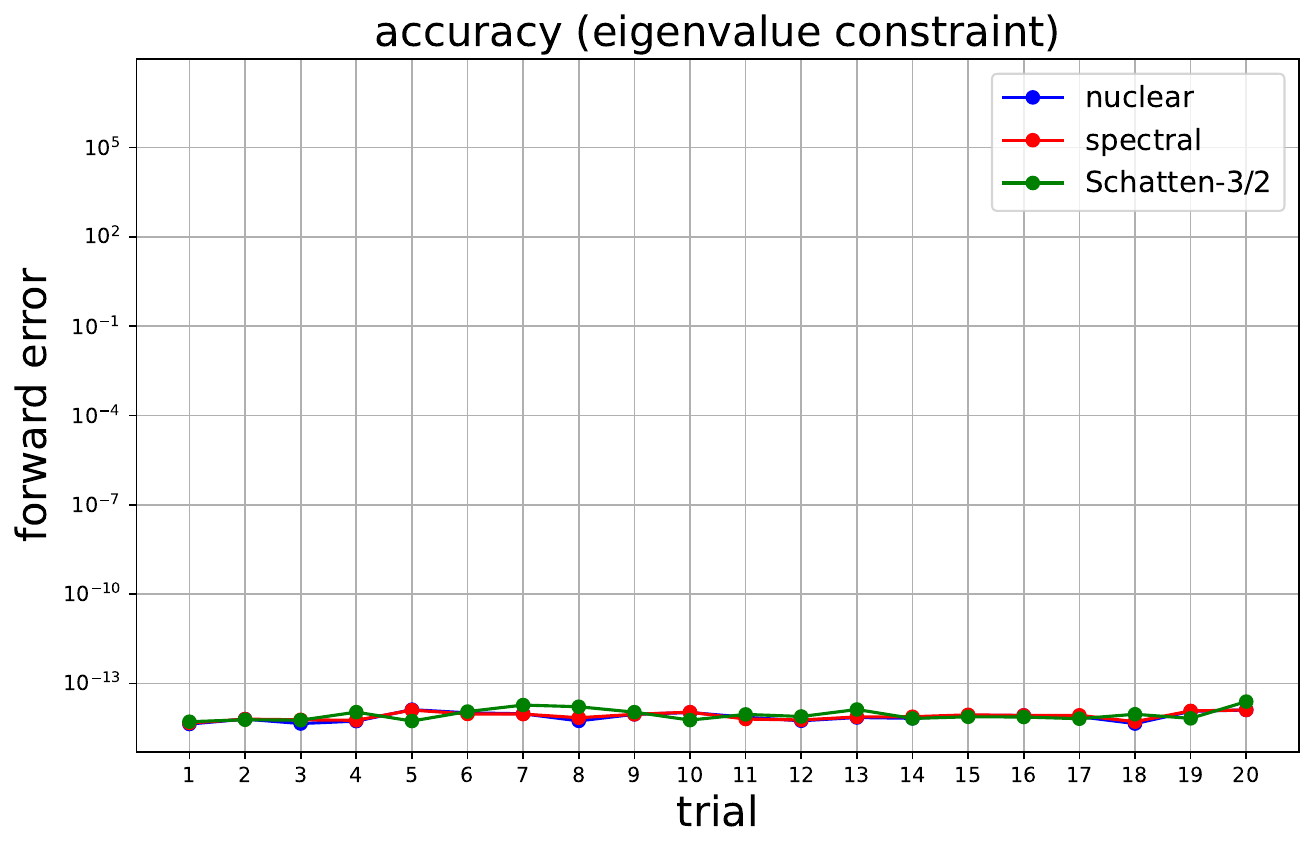}
    \endminipage\hfill
    \caption{Accuracy of Algorithm~\ref{alg:Schatten}. Again \texttt{CVX} comparison is unavailable for nonconvex or nonstandard problems.  We use the same vertical scale for easy comparison across different cases.}
    \label{fig:accuracy}
\end{figure}  
\FloatBarrier

    \subsection{Nuclear norm in system identification} 

Nuclear norm minimization arises naturally in system identification \cite{system,system2} for discrete-time linear time-invariant state-space model with inputs $u(t) \in \R^m$, outputs $x(t) \in \R$, $t = 0, \dots, 2n$. Define the input block Hankel matrix $A \in \R^{m(n+1) \times (n+1)}$ and the output Hankel matrix $X \in \R^{(n+1) \times (n+1)}$ and as follows:
\[
A = \begin{bmatrix}
    u(0) &u(1) &\cdots &u(n)\\
    u(1) &u(2) &\cdots &u(n+1)\\
    \vdots &\vdots &\ddots &\vdots\\
    u(n) &u(n+1) &\cdots &u(2n)
\end{bmatrix},
\quad 
X = \begin{bmatrix}
    x(0) &x(1) &\cdots &x(n)\\
    x(1) &x(2) &\cdots &x(n+1)\\
    \vdots &\vdots &\ddots &\vdots\\
    x(n) &x(n+1) &\cdots &x(2n)
\end{bmatrix}.
\]
Given a matrix $C \in \R^{(n+1) \times p}$ whose columns span the null space of $A$; a Hankel matrix $\widehat{X} \in \R^{(n+1) \times (n+1)}$ that represents the measured output; and a $\delta > 0$ representing noise tolerance. The system identification problem may be formulated as
\begin{equation}\label{eq:system}
\begin{tabular}{rl}
minimize  &  $\lVert XC \rVert_{\st,1}$ \\
subject to &   $\lVert \widehat{X} - X \rVert_\F \le \delta$, \\
&  $X \in \mathbb{R}^{(n+1) \times (n+1)}$ is Hankel.
\end{tabular}
\end{equation}
This formulation seeks to minimize the system order, quantified by $\lVert XC \rVert_{\st,1}$, while keeping the system consistent with measured data. Note that \eqref{eq:system} involves two different constraints: a norm constraint as well as a structure constraint. We will use this example to demonstrate the flexibility of Algorithm~\ref{alg:Schatten} in working with multiple constraints. Let
\[
n = 10, 20, \dots, 80, \quad p = \lceil 0.8n \rceil, \quad \delta = 0.5.
\]
For each $n$, we generate a full-rank $C \in \R^{(n+1) \times p}$ and a Hankel $\widehat{X} \in \R^{(n+1) \times (n+1)}$ randomly. We then solve \eqref{eq:system} with Algorithm~\ref{alg:Schatten} in the nuclear norm $\lVert \, \cdot \, \rVert_{\st,1}$ and compare our results with a solution using the \texttt{SCS} solver in \texttt{CVX}. The speed plot on the left of Figure~\ref{fig:nuclear_norm} appears to show that Algorithm~\ref{alg:Schatten} is only slightly faster than \texttt{CVX}. But this is an illusion --- in all but the $n = 10$ case, \texttt{CVX} gives an output $\widehat{X}$ that is not even feasible, never mind optimum. We will quantify the norm constraint violation as $\max\{0, \lVert \widehat{X} - X \rVert_\F - \delta\}$ and show this plot on the right of Figure~\ref{fig:nuclear_norm}. Evidently, \texttt{CVX} is unable to achieve feasibility at nearly every value of $n$, with worsening constraint violation as $n$ increases. On the other hand, Algorithm~\ref{alg:Schatten}, by its design, remains perfectly feasible.

\begin{figure}[htb]
    \minipage{0.49\textwidth}
        \includegraphics[trim={.6em 1.6ex .6em 1.2ex}, clip, width=\textwidth]{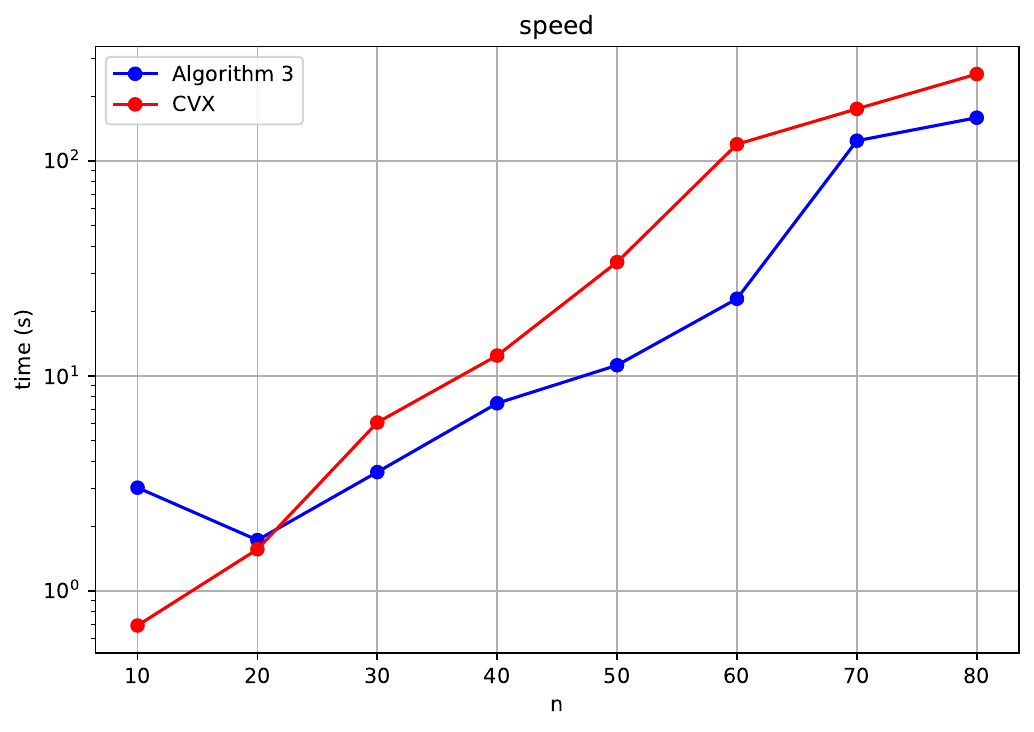}
    \endminipage\hfill
    \minipage{0.49\textwidth}
        \includegraphics[trim={.6em 1.7ex .6em 1.4ex}, clip, width=\textwidth]{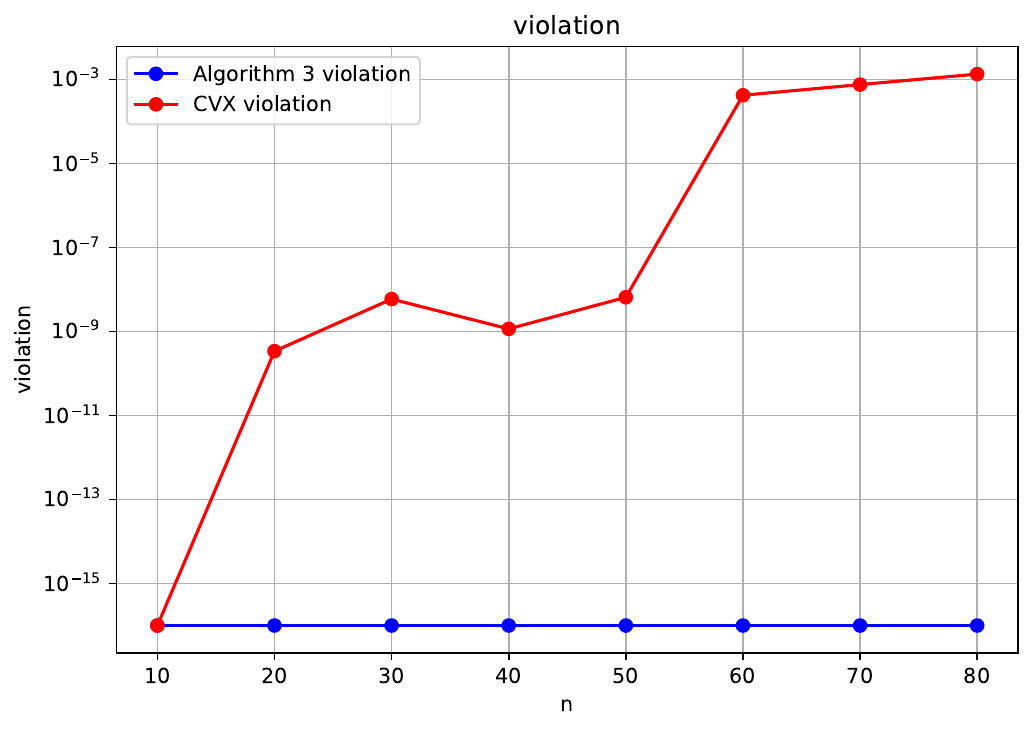}
    \endminipage\hfill
    \caption{Solving system identification problems with Algorithm~\ref{alg:Schatten} and with \texttt{CVX}. Right figure shows that \texttt{CVX} does not even converge to a feasible point.}
    \label{fig:nuclear_norm}
\end{figure}  
\FloatBarrier

\subsection{Spectral norm in CFAR detection}

In target detection problems, spectral norm naturally arises as a measure between the covariance matrix of cell under test (CUT) and the estimation matrix \cite{CFAR}. In scenarios when there is a constraint on the estimation matrix or when dimension-reduction techniques like principal component analysis have been applied, the estimation matrix must necessarily belong to $\{BXB^\tp : X \in \mathbb{S}^p_+\}$ for some $B \in \mathbb{R}^{n \times p}$.  We have the generalized matrix nearness problem
\begin{equation}\label{eq:CFAR}
    \min_{X \in \mathbb{S}^p_+} \; \lVert A - BXB^\tp \rVert_{\st,\infty},
\end{equation}
which captures dissimilarity between the covariance matrix $A \in \mathbb{S}^n_+$ in the cell under test and the mean covariance matrix of the reference cells $BXB^\tp \in \mathbb{S}^n_+$.

Recall that the positive semidefiniteness constraint is the one exceptional case in Theorem~\ref{thm:conv}, i.e., we do not have convergence guarantee for Algorithm~\ref{alg:Schatten} in this case. Indeed, a solution to \eqref{eq:CFAR} may fail to exist, as we discussed before Proposition~\ref{prop:attain}. Nevertheless, numerical evidence below shows that when a solution exists, Algorithm~\ref{alg:Schatten} will find it quickly.

Instead of taking $n=8$ as in \cite{CFAR}, we let $n = 10$, $15$, $20$, $25$ to provide a greater challenge for the two solvers. For each $n$, we generate an $A \in \mathbb{S}^{n}_+$ and a full-rank $B \in \mathbb{R}^{n \times p}$, $p = 1, \dots, n$. We then solve \eqref{eq:CFAR} with Algorithm~\ref{alg:Schatten}  and  with the \texttt{SCS} solver in \texttt{CVX}. Figure~\ref{fig:DP} shows that Algorithm~\ref{alg:Schatten} is consistently an order of magnitude faster than \texttt{CVX}.
\begin{figure}[htb]
    \centering
    \includegraphics[trim={.7em 1.6ex .7em 1ex}, clip, width=\textwidth]{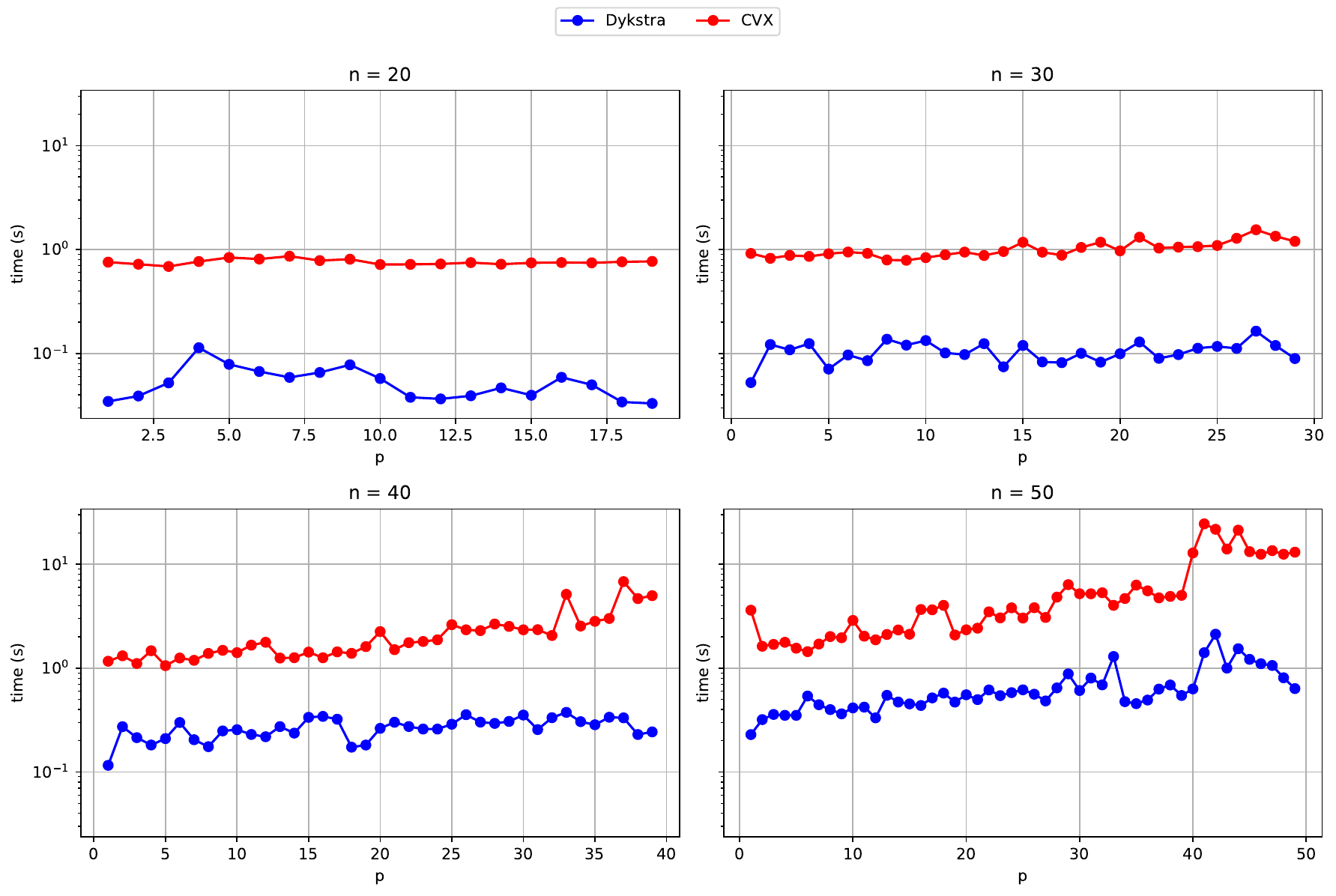}
    \caption{Solving target detection problems with Algorithm~\ref{alg:Schatten} and with \texttt{CVX}. Figures show that Algorithm~\ref{alg:Schatten} is an order of magnitude faster than \texttt{CVX}}
    \label{fig:DP}
\end{figure}  
\FloatBarrier

\section{Conclusion}

We expanded the study of generalized matrix nearness problems, deriving exact closed-form solutions in four new cases: affine objective, separable Kronecker product constraint, Kronecker rank constraint, and prescribed partial trace constraint. We showed that there is no result analogous to Mirsky Theorem for such problems. In part to address this, we proposed an iterative algorithm for generalized matrix nearness problems in any Schatten norm; the algorithm converges to a global minimizer from any starting point, and never computes a gradient or a subgradient explicitly.

As the numerical experiments show, our algorithm handles objectives like the Schatten $\frac32$-norm, which are convex and yet lie outside any standard convex solver's reach. For generalized nearness problems that do fall within the domain of such solvers, our algorithm is both faster and more accurate. This reaffirms a core premise alluded to in \cite[Section~7]{li22}: Despite the popularity of general purpose convex optimization solvers, an old-school numerical linear algebra approach allows one to exploit structures far richer than mere convexity. This yields superior computational performance, and in several cases also extends to nonconvex problems.

\subsection*{Acknowledgments} RW and LH are partially supported by a Vannevar Bush Faculty Fellowship ONR N000142312863. 

\bibliographystyle{abbrv}

\end{document}